\keywords{automaton group, Schreier graph, discrete Laplacian operator, spectrum, contracting action, fractal group}
\theoremstyle{plain} %\crefname{satz}{Satz}{S\"atze}
\def\eg{{\em e.g.}}
\begin{document}

\title[Instructions]{A Contracting Fractal Group,  Schreier Graphs and The Spectra}
\titlecomment{{\lsuper*}OPTIONAL comment concerning the title, \eg,
  if a variant or an extended abstract of the paper has appeared elsewhere.}

\author[B.Vaziri]{Bozorgmehr Vaziri}	%required
\address{Amirkabir University of Technology, Hafez-Ave, Tehran, Iran}	%required
\email{bozorgmehrvaziri@gmail.com}  %optional
%\thanks{thanks 1, optional.}	%optional

\author[F.Rahmati]{Frahad Rahmati}	%optional
\address{Amirkabir University of Technology, Hafez-Ave, Tehran, Iran}	%optional
\email{frahmati@aut.ac.ir}  %optional

%% etc.

%% required for running head on odd and even pages, use suitable
%% abbreviations in case of long titles and many authors:

%%%%%%%%%%%%%%%%%%%%%%%%%%%%%%%%%%%%%%%%%%%%%%%%%%%%%%%%%%%%%%%%%%%%%%%%%%%

%% the abstract has to PRECEDE the command \maketitle:
%% be sure not to issue the \maketitle command twice!

\begin{abstract}
This paper explores a previously uncharted automaton group generated by a 5-state automaton $(\Pi, A)$ acts by self-similarity on the regular rooted tree $A^{*}$ over a 2-letter alphabet set $A$. Group $G$ has been subjected to several observations that reveal the self-similar $G$-action possesses several notable characteristics: it is a weak branch, contracting, fractal group, and satisfies the open set condition with an exponential growth rate in activity. The corresponding Schreier graphs $\Gamma$ of the group action on the binary rooted tree, for $n\leq 12$, are presented. Furthermore, we delve into a numerical approximation of the spectrum of the discrete Laplacian operator, which is defined on the limit Schreier graph $\hat{\Gamma}=\lim_{n\rightarrow\infty} \Gamma_{n}$.
\end{abstract}

\maketitle

%% start the paper here:
\section{Introduction}\label{sec1}

After the first appearance of automaton groups by Aleshin \cite{aleshin1972finite} in 1960, the idea of self-similarity and fractalness entered geometry group theory and C-algebra through the automaton group, which provided theory of self-similar group algebra, fractal groups, and limit dynamical system $(\mathcal{J}_{G},s)$. An automaton group $G\leq Aut(A^{*})$ inherently is a subgroup of a $\vert A \vert$-regular rooted tree's automorphism, which generated by states of an automaton $\Pi=(Q, A)$ in a recursive manner and acts self-similarly on tree $A^{*}$. The automaton group received much attention after the innovation by R.I.Grigorchuk \cite{grigorchuk1980burnside} in 1980, who introduced a group generated by a finite-state automaton. Particullary, automaton groups has been used to investigating Burnside problem on periodic groups, and the word problem on finitely generated groups. The category of self-similar groups encompasses large classes of groups, like the class of infinite periodic finitely generated groups, groups of intermediate growth, just-infinite groups, and branch groups. V.Nekrashevich and L.Bartholdi \cite{bartholdi2008iterated} reveals a deep relation between the algebraic properties of the automaton group, the asymptotic geometry of the group, and the holomorphic dynamical systems. They introduced the iterated monodromy group $IMG(f)$, which can be associated with the dynamical system $(\mathcal{J},f)$ consisting of partial self-covering rational map 
$f:S\rightarrow S$ of Riemannian surface $S$ and its Julia set $\mathcal{J}=\mathcal{J}(f)$. The iterated monodromy group admits a self-similar action on the rooted tree $\mathcal{T}=\bigcup_{n>0} f^{-n}(x_{0})$, provided by the preimages of covering map $f$ and the arbitrary base point $x_{0}\in S$.
Moreover, Neckrashevich proved that when the rational map $f$ is expanding, $IMG(f)$ is a contracting self-similar group (for more  details, see \cite{nekrashevych2005self,nekrashevych2011iterated}). The author \cite{nekrashevych2009c,nekrashevych2004cuntz} involved the theory of automaton groups  $C^{*}$-algebra of bounded operators and harmonic analysis on fractals (in the latest direction, see \cite{bartholdi1999spectrum,nekrashevych2008groups} and \cite{bartholdi2000spectra}).

In the last few years, Authors \cite{grigorchuk2018spectra,grigorchuk2017combinatorics} tackled the spectral problem of the Laplacian operator on Schreier graphs of automaton groups by attaching an aperiodic ordered family of discrete Schrodinger operators $\lbrace H_{w} \rbrace_{w\in\Omega}$ to the dynamical system $(\Omega, \sigma)$ associated with the self-similar action of the first Grigorchuk group on boundary of binary rooted tree, i.e., $A^{-\mathbb{N}}$, where $\Omega$ is linearly repetitive subshift and $\sigma$ the shift operator, which can have significant implications in theory of quantum chaos on quasi-crystal mediums (see \cite{gnutzmann2006quantum}).

The spectral theory of self-similar groups $G$  has been significantly influenced by seminal works of R.Grigorchuk, L.Bartholdi, and D.Zuk \cite{grigorchuk2006self,bartholdi2000spectra,bartholdi1999spectrum}. They have used the inherent self-similarity of the action provided an operator recursion, known as Schur complement, that facilitated the spectrum computation of non-commutative dynamical systems. Moreover, \cite{bartholdi1999parabolic} investigated Heck algebra and Heck-type operators in non-commutative dynamical systems. In this direction, one of the crucial implications that have been proved by Grigorchuk and Zuk \cite{grigorchuk1999asymptotic} is the continuity of the spectral density function on space of locally finite marked graphs accompanied by Hausdorff-Gromov
metric becomes a metric space.
More persicely; for a sequence of marked graphs $\lbrace (\Gamma_{n} , x_{n} )\rbrace$, they showed
the weak convergence of spectral measures associated with Markov operator to a spectral measure on the limit marked graph $(\hat{\Gamma}, \hat{x})$ on the metric space of regular marked graphs .
This provide a straightforward method for approximating the Laplacian's spectrum on the limit space $\mathcal{J}_{G}$ associated with the self-similar group $(G, A)$ in sence of V.Nekrashevich \cite{nekrashevych2005self}. 

According to the results provided by authors \cite{grigorchuk1999asymptotic}, some numerical approximation of the operator's spectrum $\Delta_{\rho}=D - M_{\rho}$ is given in the present research, which is known as discrete Laplacian operator (Markov operator $M_{\rho}$) on the limit Schreier graph $\hat{\Gamma}\sim\mathcal{J}_{G}$
associated with the automaton group $G$ generated by the automaton \ref{Moore diagram} with the generating set $S=\lbrace a, b, c, d\rbrace$;
\begin{center}
$\Delta_{\rho}:= D-\sum_{s\in S}\rho (s + s^{-1}) $.
\end{center}
Where $\rho(s + s^{-1})$  is a bounded self-adjoint operator for all $s\in S$,
and $\rho :\mathbb{C}G\rightarrow \mathcal{B}( L^{2}(A^{-\mathbb{N}}, \nu)))$ is the lift of the left-regular representation of the group $G$ to group algebra $\mathbb{C}G$, and $D$ is multiplication by $deg(\Gamma)$ operator, i.e.,  $D f =deg(\hat{\Gamma}) f$.

Nevertheless, computing the Laplacians spectrum corresponding to automaton groups is still challenging in a general case; sometimes, the condition of the Schur complement method needs to be provided, as in our case (see \ref{The spectrum of Laplacian}). There is no general algorithmic method to compute the spectrum of Laplacian operators, and only a few known automata group spectra are calculated, which is reviewed in \ref{The spectrum of Laplacian}.

Let $G$ be an automaton group based on automaton $\Pi (S , A)$ over finite alphabet set $A$; $G=\langle S \rangle$. The algebraic structures (relations, lattice of normal subgroups ,etc)  of the group identifys by its self-similar action on the $\vert A \vert = q$-reguar (in general case, spherically homogeneous) rooted tree $\mathcal{T}= A^{*}$.
 Authors \cite{nekrashevych2005self}\cite{bartholdi2006automata} showed any automaton $\Pi (S , A, \nu, \Gamma)$ establishes $G$-invariant asymptotic relation $\sim_{\Pi} $ on the $A^{-\omega}\times G$, and the quotient of $A^{-\mathbb{N}}\times G$ modulus of the asymptotic-relation $\sim_{\Pi} $ denoted by $\mathcal{J}_{\Pi}:=  A^{-\mathbb{N}}\times G / \sim_{\Pi} $ is called the limit space associated with self-similar structure $(G, A)$. The limit space $\mathcal{J}_{\Pi}$ is the set of $\sim_{\Pi} $- equivalence classes of the left (right)-infinite words $\cdots x_{2} x_{1}$ with letters $x_{i} \in A$ which are in bijection with left-infinit paths in Moore graph of the automaton'nucleus.
 Therefore the asymptotic dynamics of the group (for example, the group's growth, linear activity of the group action, Hausdorff dimension) are reflected by the Moore graph of the nucleus.
 Nekrashevich (\cite{nekrashevych2005self} lemma 3.1.2) showed that when $G$ is a contracting automaton group, i.e., $G$ has a finite nucleus $\mathcal{N}$ that generates the group ($G=\langle\mathcal{N}\rangle$), the asymptotic equivalence is a closed $G$-invariant relation; therefore, the quotient $\mathcal{J}_{G}$ is a compact Hausdorff space is equipped with the right-shift operator $\sigma (\cdots x_{2} x_{1}) =\cdots x_{3} x_{2}$ , provides a dynamical system $(\mathcal{J},\sigma)$ which is the content of the modern theory of dynamical systems and fractal geometry.
\\
The present research is concentrated on group $G$, which is generated by states of 5-state automaton $\Pi$ over a 2-letter alphabet $A=\lbrace 0,1 \rbrace$; its Moore graph is shown in Figure \ref{Moore diagram}.\\

The following wreath recursion defines the automaton:
\begin{center}
$a=\sigma (d ,1) , b=(a,c) , c=(a,a) , d=(1,b) $\label{wreath recurssion} , $\Pi:=(\lbrace 1, a,b,c,d \rbrace , \lbrace 0, 1 \rbrace)$ \\
$G:=\langle a^{\pm 1}, b^{\pm 1}, c^{\pm 1}, d^{\pm 1} \rangle$
\end{center}
In sequle the primery automaton group $G$ has been subjected to several observations:
\begin{itemize}
\item[1- ]The self-similar action of group $G$ on the binary tree $A^{*}$ and its boundary $A^{-\mathbb{N}}$ is investigated through theorem \ref{main theorem}.
\item[2- ] 
The associated Schreier graphs $\Gamma_{n}(G, S , A^{n})$ up to level $12$-th and The Moore diagram of the nucleus is demonstrated in Figures \ref{The Schreier graph of automaton}, \ref{The Schreier graph 12}, and \ref{The nucleus's Moore diagram}. Obviously, each Schreier graph $\Gamma_{n}$ gives an approximation of $\hat{\Gamma}$. The Hausdorff-Gromov distance of the Schreier graph $\Gamma_{n}$ and the limit Schreier graph $\hat{\Gamma}$ tends to zero when $n\rightarrow\infty$.
\item[3- ]
By relying on results, which have been provided by R.Grigorchuk and A.Zuk \cite{grigorchuk1999asymptotic}, a numerical approximation of the spectrum and some eigenvectors (see \ref{smallest eigenvalues}) of the asymptotic Laplacian operator $\hat{\Delta}$ as a bounded
operator on space of $\mathbb{C}$-valued bounded function with compact support on the limit Schreier graph $\hat{\Gamma}=\lim_{n\rightarrow\infty} \Gamma_{n}$ is provided. The spectrum of the Laplacian operator $\Delta_{n}=D_{n}- M_{n}$, where $D_{n}$ and $M_{n}$ respectively are diagonal and adjacency $\vert A \vert^{n}\times \vert A \vert^{n}$-matrix on the $n$-th level of the rooted tree $A^{*}$, is computed up to level 12, which in turn gives the spectra approximation.

\item[4- ] Level-stabilizer, rigid-stabilizer subgroup corresponding to the first few levels of the binary tree, the portraits of some element of level-stabilizer subgroups,  and group's relations with a restricted length are computed \footnote{In this research, all computation of the automata group is conducted by a computer program based on Python written by the first author.
}.  
\end{itemize}

The following theorem indicates some basic properties of the self-similar action of group $\mathcal{G}$ on the binary rooted tree $A^{*}$:

\begin{thm}\label{main theorem}
Let $\mathcal{G}$ be self-similar group generated by the automaton $\Pi=\langle a,b,c,d \rangle$
which acts on the binary rooted tree $\mathcal{T}=\lbrace 0,1 \rbrace^{\mathbb{N}}$. Then $G$ satisfies the following:
\begin{itemize}
\item[1- ]
Group $G$ acts by contracting on the binary tree, i.e., there exists a minimal finite non-empty set (nucleus) $\mathcal{N}$ with property defined on  definition \ref{Nucleus}.
\item[2- ]
Group $G$ is a fractal group, i.e., $G$ is self-replicating and level transitive on $\mathcal{T}$ (definition \ref{self-replicating}).
\item[3- ]
The action of automaton $\Pi$ on the binary tree satisfies the open set condition (definition \ref{open set condition}).

\item[4- ] The automaton $\Pi$ has an exponential growth of activity.
\item[5- ] The automaton group $G$ is a weakly branch group.
\end{itemize}
\end{thm}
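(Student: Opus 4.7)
The plan is to verify each of the five properties by direct computation from the wreath recursion $a=\sigma(d,1)$, $b=(a,c)$, $c=(a,a)$, $d=(1,b)$ and to interpret the outcomes in the usual framework for self-similar groups.

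For (1) I would compute the nucleus algorithmically: start with $\mathcal{N}_{0}=S\cup S^{-1}\cup\{1\}$, iteratively form $\mathcal{N}_{k+1}=\mathcal{N}_{k}\cup\{\,g|_{v}:g\in\mathcal{N}_{k},\ v\in A\,\}$ after reducing every new element to a chosen normal form, and stop once $\mathcal{N}_{k+1}=\mathcal{N}_{k}$. Finiteness of this stable set is exactly the contracting property, and the resulting Moore diagram of $\mathcal{N}$ is the combinatorial object on which the remaining parts depend.

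For (2) self-similarity is built into the recursion, so only self-replication and level-transitivity need attention. For self-replication I would exhibit, for each generator $s\in\{a,b,c,d\}$ and each $x\in\{0,1\}$, an element of $\mathrm{Stab}_{G}(x)$ whose section at $x$ is $s$. Short wreath computations suffice: for instance $b|_{0}=a$; $a^{2}=(d,d)$ gives $d$ as a section; and one checks $aba^{-1}=(c,dad^{-1})$ and $ada^{-1}=(b,1)$, which realise $c$ and $b$ at position $0$. Symmetric identities at position $1$ use $c|_{1}=a$, $d|_{1}=b$, $b|_{1}=c$, and $a^{2}|_{1}=d$. Level-transitivity then follows by induction on the level: $a$ swaps the two subtrees at level $1$, and self-replication lets the inductive hypothesis descend into each subtree independently.

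Parts (3) and (4) are decided by the Moore diagram produced in (1). For the open set condition I would translate it into disjointness of the images of the cylinder sets under the section map on the nucleus and verify it by inspection of that diagram. For exponential activity I would analyse the labelled adjacency matrix of the non-trivial part of the Moore diagram; any cycle at a state whose $A$-branching carries a non-trivial label forces the number of non-trivial sections at depth $n$ to grow at least geometrically in $n$, and the spectral radius of this matrix supplies the exponential rate. For (5), the identity $ada^{-1}=(b,1)$ already exhibits a non-trivial element of $\mathrm{Rist}_{G}(0)$, so weak branching at level $1$ is immediate; level-transitivity from (2) conjugates it to every vertex of level $1$, and induction using self-replication extends the conclusion to every level.

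The main obstacle I expect is step (1). Contraction is an \emph{a priori} infinite condition, and for a $5$-state binary automaton the nucleus can be large enough that a naive enumeration risks either missing an element of the stable set or mistaking two words that represent the same group element for distinct elements. Once $\mathcal{N}$ is certified to be closed under taking sections, the remaining parts reduce to finite checks on the Moore diagram combined with the short wreath identities above; I would rely on the computational framework mentioned in the introduction to produce and certify the nucleus.
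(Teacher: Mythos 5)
The central gap is in your treatment of part (1). Closing $S\cup S^{-1}\cup\lbrace 1\rbrace$ under sections terminates immediately for \emph{any} finite invertible automaton, because the sections of the generators are again states of the automaton; so the finiteness of your stable set is automatic and certifies nothing (the lamplighter automaton passes this test, yet the action is not contracting). Contraction is a statement about sections of arbitrary \emph{products} of generators, and any finite certificate must involve a product step. This is exactly what the paper's Lemma \ref{lemma} supplies: the authors exhibit a symmetric $67$-element candidate $\mathcal{N}$ containing words of length up to $4$ (such as $a^{-1}d^{-1}bd$), which your iteration would never produce, check that $\mathcal{N}$ is section-closed via its Moore diagram, and then verify by direct computation that $((S\cup S^{-1})\otimes\mathcal{N})\vert_{A^{7}}\subseteq\mathcal{N}$. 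Without enlarging your candidate set by taking sections of products $s\cdot n$ with $s\in S\cup S^{-1}$, $n\in\mathcal{N}_k$, and iterating until that condition stabilizes, your procedure would output only $\lbrace 1,a^{\pm1},b^{\pm1},c^{\pm1},d^{\pm1}\rbrace$ and wrongly declare the action contracting with that nucleus.

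Two further steps do not hold as stated. For (4), a single cycle in the nontrivial part of the Moore diagram does \emph{not} force exponential activity: the adding machine has a loop at its active state and bounded activity. The correct criterion, and the one the paper uses, is the existence of two distinct cycles through a common nontrivial state (equivalently, spectral radius $>1$ of the adjacency matrix you mention); here these are $b\rightarrow a\rightarrow d\rightarrow b$ and $b\rightarrow c\rightarrow a\rightarrow d\rightarrow b$. For (5), surjectivity of $\psi_{x}:St_{G}(x)\rightarrow G$ lets you prescribe the section at $x$ but gives no control over the sections at the other first-level vertices, so lifting a nontrivial element of $\mathrm{Rigid}_{G}(w)$ through $\psi_{x}^{-1}$ need not land in $\mathrm{Rigid}_{G}(xw)$; your induction therefore does not close. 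The paper does the actual work by exhibiting an explicit family, proving $1\neq[a,c^{-2^{k}}]\in \mathrm{Rigid}_{G}((0111)^{k})$ by induction on $k$, which together with level-transitivity yields nontrivial rigid stabilizers at every level. Parts (2) and (3) of your plan coincide with the paper's argument (the same conjugation identities appear in Lemma \ref{fractal lemma}) and are fine.
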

The requairments of the proof of theorem \ref{main theorem} is provided during section \ref{Self-similar action of}
\\
Section \ref{section one} defines and gives preliminary results related to automaton groups with self-similar action on a regular rooted tree. Section \ref{Self-similar action of} explores
the characteristic of the group's self-similar action on the binary tree, and the proof of the main theorem \ref{main theorem} is completed.
The Schreier graph $\Gamma(G, A^{n})$ on different levels of the rooted tree is presented in section \ref{Schreier graphs and the spectrum}. Furthermore, subsection \ref{Dynamical system and Heck operator} provides requirements for harmonic analysis on dynamical system $(\mathcal{J}_{G}, \sigma)$. Subsection \ref{The spectrum of Laplacian} tried to review the latest self-similar group’s spectra computation and further a numerical solution to the eigenvalue problem of the Laplacian operator $\Delta_{n}=I-M_{n}$ corresponding to main group $G:=\langle a^{\pm 1}, b^{\pm 1}, c^{\pm 1}, d^{\pm 1} \rangle$. In the last section, 4, The group’s relations are computed with a length less than 12, rigid-stabilizer subgroups and level-stabilizer subgroups for the first few levels are determined, and their associated portrait is presented.

\begin{wrapfigure}{r}{0.5\textwidth}
\begin{tikzpicture} [shorten >=1pt,node distance=2cm,auto]
  \tikzstyle{every state}=[fill={rgb:black,1;white,10}]

  \node[state,accepting]   (s)      {$1$};
  \node[state] (d) [left of=s]  {$d$};
  \node[state]           (b) [left of=d]     {$b$};
  \node[state] (a) [below  of=d] {$a$};
  \node[state]           (c) [left of=a]     {$c$};

  \path[->]
  (d)   edge              node {$0\vert 0$} (s)
  (a)      edge  [right, bend right]  node {$1\vert 0$} (s)
   (a)      edge  [right]            node { $0\vert 1$} (d)
    (c)      edge [below]   node {$0\vert 0$,\\$1\vert 1$} (a)
    (d)      edge  [above]  node {$1\vert 1$} (b)
    (b)      edge              node {$0\vert 0$} (a)
    (b)      edge   [left]    node {$1\vert 1$} (c)
  (s) edge [ loop right, above] node {$0\vert 0  , 1\vert 1$} (s);
\end{tikzpicture}

\caption{The Moore diagram of the automaton $\Pi=(\lbrace 1, a,b,c,d \rbrace , \lbrace 0, 1 \rbrace)$ }
    \label{Moore diagram}
\end{wrapfigure}

\section{Preliminary}\label{section one}
\vskip 0.4 true cm
This section reviews the definitions and theorems related to the self-similar groups generated by closed finite-state automata, their Schreier graphs, and the limit space.We have left theorems and propositions without proof, guiding the reader to the references mentioned.
The main resources that are used in this section are \cite{bartholdi2003fractal}\cite{bartholdi2006automata}\cite{nekrashevych2005self}\cite{nekrashevych2008groups}.\\

\begin{defi}
 The \textit{automaton} $\Pi$ is a tuple $(Q, A,\tau,\nu)$ where $Q$ is the set of states  and $A$ alphabet set associated with a \textit{transition} map $\tau : Q\times A \rightarrow  Q$ and \textit{output} map  $\nu : Q\times A \rightarrow  A$ obey the inductive rules: 
\begin{center}
$q\vert_{\emptyset} = q$    , $q(\emptyset) = \emptyset$  , \\
$q\vert_{x v}= q\vert_{x}\vert_{v}$   ,     $q(x v) = q(x)q\vert_{x}(v)$, 
\end{center}
for $x\in A$ , $v\in A^{*}$ and $q\in Q$ and $q\vert_{x}=\tau(q , x) , q(x)=\nu(q,x)$. 

By the composition low of automata , define the automaton $\Pi^{2}$ over alphabet set $A$ and state set $Q^{2}=Q\times Q$ such that
\begin{center}
$(q_{1}q_{2} , v)\mapsto q_{2}\vert_{q_{1}(v)}q_{1}\vert_{v}$ ,\\    $(q_{1}q_{2} , v)\mapsto q_{1}(q_{2}(v))$
\end{center}
for $v\in A^{*}$ and $q_{1} , q_{2} \in Q$.
However, by induction, one can obtain automaton $\Pi^{*}$
over alphabet set $A$ with the state set $Q^{*}=\bigsqcup_{\mathbb{N}}Q$. The states $Q^{*}$
of the automaton $\Pi^{*}$ with an empty set produce semi-group $G=\langle \Pi^{*} \rangle$.
\end{defi}
Note that the transition $\tau$ and output $\nu$ map can be extented to continuous functions $\hat{\tau}: Q\times A^{\omega}\rightarrow Q$ and $\hat{\nu}:Q\times A^{\omega}\rightarrow A^{\omega}$ with respect to to the pro-discrete topology.

The automaton $\Pi$ is \textit{invertible} if, for all $q\in \Pi$, the transformation $\tau (q, . ) : A \rightarrow A$ is inevitable; equivalently, $q$ acts by permutation on $A$. Consequently, the states of an invertible automaton $\Pi=\Pi^{-1}$ generate a group called the \textit{automaton group} denoted by $G=\langle \Pi \rangle$, which is a subgroup $G\leq Aut(A^{*})$ of the automorphism group of the rooted tree. An automaton $\Pi$ can be represented by its \textit{Moore diagram}(for example, figure \ref{Moore diagram}), a directed labeled graph whose vertices are identified with the states of $\Pi$. For every state $q\in \Pi$ and every letter $x \in A$, the diagram has an arrow from $q$ to $q\vert_{x}$ labeled by $x\vert q(x)$. This graph contains complete information about the automaton, and we will identify it with its Moore diagram.

 A faithful action of a group $G$ on $A^{*}$ (or on $A^{\omega}$) is said to be \textit{self-similar} if for every $g \in G$ and every $x \in A$, there exist $h \in G$ and $y \in A$ such that
\begin{center}
$g(x w) = y h(w)$
\end{center}
for every $w \in A^{*}$ (resp. $w \in A^{\omega}$). 
The pair $(h,y)$ is uniquely determined by the pair $(g,x)$, since the action is faithful. Hence we get an automaton with G as a set of states and with the following output and transition functions: 
\begin{center}
$g.x = y.h$
\end{center}
i.e., $y = g(x)$ and $h = g\vert_{x}$. This automaton is called the \textit{complete automaton} of the self-similar action

In other words, the faithful action of group $G$ on $A^{*}$ is self-similar if there exists an automaton, denoted by $(G, A)$, such that for all $g \in G$, the action of $g$ as a group element on the regular rooted tree $A^{*}$ coincides with the action of $g$ as a state of the automaton $(G, A)$ on the tree.

\begin{defi}\label{Nucleus} \cite{nekrashevych2005self}
A self-similar action $(G, A)$ is called \textit{contarcting} (or \textit{hyperbolic}) if ther exists a finite set $\mathcal{N}\subset G$ such that for every $g\in G$ there exists $k\in \mathbb{N}$ such that $g\vert_{v}\in \mathcal{N}$ for all words $v\in A^{*}$ of lenght$\geq k$. th minimal set with this property is called the \textit{nucleus} of the sel-similar action.
The nucleus of contracting action is unique and is defiend by:
\begin{center}
$\mathcal{N}:=\bigcup_{g\in G}\bigcap_{n\geq 0}\lbrace g\vert_{v} ; v\in A^{*} , \vert v \vert\geq n\rbrace$
\end{center}
if $g\vert_{v}= g$ for some $v\in A^{*}\setminus\emptyset$, then $g$ belongs to the nuclues by definition.
\end{defi}
\begin{lem}\label{lemma}\cite{nekrashevych2005self}
A sefl-similar action of a group with a generating set $S=S^{-1}$, $1\in S$ is contracting if and only if there exists a finite set $\mathcal{N}$ and a number $k\in \mathbb{N}$ such that 
\begin{center}
$(S\cup \mathcal{N})^{2}\vert_{A^{k}}\subseteq \mathcal{N}$
\end{center}
\end{lem}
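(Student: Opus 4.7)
The plan is to verify the five items by exploiting the wreath recursion $a=\sigma(d,1)$, $b=(a,c)$, $c=(a,a)$, $d=(1,b)$, combined with a finite enumeration of sections up to a small tree depth (as in the paper's Python implementation). Throughout, write $S=\{1,a^{\pm 1},b^{\pm 1},c^{\pm 1},d^{\pm 1}\}$ for the symmetric generating set. For (1), I would apply Lemma \ref{lemma}: it suffices to exhibit a finite $\mathcal{N}$ and $k\in\mathbb{N}$ with $(S\cup\mathcal{N})^{2}|_{A^{k}}\subseteq \mathcal{N}$. I would compute the sections $(gh)|_{v}$ for all pairs $g,h\in S$ and $v\in A^{k}$ with growing $k$; since, for instance, $a^{2}=(d,d)$ already has very short sections and the generators themselves have sections in $\{1,a,b,c,d\}$, I expect that by depth $k=2$ or $3$ the collection of sections stabilises inside a small finite set, which is then the nucleus $\mathcal{N}$.

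For (2), self-replication is immediate from the recursion: $b,c,d$ all lie in $\mathrm{Stab}_{G}(A)$, and the sections $b|_{0}=a$, $b|_{1}=c$, $c|_{0}=c|_{1}=a$, $d|_{0}=1$, $d|_{1}=b$ collectively recover every generator of $G$, so the projection $\mathrm{Stab}_{G}(v)\to G$, $g\mapsto g|_{v}$, is onto for $v\in A$; iteration extends this to every vertex. Level-transitivity then follows by induction on $n$, using that $a$ swaps the two top subtrees and self-replication handles each subtree inductively. For (3), I would identify the two-tile decomposition $T=T_{0}\cup T_{1}$ of the limit space $\mathcal{J}_{G}$ produced by the inverse branches of the shift, and verify that the nucleus computed in (1) identifies only boundary points of tiles, so that the interiors are disjoint; this is exactly the open set condition of Definition \ref{open set condition} in Nekrashevych's framework.

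For (4), activity at level $n$ counts the sections $g|_{v}$ with $|v|=n$ whose action on $A$ is non-trivial; since $a$ is the sole active generator and chains with $d=(1,b)$ and $b=(a,c)$, iterating $a$ produces active sections at a positive density of deep vertices. A compact certificate is a non-negative integer recursion matrix tracking how many active sections each state of the nucleus produces at the next level, whose spectral radius one then checks to be strictly greater than $1$. For (5), I would produce a non-trivial element of $\mathrm{rist}_{G}(1)$ directly from the recursion: one computes $[b,d]=(1,[c,b])$, which is non-trivial and supported on the subtree at $1$, hence lies in $\mathrm{rist}_{G}(1)$; conjugating by $a$ (which swaps the two top subtrees) yields a non-trivial element of $\mathrm{rist}_{G}(0)$, and applying self-replication from (2) distributes such rigid-stabiliser elements to every tree vertex, proving $G$ is weakly branch.

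The hardest steps will be (1), the termination and stabilisation of the iterated section enumeration, which is finite but a priori unbounded in $k$, and (3), since the open set condition requires a concrete geometric handle on the limit tiles that is not transparent from the wreath recursion alone. Both will ultimately rest on the finite, computer-assisted verification referenced in the paper's footnote; once (1) pins down the nucleus, the remaining algebraic items (2), (4), and (5) reduce to short checks against the recursion.
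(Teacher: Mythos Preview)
Your proposal does not address the stated lemma at all. Lemma~\ref{lemma} is a \emph{general} criterion, quoted from Nekrashevych's book, characterising when an arbitrary self-similar action of a group with symmetric generating set $S$ is contracting: namely, if and only if some finite $\mathcal{N}$ and some $k$ satisfy $(S\cup\mathcal{N})^{2}|_{A^{k}}\subseteq\mathcal{N}$. The paper does not prove this lemma; it is a preliminary result cited without proof (as the paper explicitly announces for Section~\ref{section one}). A proof would have to argue abstractly: one direction shows that the inclusion condition forces every section of every word in $S^{*}$ to eventually land in $\mathcal{N}$ (by induction on word length, splitting off one generator at a time); the other direction shows that if the action is contracting with nucleus $\mathcal{N}_{0}$, then enlarging $\mathcal{N}_{0}$ by finitely many sections yields a set satisfying the inclusion for some $k$.

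What you have written is instead a sketch of Theorem~\ref{main theorem}, the paper's main result about the \emph{specific} group $G=\langle a,b,c,d\rangle$. Even read as such, several of your estimates and strategies diverge from the paper's. For item~(1) you predict stabilisation by depth $k=2$ or $3$ inside a ``small'' nucleus; the paper's actual nucleus has $67$ elements and requires $n_{0}=7$ (Lemma~\ref{contracting lemma} and the Appendix). For item~(3) you invoke tile interiors, whereas the paper uses the equivalent formulation in Definition~\ref{open set condition}: simply exhibit, for each nucleus element $g$, a word $v$ with $g|_{v}=1$ (Lemma~\ref{openset lemma}). For item~(4) the paper does not compute a spectral radius but just observes two intersecting cycles in the Moore diagram. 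For item~(5) the paper does not use $[b,d]$ but the family $[a,c^{-2^{k}}]$, showing by induction that it lies in $\mathrm{Rigid}((0111)^{k})$.
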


\begin{defi}\label{self-replicating}\cite{bartholdi2003fractal}
A self-similar action is said to be \textit{self-replacing} (\textit{recurrent} or
\textit{fractal}) if it is transitive on the first level $A$ of the tree $ A^{*}$ and the map
\begin{center}
$\psi_{x} : St_{G}(1)\rightarrow G$ , $g\mapsto g\vert_{x}$
\end{center}
is surjective for some (then for every) letter $x \in A$. It can be shown that a self-replicating group acts transitively on $A^{n}$ for every $n \geq 1$. Nekrashevich proved (\cite{nekrashevych2005self}, Proposition 2.11.3) that if a finitely generated contracting group is self-replicating, then its nucleus $\mathcal{N}$ is a generating set $\langle\mathcal{N}\rangle=G$.

\end{defi}\label{open set condition}
\begin{defi}\cite{bartholdi2006automata}
It is said that a contracting action satisfies the \textit{open set condition} if every element of its nucleus has a trivial restriction. It is easy to see that this is equivalent to the condition that every group element $g\in G$ has a trivial restriction.
\begin{center}
$\forall g\in G, \exists v\in\mathcal{T} ;  \psi_{v}(g)=g\vert_{v}=1$
\end{center}
\end{defi}
\begin{defi} 
Let $G \leq Aut(A^{*})$ be a level-transitive group.
If all rigid stabilizers $RiStG(n)$ are infinite (equivalently, non-trivial), then one
say that $G$ is weakly branch.
The group is said to be branch if $RiSt(n)$ has finite index in $G$ for every $n\geq1$. 
 \end{defi}

\subsection{Dynamical system and Heck type operator}\label{Dynamical system and Heck operator}

The level-transitive action of $G$ on each $n$-th level $A^{n}$ is by permutation, which leads a family of unitary
representation on the Hilbert space of square summable (by our assumptions $A^{n}$ n is a finite set, one can substitute the square summability by the boundedness, $l^{2}(A^{n})=C_{b}(A^{n})$,
\begin{center}
$\rho_{n} : G_{n} \rightarrow \mathcal{B}(l^{2}(A^{n}))$ , $(\rho_{n}(g)f)(x) =  f(g^{-1}.x)$,
\end{center}
The regular representation $\rho_{n}$ is a subrepresentation of the tensor product $\bigotimes_{v\in A^{n}}\rho_{G/St(v)}$, authors \cite{bartholdi1999parabolic} showed,
that converges to a faithful regular representation $\lim_{n\rightarrow\infty}\rho_{n}=\rho$.

Let $G$ be finitely generated with generating set $\lbrace s_{1}, \cdots ,s_{i} \rbrace$ ,$1\leq i  \leq\vert S \vert$; for real parameters $\Gamma_{i}\in \mathbb{R}$, the \textit{Heck-type} operator $H_{\rho}$ is defined on $l^{2}(A^{n})$  as following \cite{bartholdi1999spectrum}
\begin{center}
$H_{\rho_{n}}(\Gamma_{1},\cdots , \Gamma_{\vert S \vert})= \sum_{i=1}^{\vert S \vert} \Gamma_{i} \rho_{n} (s_{i})$.
\end{center}

Although, Markov operator on Schreier graph $\Gamma_{n}$ is a restriction to $l^{2}(A^{n}, deg)$ of the Heck-type operator with respect to parameter $\Gamma_{i}=\frac{1}{\vert S \vert}$, for all $1\leq i  \leq\vert S \vert$ is considered. 
\begin{center}
$(M_{n} f)(x)=\dfrac{1}{\vert S \vert}\sum_{s\in S} f(s.x) = \dfrac{1}{deg(x)}\sum_{x\sim y}f(y)$, for $f\in l^{2}(A^{n}, deg)$
\end{center}

Let $G$ be a finitely generated automaton group that admits a faithful level-transitive self-similar action on the tree $A^{*}$ . The action on the tree rises  to an homeomorphic meaure-class preserving action on the boundary.
The boundary of the regular rooted tree $A^{*}$ denoted by $A^{-\mathbb{N}}$, which is homeomorphic to Cantor set, equipped with the uniform Bernoulli measure $\nu$ which is preserved by the $G$-action , i.e., $g_{*}(\nu)=\nu$ for all $g\in G$. 
The \textit{uniform Bernoulli measure} $\nu$ on the boundary $A^{-\mathbb{N}}$ 
is a Radon measure defined on the cylinders $A^{-\mathbb{N}}w$, for $w\in A^{*}$, by $\nu(A^{-\mathbb{N}}w)= d^{-\vert w \vert}$. That is  $G$-invariant, and is the unique invariant measure if $G$ acts transitively on each level $A^{n}$ of the tree, or equivalently if
$(A^{-\mathbb{N}}, S ,\nu)$ is ergodic.
 Suppose $G$ is generated by the symmetric generating set $S=S\cup S^{-1}$, hence the left $G$-action on the tree's boundary setups a (non-commutative) dynamical system denoted by  $(A^{-\mathbb{N}}, S ,\nu)$. Consequently, the left regular representation unitary representation $\rho$ on the Hilbert space  $L^{2}(A^{-\mathbb{N}}, \nu)$;
\begin{center}
 $\rho:G\rightarrow \mathcal{B}(L^{2}(A^{-\mathbb{N}}, \nu))$,
 $(\rho(g) f) (x) =  f(g^{-1}.x)$
\end{center}
Consider a contracting self-similarity denoted  by $(G, A)$. Let examine the uniform Bernoulli measure, denoted as $\nu$, on the space $A^{-\mathbb{N}}$. Additionally,  the counting measure on the group $G$, and the product measure is defined on the space $A^{-\mathbb{N}}\times G$. By applying the factor map $\pi: A^{-\mathbb{N}}\times G \rightarrow \mathcal{A}_G$,  the push-forward of this measure is obtained, which defines the measure $\mu$ on the limit $G$-space $\mathcal{A}_G$. The measure $\mu$ is a $G$-invariant, $\sigma$-finite, and regular Borel measure on $\mathcal{A}_G$. In the work by authors \cite{bondarenko2011graph}, the following properties of $\mu$ are demonstrated:

\begin{itemize}
\item[1- ] For any $v\in A^{*}$ and $u\in A^{n}$;  $\mu(\mathcal{T}_{v}) = \vert A \vert^{n}\mu(\mathcal{T}_{uv})$.

\item[2- ] For any $v,v^{\prime}\in A^{n}$ such that $v\neq v^{\prime}$; $\mu(\mathcal{T}_{v}\cap \mathcal{T}_{v^{\prime}}) = 0$.

\item[3- ] Let $\mu\vert_{\mathcal{T}}$ be the restriction of $\mu$ to the tile $\mathcal{T}=A^{-\mathbb{N}}\times 1$. Then, for any Borel set $E\subset\mathcal{T}$;
\begin{center}
$\mu\vert_{\mathcal{T}}(E) = \sum_{x\in A}\dfrac{1}{\vert A \vert}\mu\vert_{\mathcal{T}}(\sigma_{x}^{-1}(E))$
\end{center}
\end{itemize}
\begin{defi}
The image of $A^{- \mathbb{N}}\times 1$ in $\mathcal{A}_{G}$ is referred to as the \textit{(digit) tile} $\mathcal{T}$ of the action. The image of $A^{-\mathbb{N}}v \times 1$ for $v \in A^{n}$ is known as the tile $\mathcal{T}_{v}$, which can be expressed as $\mathcal{T}_{v} = \sigma_{v}(\mathcal{T})$. By definition, it follows that:
\begin{center}
$\mathcal{A}_{G} = \bigcup_{g\in G}\mathcal{T}.g$ and $\mathcal{T} = \bigcup_{v\in A^{n}}\mathcal{T}_{v}$
\end{center}

where $\sigma_{v}$ represents a branch inverse of the right-shift map, given by $\sigma_{x}(\cdots x_{2}x_{1}) = \cdots x_{2}x_{1}x$. The tile $\mathcal{T}$ in the limit $G$-space $\mathcal{A}_{G}$ can be interpreted as the attractor of the iterated function $\sigma{x}$ for $x\in A$, which can be written as:

\begin{center}
$\mathcal{T} = \bigcup_{v\in A^{n}}\sigma_{v}(\mathcal{T})$
\end{center}

\end{defi}
Therefore, one can construct a unitary representation of the self-similar group $G$ on the bounded linear endomorphisms of the Hilbert space $L^{2}(A^{-\mathbb{N}}, \mu)$;
\begin{center}
 $\rho : G\rightarrow\mathcal{B}(L^{2}(A^{-\mathbb{N}}, \mu))$\\
 $(\rho(g) f) (x) =  f(g^{-1}.x)$, for $f\in L^{2}(A^{-\mathbb{N}}, \mu)$
\end{center}
\begin{defi}
For a finitely generated group $G$ with generating set $\lbrace a_{1}, \cdots ,a_{r} \rbrace$ ,$r\in \mathbb{N}$, authors \cite{bartholdi2000spectra} defined the Markov operator as follow:\\
For some real parameters $s_{1}, \cdots ,s_{r}\in \mathbb{R}$;
\begin{center}
$M_{\rho}(s_{1}, \cdots ,s_{r}):= \sum_{i=1}^{r} s_{i} \rho (a_{i}) \in \mathcal{B}(L^{2}(A^{-\mathbb{N}}, \nu))$.
\end{center}
\end{defi}

\section{Self-similar action on the rooted tree}\label{Self-similar action of}

The current section is devoted to probing the action of group $G$ on the rooted binary tree, and the portrait of some group elements at different levels of the tree is calculated. The nucleus $\mathcal{N}$ of automaton $Q$ is given, which makes the self-similar group action into a contracting action.\\
Recall that group $\mathcal{G}$ is generated by sates of automaton by the wreath recursion:
$a=\sigma (d ,1)$ , $b=(a,c)$ , $c=(a,a)$ , $d=(1,b)$.
The group $G$ is assembled as a quotient of the free group $\mathbb{F}(S)$ with generating set $S=\lbrace a, b, c, d \rbrace$ modulo subgroup $H$, which is a free group on the set of relations. Let $d_P{S}$ be the word-metric associated with the collection of generators $S$ assigns to each element $g\in G$ a real number $\vert g \vert$ which is the length of the shortest path issued from idendity element in $G$. Duo to the word metric, the Cayley graph $\Gamma(\mathbb{F},S)$ of the free group $\mathbb{F}(S)$ is an $\vert S \vert$-regular rooted tree $\mathcal{T}$, and the Schreier graph $\Gamma(\mathbb{F},H,S)$ is a quotient space of the tree. 
A group homomorphism defines the self-similar group action on the regular rooted tree,
\begin{center}
$\psi_{n} :G\rightarrow Sym(A^{n})\wr G$ \\
$\psi_{n}(g) = \sigma(g\vert_{v_{1}},\cdots, g\vert_{v_{d}})$ for $d=\vert A^{n}\vert$
\end{center}
Where $\sigma$ is a permutation of $A^{n}$ belongs to the symmetry group of permutation $Sym(A^{n})$ on the set $A^{n}$, the homomorphism image of each element $g\in G$, $\psi_{n}(g)$ is called the \textit{portrait} of the element $g$ at level $n$-th. The portrait of each generating element at the first level is shown in  \ref{wreath recurssion}.

Since each generator of the group $G$ acts by permutation on alphabet letters, the automaton $(Q, A)$ is invertible, and the inverse automaton $(Q^{-1}, A)$ is defined by wreath recursion:

\begin{center}
$a^{-1}=\sigma (1 ,d^{-1}) , b^{-1}=(a^{-1},c^{-1}) , c^{-1}=(a^{-1},a^{-1}) , d^{-1}=(1,b^{-1}) $
\end{center}
\begin{lem}\label{contracting lemma}
Group $G$ generated by the automaton \ref{Moore diagram}  $(Q,A)$  acts by contracting automorphism on the rooted tree $A^{*}$.
\end{lem}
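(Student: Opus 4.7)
The plan is to apply the criterion of Lemma \ref{lemma}: it suffices to produce a finite set $\mathcal{N}\subset G$ and an integer $k\in\mathbb{N}$ such that $(S\cup\mathcal{N})^{2}\vert_{A^{k}}\subseteq\mathcal{N}$, where $S=\{1,a^{\pm 1},b^{\pm 1},c^{\pm 1},d^{\pm 1}\}$. Contraction then follows immediately, and the minimal such $\mathcal{N}$ is the nucleus in the sense of Definition \ref{Nucleus}.

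First I would determine a candidate nucleus $\mathcal{N}$ by iterating the section map. Starting from the generators and their inverses (whose wreath recursions are written explicitly in the text: $a=\sigma(d,1)$, $b=(a,c)$, $c=(a,a)$, $d=(1,b)$, together with $a^{-1}=\sigma(1,d^{-1})$, etc.), I form the set $S\cup S^{-1}\cup\{1\}$ and compute all restrictions $g\vert_{x}$ for $x\in A$. This will introduce a few new elements (such as $da$, $bd^{-1}$, and their conjugates under the wreath recursion); I then add these to the set and repeat, taking restrictions at depth $2,3,\dots$ Because each generator's second-level restriction is strictly shorter, either in word length or in "complexity" through the identity component of the recursions $b=(a,c)$, $c=(a,a)$, $d=(1,b)$, this iteration stabilizes after finitely many steps. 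The resulting finite set $\mathcal{N}$ is the candidate nucleus whose Moore diagram is displayed in Figure \ref{The nucleus's Moore diagram}.

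Next I would verify the closure criterion of Lemma \ref{lemma}. For each pair $g,h\in S\cup\mathcal{N}$ and each $v\in A^{k}$ (for the minimal $k$ observed in the first step), one checks, using the cocycle identity $(gh)\vert_{v}=g\vert_{h(v)}\cdot h\vert_{v}$ applied $k$ times, that $(gh)\vert_{v}\in\mathcal{N}$. This is a finite and purely mechanical verification once $\mathcal{N}$ has been produced; it was carried out by the computer program mentioned in the footnote of the introduction.

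The main obstacle is the first step: showing that the iterative enumeration of restrictions genuinely terminates. The delicate point is that the generator $a=\sigma(d,1)$ has restriction $d$ of equal syllable weight, and similarly $d\vert_{1}=b$, $b\vert_{0}=a$; the word length does not drop in a single descent. The standard device is to pass to restrictions of depth $\geq 2$: by chaining the wreath recursions one observes, for example, $a\vert_{00}=d\vert_{0}=1$ and $b\vert_{00}=a\vert_{0}=d$, which are shorter than the original (taking into account the occurrence of the identity state $1$ in the recursions for $a$ and $d$). Formalising this requires tracking a weighted length function in which each occurrence of $a,b,c,d$ contributes a fixed cost and verifying that two descents in the tree strictly decrease it on any sufficiently long word; this bounds the depth needed and simultaneously proves termination. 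Once this bound $k$ is in hand, the finiteness of $\mathcal{N}$ follows and the criterion of Lemma \ref{lemma} applies.
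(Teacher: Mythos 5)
Your proposal follows essentially the same route as the paper: exhibit an explicit finite candidate set $\mathcal{N}$ (the paper lists a $67$-element set $\mathcal{N}=\mathcal{U}\cup\mathcal{U}^{-1}$, closed under restriction as displayed in the Moore diagram of the nucleus) and then verify the criterion of Lemma \ref{lemma} by a finite computation; the paper carries out exactly this check at depth $n_{0}=7$, tabulated in the Appendix, restricting attention to products $s\cdot x$ with $s\in S\cup S^{-1}$ and $x\in\mathcal{N}$, which suffices because $\mathcal{N}$ contains the generators and is closed under taking sections.

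One remark on what you call the main obstacle. The termination of your enumeration procedure is not logically part of the proof: once a finite candidate $\mathcal{N}$ is in hand, the inclusion $(S\cup\mathcal{N})^{2}\vert_{A^{k}}\subseteq\mathcal{N}$ is a self-contained finite certificate, and that certificate is the whole argument. Moreover, the weighted-length device you sketch to force termination is unlikely to go through as stated: the restriction graph on the generators contains the cycles $a\to d\to b\to a$ and $a\to d\to b\to c\to a$ (this is precisely why the automaton has exponential activity growth), so no bounded number of descents strictly decreases a per-letter cost; the contraction only manifests through cancellations arising in products, which is why the verification needs depth $7$ and the nucleus contains words of length up to $4$. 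You should therefore drop the termination lemma and let the explicit finite verification carry the argument, exactly as the paper does.
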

\begin{proof}
We introduce the set $\mathcal{N}$ as the collection of group elements and assert that it forms the nucleus of the automaton.
Evidently, $\mathcal{N}$ generates the group, as it encompasses the group generators $S = {a, b, c, d}$.

Let put:
\begin{flushleft}
$\mathcal{U} =\lbrace a$, $ b$, $ d$, $ c$, $ da$, $ bd$, $ cb$, $ ac$, $ a^{-1}b$, $ d^{-1}c$, $ a^{-1}c$, $ d^{-1}a$, $ b^{-1}d$, $ c^{-1}b$, $ a^{2}$,
$a^{-1}da$, $ d^{-1}bd$, $ b^{-1}cb$, \\
$ c^{-1}ac$, $a^{-1}bd$,
 $ d^{-1}cb$, $ b^{-1}ac$, $ c^{-1}da$, $ a^{-1}cb$, $ d^{-1}ac$, $ b^{-1}da$,
$ c^{-1}bd$, $ a^{-1}d^{-1}bd$, $ d^{-1}b^{-1}cb$, $ b^{-1}c^{-1}ac$, $ c^{-1}a^{-1}da$, 
$ a^{-1}d^{-1}cb$, $ d^{-1}b^{-1}ac
 \rbrace$
\end{flushleft}
By definition, the nucleus of an automaton should be a symmetry set, therefore define:
\begin{equation}\label{nucleus}
\mathcal{N}= \mathcal{U}\cup \mathcal{U}^{-1}
\end{equation}

To establish $\mathcal{N}$ as the desired nucleus, two key criteria must be met. Firstly, it must be demonstrated that $\mathcal{N}$ remains invariant under the virtual endomorphism $\phi : G \rightarrow G$ (which establishes the self-similarity of the group). This is attempted to be showcased through the Moore diagram of the nucleus depicted in Figure \ref{The nucleus's Moore diagram}. In fact, Moore diagram illustrates that for any $x \in \mathcal{N}$, we have $\phi_{i}(x) \in \mathcal{N}$ for $i = 0, 1$.

Following this, we need to confirm that $\mathcal{N}$ satisfies the conditions outlined in Lemma \ref{lemma}. Since the nuclus contains the group generators $S=\lbrace a, b, c, d \rbrace\subset\mathcal{N}$, to achieve this, it suffices to demonstrate that there exists an integer $n_{0}$ such that for all $n \geq n_{0}$:
\begin{equation}\label{inclusion}
((S\cup S^{-1}) \otimes \mathcal{N})\vert_{A^{n}} \subseteq \mathcal{N}
\end{equation}
Through direct calculations the image of group's element  $a x$ under virtual-endomorphism $\phi$ for each pair of group's
 elements $(a, x)$, where  $a\in  S$ and $x\in\mathcal{N}$,
We observed that for $n_{0}=7$ the above inclusion (\ref{inclusion}) is hold, i.e., $a\otimes x\vert_{X^{7}}\in \mathcal{N}$
 (for brevity, detailed calculations are provided in Appendix \ref{appendix}), as a result, the above condition is satisfied for $n_{0} = 7$, thereby completing the proof.

\end{proof}

\begin{figure}
 \centering
    \includegraphics[width=0.5\linewidth]{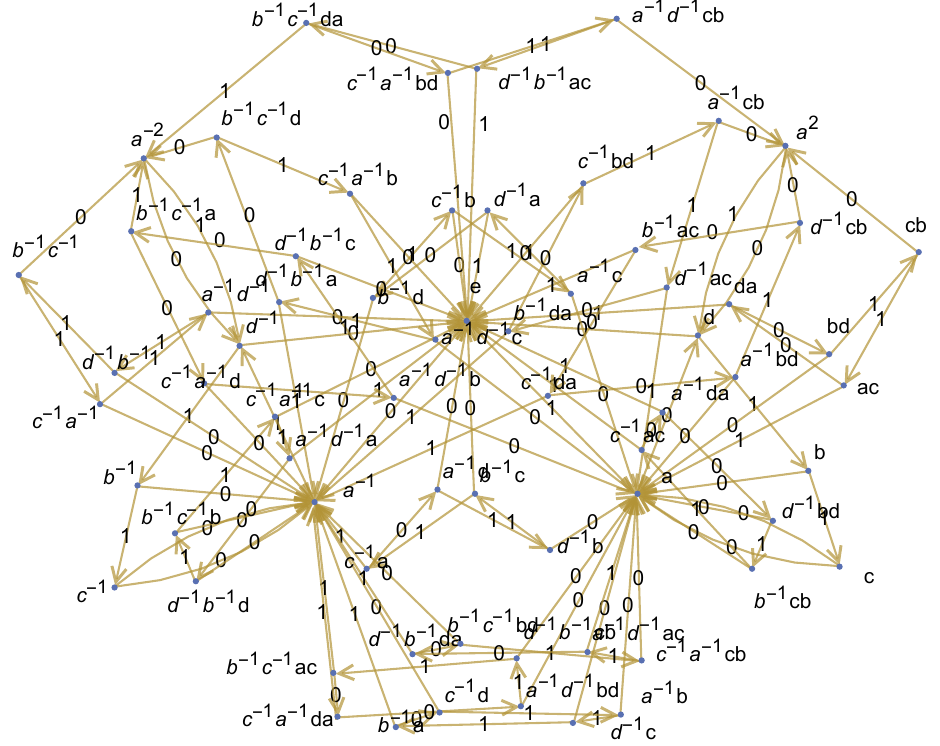}
      \caption{The nucleus's Moore diagram}
  \label{The nucleus's Moore diagram}
\end{figure}

 \begin{lem}\label{fractal lemma}
Group $G$ is a fractal group.

\end{lem}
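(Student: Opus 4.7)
The plan is to verify directly the two conditions in the definition of a fractal (self-replicating) group: level-$1$ transitivity, and surjectivity of the first-coordinate section map. Transitivity on $A = \{0,1\}$ is immediate from the wreath recursion $a = \sigma(d, 1)$, which makes $a$ act by the nontrivial permutation $\sigma$ on the first letter. So the remaining task is to show that
\[
\psi_{0} : St_{G}(1) \longrightarrow G, \qquad g \longmapsto g|_{0}
\]
is onto. Since $\psi_0$ is a group homomorphism and $G = \langle a, b, c, d \rangle$, it suffices to realize each generator $s \in \{a, b, c, d\}$ as $g|_0$ for some $g \in St_G(1)$.

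For the generator $a$ there is nothing to do: $b \in St_G(1)$ and the wreath recursion $b = (a, c)$ gives $b|_{0} = a$. To pick up $d$ I will use $a^{2}$: a direct calculation from $a = \sigma(d, 1)$ gives $a^{2} = (d, d) \in St_{G}(1)$, whence $(a^{2})|_{0} = d$. To pick up $b$ and $c$ I will use a conjugation trick. For any $g = (g_{0}, g_{1}) \in St_{G}(1)$, unfolding $(a g a^{-1})(0v)$ using the already-established formula $a^{-1} = \sigma(1, d^{-1})$ yields the clean identity
\[
(a g a^{-1})|_{0} \;=\; g|_{1}.
\]
Applying it with $g = d = (1, b)$ gives $(a d a^{-1})|_{0} = b$, and applying it with $g = b = (a, c)$ gives $(a b a^{-1})|_{0} = c$.

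Putting the four computations together, the image of $\psi_{0}$ contains $\{1, a, b, c, d\}$ and therefore equals $G$, so $\psi_{0}$ is surjective. Combined with the first-level transitivity coming from $a$, this establishes that $G$ is self-replicating in the sense of Definition~\ref{self-replicating}, i.e.\ fractal.

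I do not expect a real obstacle here: the proof reduces to four short wreath-product calculations. The only thing to watch is a bookkeeping issue, namely keeping the section conventions consistent with the Moore diagram of Figure \ref{Moore diagram} (so that $a|_{0} = d$ and $a|_{1} = 1$, rather than the opposite). Once that is fixed the identity $(aga^{-1})|_{0} = g|_{1}$ and the formula $a^{2} = (d,d)$ fall out immediately, and the lemma follows.
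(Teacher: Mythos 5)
Your proof is correct and follows essentially the same route as the paper: level-$1$ transitivity via the active generator $a$, and surjectivity of $\psi_0$ on $St_G(1)$ by producing $a$, $d$, $b$, $c$ as sections at $0$ of $b$ (the paper uses $c$), $a^{2}$, $ada^{-1}$ and $aba^{-1}$ respectively, with the same identity $(aga^{-1})|_0=g|_1$ doing the work. The paper additionally lists the inverse generators and sketches an induction to all levels, but both are redundant given that the image of the homomorphism $\psi_0$ is a subgroup and that the definition only requires surjectivity at one letter.
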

\begin{proof}
The transitive action on the first level is evident; for each letter $x,y\in \lbrace 0,1 \rbrace$, the element $a$ permutes $x$ and $y$, $a.x=\sigma(x)=y$. For self-replicating, to demonstarting that the virtual endomorphism $\psi_{x}:St_{G}(x)\rightarrow G$ for any $v\in A^{*}$  is surjective, that is sufficient to show for each generator $s \in \lbrace a^{\pm} ,b^{\pm},c^{\pm},d^{\pm} \rbrace$, and any letter $x\in A$ there exists an element $g\in Dom(\psi_{x})=St_{G}(x)$ such that $g\vert_{x}= s $.
\begin{center}

$a^{\pm1}= \psi_{x}(c^{\pm})=c^{\pm}\vert_{x} , x\in A $  ,  $ b=(a d a^{-1})\vert_{0} =a\vert_{1} d\vert_{1} a^{-1}\vert_{0}= d\vert_{1}$ , \\ $c=(a b a^{-1})\vert_{0}= a\vert_{1} b\vert_{1}a^{-1}\vert_{0}$, $d=a\vert_{0}= a^{2}\vert_{1}=a\vert_{0} a\vert_{1}$, \\
 $ b^{-1}=(a d^{-1}a^{-1})\vert_{0} =d^{-1}\vert_{1}$,$c^{-1}=(a b^{-1}a^{-1})\vert_{0}= b^{-1}\vert_{1}$,\\ $d^{-1}=a^{-2}\vert_{0}= a^{-1}\vert_{1}$
\end{center}
Now assume $\psi_{w}$ is surjective for $w\in A^{n-1}$, let $v\in A^{n}$ and $s\in \lbrace a^{\pm 1} ,b^{\pm 1},c^{\pm 1},d^{\pm 1} \rbrace$. The vertiex $v$ is a finite word with length $n$, $v=x_{1}x_{2}\cdots x_{n}$, where $x_{i}\in A$. We have to find element $g\in G$ such that $g\vert_{v}=s$, on the other hand
\begin{center}
$g\vert_{v} = g\vert_{x_{1}}\vert_{x_{2}}\cdots\vert_{x_{n}}$
\end{center}
hence it is suffient to find  elements  $g_{n},g_{n-1}\cdots g_{1}\in G$ such that $g_{i}\vert_{x_{i+1}}=g_{i+1}$, due to above argument it is always possible.

\end{proof}
We have observed the following:
\begin{cor}\label{coro self-replicating}
  The automaton group $G$  has a contracting self-replacing action on the binary tree.
\end{cor}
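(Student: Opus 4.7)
The plan is to observe that this corollary is an immediate consequence of the two preceding lemmas, so the proof reduces to bookkeeping rather than new mathematics. First I would invoke Lemma \ref{contracting lemma}, which exhibits the explicit finite symmetric set $\mathcal{N}=\mathcal{U}\cup\mathcal{U}^{-1}$ of \eqref{nucleus} and verifies both that $\mathcal{N}$ is closed under the virtual endomorphism (via the Moore diagram in Figure \ref{The nucleus's Moore diagram}) and that the contraction condition $((S\cup S^{-1})\otimes\mathcal{N})\vert_{A^{n_0}}\subseteq\mathcal{N}$ of Lemma \ref{lemma} holds at level $n_0=7$. This establishes that the self-similar action of $G$ on $A^{*}$ is contracting in the sense of Definition \ref{Nucleus}.

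Next I would invoke Lemma \ref{fractal lemma}, which shows $G$ acts transitively on the first level (via the generator $a=\sigma(d,1)$) and that the section map $\psi_{x}:\mathrm{St}_{G}(x)\to G$ is surjective for every $x\in A$, then extends this by induction in $n$ to conclude that $\psi_{v}$ is surjective for every $v\in A^{*}$. By Definition \ref{self-replicating} this is precisely what it means for the action to be self-replicating (fractal).

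Combining these two properties — contracting from Lemma \ref{contracting lemma} and self-replicating from Lemma \ref{fractal lemma} — directly yields the statement of Corollary \ref{coro self-replicating}. There is no real obstacle here; the only subtlety worth flagging explicitly in the write-up is that, as noted in Definition \ref{self-replicating} via Nekrashevych's Proposition 2.11.3 of \cite{nekrashevych2005self}, for a finitely generated contracting self-replicating group the nucleus $\mathcal{N}$ automatically generates $G$, which is consistent with the inclusion $S\subset\mathcal{N}$ already observed in the proof of Lemma \ref{contracting lemma}. Thus the corollary follows in one or two lines.
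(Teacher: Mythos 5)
Your proposal is correct and matches the paper exactly: the corollary is stated immediately after Lemma \ref{contracting lemma} and Lemma \ref{fractal lemma} precisely as their conjunction, with no further argument needed. Your additional remark that the nucleus generates $G$ by Nekrashevych's Proposition 2.11.3 is also the observation the paper makes right after the corollary.
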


The nucleus of automaton group $G$ has 67 elements, and by proposition 2.11.3 \cite{nekrashevych2005self}, since the automaton group $G$ is contracting self-replicating, the nucleus generates the group $G=\langle \mathcal{N}\rangle$.

\begin{lem}\label{openset lemma}
The automaton group $G$ satisfies the open set condition. 
\end{lem}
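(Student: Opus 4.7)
The plan is to verify the open set condition in its equivalent form: for every element $g\in\mathcal{N}$ of the nucleus there exists a word $v\in A^{*}$ with $g\vert_{v}=1$. Since Lemma \ref{contracting lemma} already establishes that the nucleus is invariant under restrictions, this reduces the problem to a finite reachability check on the 67 elements listed in (\ref{nucleus}).

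The starting observation is that the required trivial restrictions for the generators can be read off directly from the wreath recursion. Namely $a=\sigma(d,1)$ gives $a\vert_{1}=1$, and $d=(1,b)$ gives $d\vert_{0}=1$; then $b=(a,c)$ and $c=(a,a)$ yield $b\vert_{01}=a\vert_{1}=1$ and $c\vert_{01}=a\vert_{1}=1$. Passing to the inverse automaton exhibits the same behaviour for $a^{-1},b^{-1},c^{-1},d^{-1}$. Hence every generator and every inverse generator has a trivial restriction at a word of length at most two.

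For the remaining elements of $\mathcal{N}$, the plan is to exploit the Moore diagram of the nucleus (Figure \ref{The nucleus's Moore diagram}). That diagram carries an edge from $g$ to $h$ labelled $x\vert g(x)$ precisely when $h=g\vert_{x}$, so the open set condition translates into the purely graph-theoretic statement that the identity vertex $1$ is reachable from every vertex of the diagram. I would verify this inductively on the word length of a nucleus element over $S\cup S^{-1}$. For any product $g=s_{1}\cdots s_{k}$ with $k\geq 2$, one has $g\vert_{x}=s_{1}\vert_{x}\cdot s_{2}\vert_{s_{1}(x)}\cdots s_{k}\vert_{(s_{1}\cdots s_{k-1})(x)}$, and because several generators carry a letter $x$ to a restriction equal to $1$ (for example $a\vert_{1}=1$, $d\vert_{0}=1$), one can choose $x$ so that the resulting restriction is a word strictly shorter than $g$, which by the induction hypothesis eventually restricts to $1$. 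For instance $(da)\vert_{0}=d\vert_{0}\cdot a\vert_{d(0)}=1\cdot a\vert_{1}=1$, while $(bd)\vert_{0}=a\cdot d\vert_{0}=a$ and then $a\vert_{1}=1$; similar computations dispose of every entry of $\mathcal{U}$.

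The principal obstacle is not conceptual but bookkeeping: there are 67 nucleus elements and each restriction step branches on the current letter, so the verification amounts to tracing a short path to $1$ in the Moore diagram for every one of them. Because the length of elements of $\mathcal{N}$ is bounded by $4$, and because a restriction witnessed by a single letter always shortens the length (as seen in the calculations above), the induction terminates after at most four steps for any $g\in\mathcal{N}$, yielding the explicit witness $v_{g}\in A^{*}$ with $g\vert_{v_{g}}=1$ and finishing the proof. This finite check is naturally handled by the Python computation used throughout the paper and is made transparent once the Moore diagram of the nucleus is in hand.
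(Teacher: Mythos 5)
Your overall strategy is the same as the paper's: by the definition of the open set condition it suffices to exhibit, for each of the finitely many nucleus elements $g\in\mathcal{N}$, a word $v_g\in A^{*}$ with $g\vert_{v_g}=1$, and the paper does exactly this by listing explicit witnesses (e.g.\ $a\vert_{1}=d\vert_{0}=b\vert_{01}=c\vert_{11}=1$, $(c^{-1}a^{-1}bd)\vert_{000}=1$, and so on). Your treatment of the generators and their inverses is correct, and your reformulation as reachability of the identity vertex in the Moore diagram of the nucleus is the right way to think about the finite check.

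However, the inductive argument you offer in place of the explicit check has a genuine gap: the claim that for any product $g=s_{1}\cdots s_{k}$ one can choose a letter $x$ so that $g\vert_{x}$ is \emph{strictly shorter} than $g$ is false. For example $c=(a,a)$ and $b=(a,c)$ are both inactive at the root and neither $b$ nor $c$ has a trivial first-level section, so $(cb)\vert_{0}$ and $(cb)\vert_{1}$ are (up to the ordering convention) $a^{2}$ and $ac$, both still of length two. Hence length does not serve as an induction parameter, and without it your argument does not rule out a priori a cycle among equal-length nucleus elements that never reaches the identity; termination really does have to be certified by tracing each of the $67$ elements to the vertex $1$, which is precisely the content of the paper's list of witnesses (note $cb$ needs three letters: $cb\to a^{2}\to d\to 1$). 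There is also a computational slip in your illustrative example: with your stated convention $g\vert_{x}=s_{1}\vert_{x}\cdot s_{2}\vert_{s_{1}(x)}\cdots$ one gets $(da)\vert_{0}=d\vert_{0}\,a\vert_{d(0)}=a\vert_{0}=d\neq 1$ (the trivial restriction of $da$ occurs at the other letter, and only under the opposite composition order), which underscores that the verification cannot be waved through and must be carried out element by element as the paper does.
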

\begin{proof}
Since the automaton group $G$ has contracting action on the binary tree $A^{*}$, it is sufficient to finid finite words $v_{g}\in A^{*}$ for each element $g\in \mathcal{N}$ in the nucleus such that $g\vert_{v_{g}}=1$. \\
$a\vert_{1}=d\vert_{0}=b\vert_{01}=c\vert_{11}=1$   ,               
$(c^{-1}a^{-1}b d)\vert_{000}=1$,
$(b^{-1}c^{-1}d a)\vert_{110}=1$,
$(d^{-1}b^{-1}a c)\vert_{000}=1$,
$(a^{-1}d^{-1}c b)\vert_{111}=1$,
$(a^{-1}d^{-1}a c)\vert_{111}=1$,
$(c^{-1}a^{-1}c b)\vert_{010}=1$,
$(b^{-1}c^{-1}b d)\vert_{000}=1$,
$(d^{-1}b^{-1}d a)\vert_{000}=1$,
$(c^{-1}a^{-1}d a)\vert_{001}=1$,
$(b^{-1}c^{-1}a c)\vert_{110}=1$,
$(d^{-1}b^{-1}c b)\vert_{110}=1$,
$(a^{-1}d^{-1}b d)\vert_{000}=1$,
$(d^{-1}b^{-1}c)\vert_{000}=1$,
      
\end{proof}
Now we are going to complete the proof of Theorem \ref{main theorem};
\begin{proof}
Lemma \ref{contracting lemma}, \ref{fractal lemma},  Corollary \ref{coro self-replicating}, Lemma \ref{openset lemma} proved items 1 - 3 of the main theorem \ref{main theorem}, respectively.\\
The exponential growth of activity of automaton $\Pi$ simply follows from the fact that there are two intersecting cycles in the Moore-diagram of the automaton.
\begin{center}
$b\rightarrow a\rightarrow d\rightarrow b$, and $b\rightarrow c\rightarrow a \rightarrow d\rightarrow b$.
\end{center}

(5-) Now we show that the group $G$ is a weakly branch:\\

Due to the level transitive action of the main group on binary rooted tree $A^{*}$, if for some vertex $v\in A^{n}$ on the level $n$-th the vertex-rigid-stabilizer is non-trivial $Rigid_{G}(v)\neq\lbrace 1 \rbrace$ then $Rigid_{G}(u)\neq\lbrace 1 \rbrace$ is non-trivial for all vertices $u\in A^{n}$  on the same level $n$-th. Hence, it is enough to show that for any $k\geq 0$, a non-trivial group’s element exists such that $1 \neq g\in Rigid(4k)\lhd G$, equivalently, there is an element of digit-tile, i.e., $\omega\in(A^{4})^{-\mathbb{N}}\simeq A^{-\mathbb{N}}$, such that for any $k\geq 0$, the rigid-stabilizer subgroup $Rigid_{G}(\omega(4k))$ is non-trivial.
Let us consider element $1\neq [a , c^{-2^{k}}] \in Stab(4k)\unlhd G$, by induction we can show that $[a , c^{-2^{k}}]\in Rigid((0111)^{k})$:

For $k=1$; 
$[a , c^{-2}]\vert_{v}=
\begin{cases}
        [a , c^{-1}] & \text{if } v=0111\\
        1 & \text{if } v\neq 0111
    \end{cases}$, 
That shows $[a , c^{-2}]$ belongs to the rigid-stabilizer subgroup of the vertex $0111\in A^{4}$.    
    
Assume $[a , c^{-2^{k-1}}]\in Rigid((0111)^{k-1})$ is holed for some $k>1$; 
With respect to  \ref{eq-stablizer}  one has $(c^{2^{k-1}})\in Stab(4)$ for $k> 1$. By product rule of commutator's elements, one has:
\begin{center}
$[a , c^{-2^{k}}]=[a , c^{-2^{k-1}}]\underbrace{c^{-2^{k-1}}[a , c^{-2^{k-1}}]\underbrace{c^{2^{k-1}}}_{\in Stab(4)}}_{\in Rigid(0111)}$,
\end{center}
Hence the non-trivialness of the  element $[a ,c^{-1}]$ induces non-trivialness of  the element $[a , c^{-2^{k}}]\in Stab(3k+1)$, for all  $k\geq 0$.
Further, the portrate of  $[a , c^{-2^{k}}]$ on the level 4-th is:
\begin{equation}\label{p-eq}
\phi_{4}( [a , c^{-2^{k}}])= (1, (1, 1, 1, 1, 1, 1, 1,  \underbrace{[a , c^{-2^{k-1}}]}_{\in Rigid((0111)^{k-1})}, 1, 1, 1, 1, 1, 1, 1, 1))
\end{equation}
where $\phi_{4}: G \rightarrow Sym(A^{4})\wr G$  is the portrate on the level 4.\\
Therefore $1\neq [a , c^{-2^{k}}]\in Rigid((0111)^{k})$ for all $k\geq 1$, and this completes the proof.
\end{proof}

\section{Schreier graphs and the spectrum}\label{Schreier graphs and the spectrum}
L. Bartholdi and R. Grigorchuk conducted a comprehensive study in \cite{bartholdi1999spectrum}, exploring the Schreier graphs that arise from the action of the Fabrykowski-Gupta group on the levels of the tree $A^{*}$. Their analysis included computing the spectra of these graphs. Notably, they made a significant observation: the Schreier graphs exhibit convergence towards a fractal set. This observation played a crucial role in developing and defining the limit space for contracting self-similar groups. As a result, their findings contributed significantly to the understanding of these mathematical structures.

\begin{defi}\label{definition Schreier graph}
Let $G$ be a group generated by a finite set $S$ and let $H$ be a subgroup of $G$. The \textit{Schreier graph} $\Gamma(G,S,H)$ of the group $G$ is the graph whose vertices are the right cosets $G/H = \lbrace Hg : g \in G\rbrace$, and two vertices $Hg_{1}$ and $Hg_{2}$ are adjacent if there exists $s \in S$ such that $g_{2} = g_{1}.s$ or $g_{1} = g_{2}.s$. 
Suppose the finitely generated group $G$ acts on a set $M$. Then the corresponding \textit{Schreier graph}  $\Gamma(G,S,M)$ is the graph with the set of vertices $M$ and set of arrows $S \times M$, where the arrow $(s,v), v\rightarrow s(v)$ starts in $v$ and ends in $s(v)$. The \textit{simplicial Schreier graph} $\overline{\Gamma}(G,S,M)$ remembers only the vertex adjacency (see figure \ref{The Schreier graph of automaton}). The simplicial Schreier graph is an undirected graph with a set of vertices $M$, and two vertices are adjacent if and only if one is an image of another under the action of a generator $s \in S$. 
\end{defi}
  \begin{figure}[h!]
  \begin{subfigure}[b]{0.45\linewidth}
  \centering
    \includegraphics[width=1\textwidth]{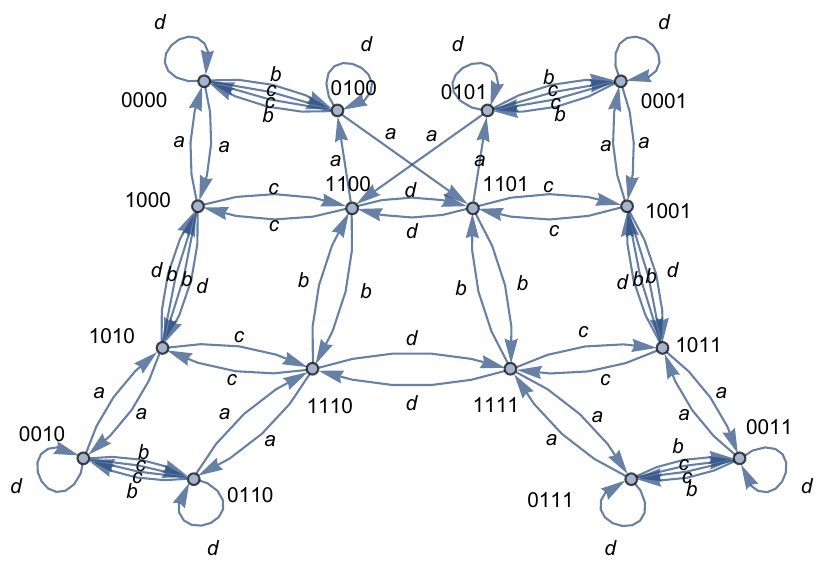}
    \caption{level 4}
  \end{subfigure}
\begin{subfigure}[b]{0.45\linewidth}\centering
    \includegraphics[width=1.2\textwidth]{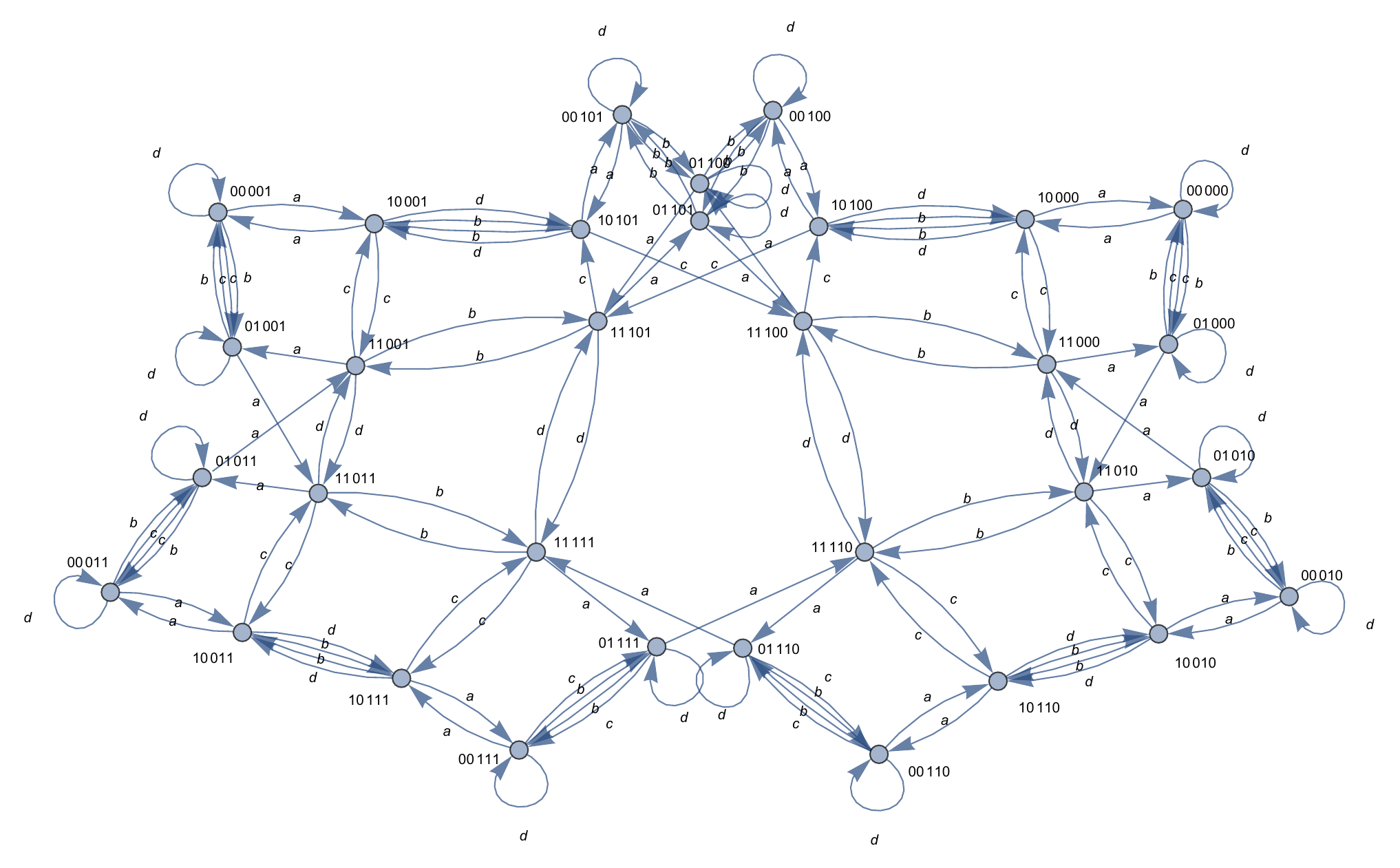}
     \caption{level 5}
  \end{subfigure}
  \caption
{The Schreier graph of the the automaton generated with the wreath recursion $a=\sigma (d ,1) , b=(a,c) , c=(a,a) , d=(1,b)$.}
  \label{The Schreier graph of automaton}
\end{figure}

Assume $G$ is a finitely generated self-similar group that acts faithfully and level transitively on the vertices of the tree $A^{*}$. Then level stabilizer subgroups $St_{G}(n)\unlhd G$ form a decreasing sequence of the normal subgroups of the finite index such that;
\begin{center}
$G\unrhd St(1)\unrhd\cdots St(n)\unrhd St(n+1)\cdots$ ;  $\bigcap_{n\in\mathbb{N}} St_{G}(n) = 1$
  , $[G : St(n)]< \infty$
\end{center}
Which provides a family of open neighborhoods around the identity element $e\in G$, yields a profinite completion $\hat{G}=\underleftarrow{\lim} G_{n}$ where $G_{n}:=G/St(n)$. 

Let $S$ is a finite symmetry generating set of $G$, i.e., $S=S^{-1}$, and $\lbrace v_{n} \rbrace_{n>0}$ is a sequence of finite words such that each word $v_{n}$ has word $v_{n-1}$ as a prefix; $v_{n}= x_{n}\cdots x_{2}x_{1} \in X^{n}$. 
Therfore the associated Schreier graphs $\Gamma_{n}=\Gamma(G, S, St(n)) = \Gamma(G_{n}, S, A^{n})$ (resp; orbital Schreier graphs $\Gamma_{v}=(G/St(v),S, A^{n})$, for $v\in A^{n}$) form a family of connected unifomly bounded degree (resp; marked) graphs $\lbrace(\Gamma_{n})\rbrace_{n\in\mathbb{N}}$ (resp; $\lbrace(\Gamma_{v},v_{n})\rbrace_{n\in\mathbb{N}}$ with base point $v_{n}\in A^{n}$). Further, the Schreier graph on the $n+1$-th level with the shift map $\sigma$, as a covering map, is a covering space on the $n$-th Schreier graph, i.e., $\sigma :(\Gamma_{n+1}, v_{n+1})\rightarrow (\Gamma_{n}, \sigma(v_{n+1})=v_{n})$. It can be seen that the family $\lbrace\Gamma_{n}\rbrace$ converges to an unifomly bounded degree (resp; marked) infinite graph $(\hat{\Gamma}, \hat{v})$. The convergence happens by the Hausdorff-Gromov metric on the space of marked graphs of uniformly bounded degrees (for more details see  \cite{grigorchuk1999asymptotic}).
Let $\cdots x_{2}x_{1}=\hat{v}\in A^{-\mathbb{N}}$;
\begin{center}
$d_{H}((\Gamma_{n}, v_{n}), (\hat{\Gamma}, \hat{v})):=\inf\lbrace \dfrac{1}{r+1} \vert B_{\Gamma_{n}}(v_{n} , r)$ is isometric to $B_{\Gamma_{\hat{v}}}(\hat{v} ,r) \rbrace$\\
$\lim_{n\rightarrow\infty}d_{H}((\Gamma_{n}, v_{n}), (\hat{\Gamma}, \hat{v})) = 0$,
\end{center}
Where $v_{n}=x_{n}\cdots x_{2}x_{1}\in A^{n}$, the infimum is taken over balls $B_{\Gamma}(v_{n},r)\subset \Gamma$ with center $v_{n}$ and radius $r$ in Schreier graph $\Gamma$. Nekrashevich \cite{nekrashevych2005self} showed when the iterated monodromy group $IMG(f)$, for a rational self-covering of a Riemann surface $f:S\rightarrow S$, acts by contraction on the regular rooted tree $A^{*}$, one has homeomorphism $\hat{\Gamma}\simeq\mathcal{J}_{IMG(f)}$.

\subsection{The spectrum of Laplacian}\label{The spectrum of Laplacian}

Let $(G,A)$ be an automaton group that acts level-transitively defined over alphabet $A$ with a finite symmetric generating set $S=S^{-1}$ possess a decreasing sequence of finite index normal subgroups $St(n)$ , i.e., $[G : St(n)]< \infty$. The decreasing sequence provides a tower of finite graphs as a sequence of covering spaces $\cdots\Gamma_{n}\rightarrow\Gamma_{n-1}\cdots$ with covering transformation groups $G_{n}=G/St(n)$. The limit Schreier graph $\hat{\Gamma} = \lim \Gamma_{n}$, which is an infinite locally finite connected graph when the self-similar $G$-action is level-transitive contacting self-replicating. Indeed $(\hat{\Gamma}, \nu)$, is the congruence of a family of marked finite graph $\lbrace (\Gamma_{n}, \nu_{n}) \rbrace$,i.e., $n$-th level Schreier graph, with respect to Gromov-Hausdorff metric, where $\nu=\cdots x_{2}x_{1}\in A^{-\mathbb{N}}$, $v_{n}=x_{n}\cdots x_{2}x_{1}$, and $d_{H}( (\Gamma_{n}, \nu_{n}),(\hat{\Gamma}, \nu) )\rightarrow 0$, with vertex set $A^{-\mathbb{N}}$.
 
 The combination of Kesten \cite{kesten1959symmetric}(lemma 1.4) and
the generalized Alon–Boppana by \cite{grigorchuk1999asymptotic} theorem results:
\begin{center}
$\dfrac{2 \sqrt{\vert S \vert -1}}{\vert S \vert}\leq\Gamma(G,S) \leq \liminf_{n\rightarrow\infty} \Gamma(G/St(n), \bar{S})$
\end{center}
Where $\Gamma(G,S)$ and $\Gamma(G/St(n) ,\bar{S})$ respectively is the spectral radius of the Markov operator on Cayley graph $\Gamma(G,S)$
graphs and $\Gamma(G/St(n) ,\bar{S})$, i.e., 
\begin{center}
$\Gamma(G,S) = \limsup_{q\in\mathbb{N}} [$ Probability of returning to $e$ at the $q$-th step given  that one starts at $e ]^{1/q}$ , and  $\Gamma(G/St(n) ,\bar{S})= \limsup_{q\in\mathbb{N}} [$ Probability of returning to $St(n)$ at the $q$-th step given  that one starts at $e ]^{1/q}$.
\end{center}

  \begin{figure}[h!]
  \begin{subfigure}[a]{0.45\linewidth}
  \centering
    \includegraphics[width=1\textwidth]{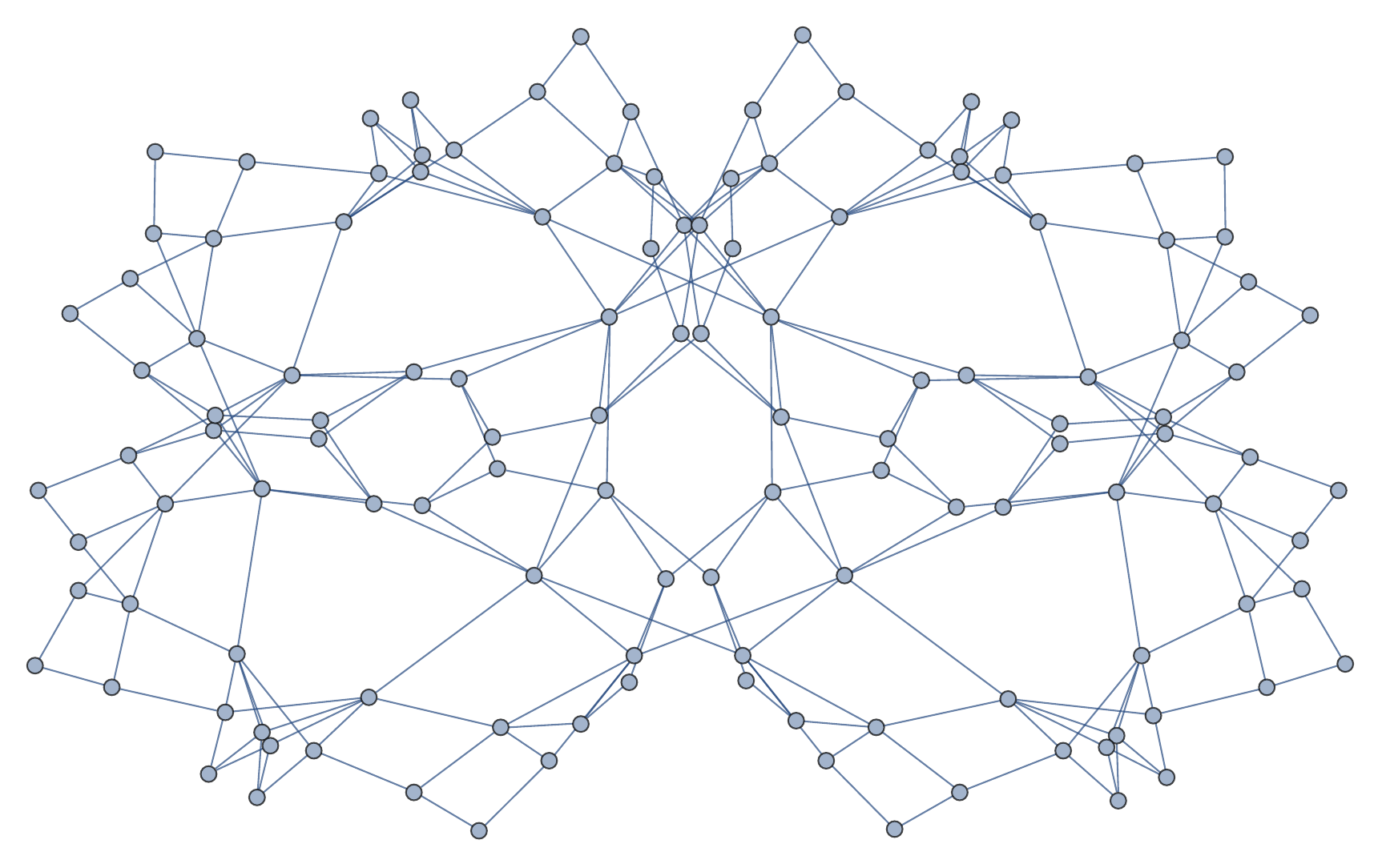}
    \caption{Schreier graph}
  \end{subfigure}
\begin{subfigure}[b]{0.4\linewidth}
\centering
    \includegraphics[width=1\textwidth]{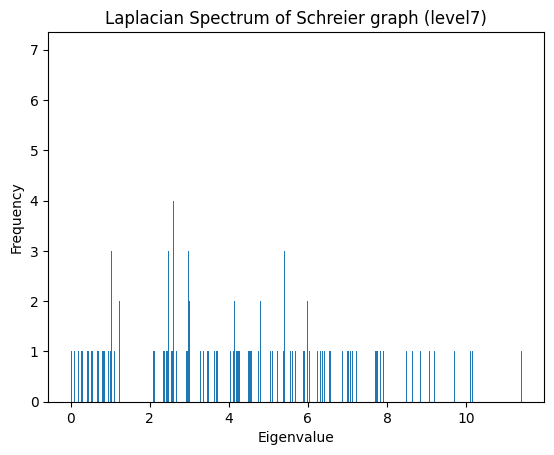}
     \caption{The spectrum}
  \end{subfigure}
  \caption
{The simplicial Schreier graph and its spectra on the 7-th level.}
  \label{The Schreier graph 7}
\end{figure}

  \begin{figure}[h!]
  \begin{subfigure}[b]{0.45\linewidth}
  \centering
    \includegraphics[width=1\textwidth]{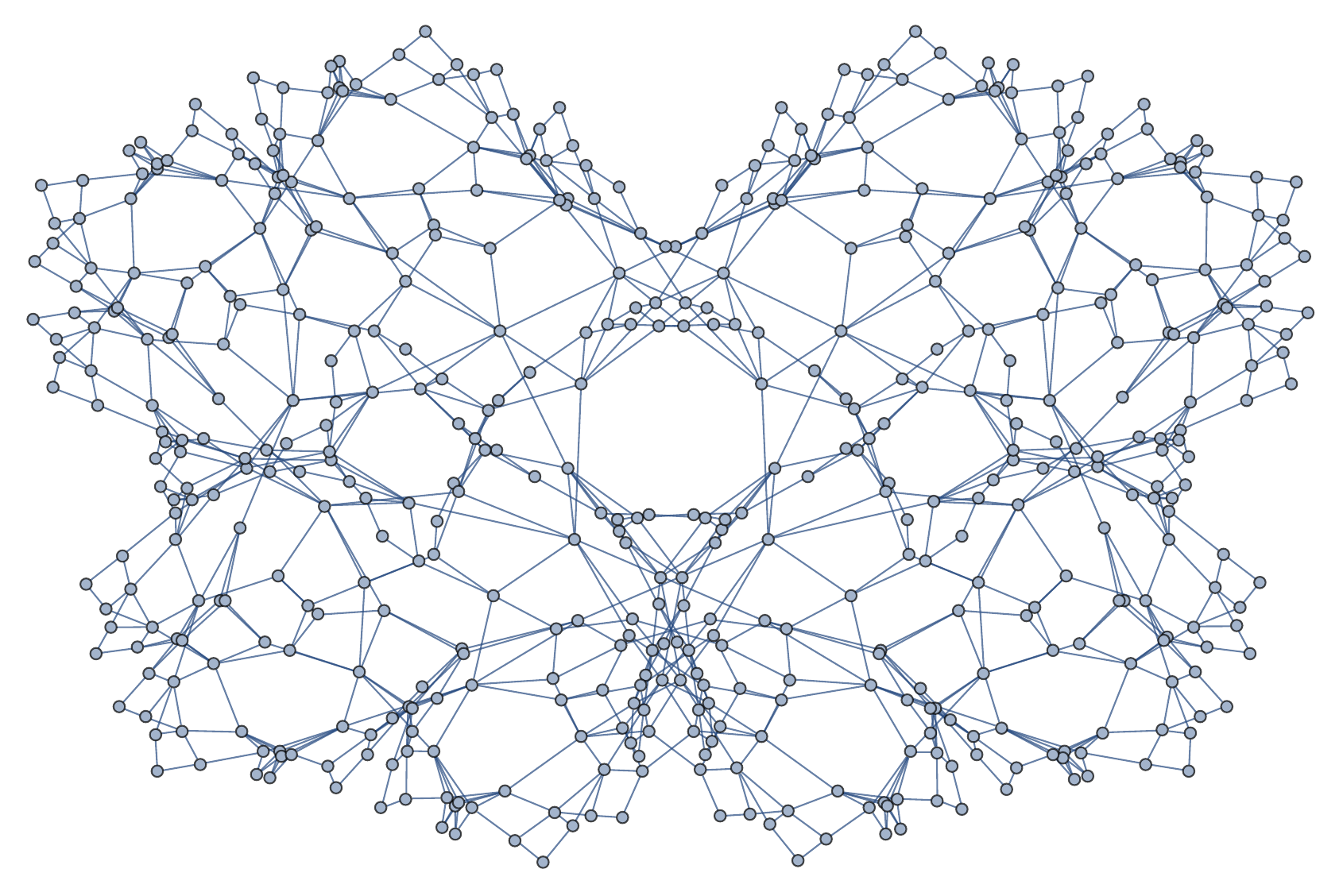}
    \caption{Schreier graph}
  \end{subfigure}
\begin{subfigure}[b]{0.4\linewidth}\centering
    \includegraphics[width=1\textwidth]{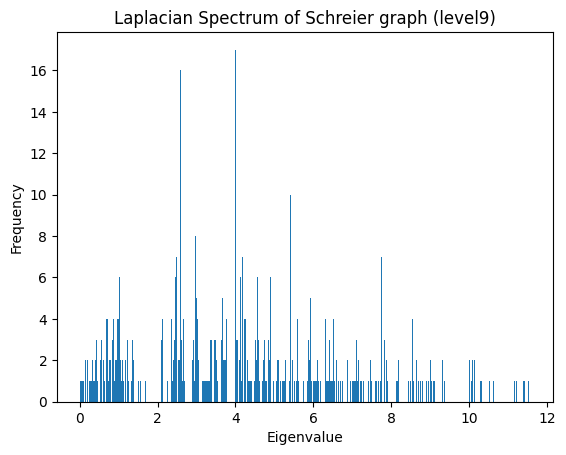}
     \caption{The spectrum}
  \end{subfigure}
  \caption
{The simplicial Schreier graph and its spectra on the 9-th level.}
  \label{The Schreier graph 9}
\end{figure}
  \begin{figure}[h!]
  \begin{subfigure}[b]{0.5\linewidth}
  \centering
    \includegraphics[width=1\textwidth]{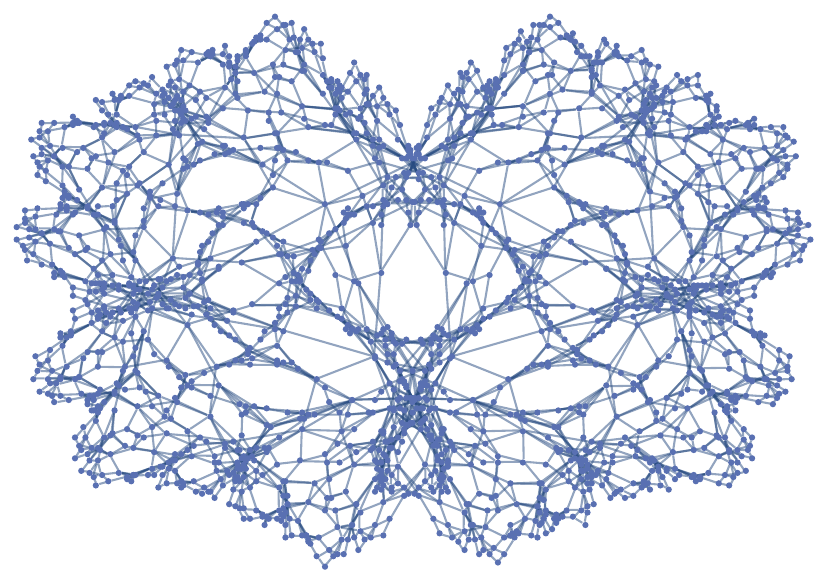}
    \caption{Schreier graph}
  \end{subfigure}
\begin{subfigure}[b]{0.4\linewidth}\centering
    \includegraphics[width=1\textwidth]{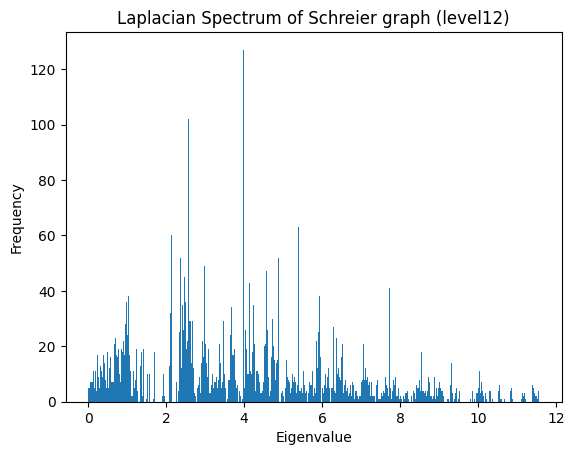}
     \caption{The spectrum}
  \end{subfigure}
  \caption
{The simplicial Schreier graph and its spectra on the 12-th level.}
  \label{The Schreier graph 12}
\end{figure}

Despite massive literature about the spectra of Markov or Laplacian operator on Cayley graphs $(G,S)$ of infinite groups, the first exclusive spectrum computation of a self-similar group appeared in works \cite{grigorchuk2001lamplighter}\cite{bartholdi1999spectrum} by R.Grigorchuk, A.Zuk, and L. Bartholdy. in \cite{bartholdi1999spectrum}, the authors presented the first example of a connected 4-regular graph, which is the Schreier graph of a group (Grigorchuk group) with intermediate growth and Cantor set as the spectrum of the Laplacian operator. They formulated their findings as quasi-regular representations of fractal groups. Authors \cite{grigorchuk2020spectra} also precisely determined the spectrum
for a class of spinal groups with specific parameters: $d \geq 2$, $m \geq 1$, and $\omega \in \Omega_{m, d}$. Another interesting discovery was made in \cite{grigorchuk2012spectral}, where it was demonstrated that the spectrum of the Cayley graph of the Lamplighter group $\mathbb{Z}_{2}\wr \mathbb{Z}$ consists entirely of point spectra. Furthermore, authors in [\cite{grigorchuk1999asymptotic}, \cite{grigorchuk2012spectral}, and \cite{grigorchuk2021spectra} computed the spectral measure of the Markovian operator on the Schreier graph of an infinite discrete group, which possesses a
decreasing sequence of normal subgroups with finite indices. This infinite graph serves as the convergence space for a sequence of graphs forming towers of coverings on a d-regular finite graph. Further, \cite{bondarenko2008classification} provides a classification of groups generated by a 3-state automaton over a 2-letter alphabet. In addition, a histogram of the spectrum of the Schreier graph adjacency operator corresponding to the ninth-level action is displayed.

The Schur complement method, initially employed for computing the Laplacian spectrum on fractals, as discussed in \cite{kigami2001analysis} and \cite{lindstrom1990brownian}, was further applied by Grigorchuk \cite{grigorchuk2006self} to explore the spectra of automaton groups. However, it is important to note that the Schur complement method is not always usable, for instance see \cite{grigorchuk2006self} Example 23.

Ironically, this issue also pertains to our current situation when utilizing the Schur complement method to compute the Schreier spectrum of our automaton group $G:=\langle a^{\pm 1}, b^{\pm 1}, c^{\pm 1}, d^{\pm 1} \rangle$, we encounter:
\begin{center}
$M_{n}-\Gamma I_{n}= a_{n}+ b_{n}+ c_{n}+  d_{n}-\Gamma I_{n} = 
\begin{pmatrix}
2 a_{n+1}-(\Gamma -1)I_{n-1} & 1 \\
d_{n-1}  &   a_{n-1}+  b_{n-1}+  c_{n-1}-\Gamma  I_{n-1}
\end{pmatrix}$
\end{center}
where $I_{n}$ denotes $d^{n}\times d^{n}$ the identity matrix.
In the sequel, we have to determine Schur's components:
\begin{center}
$S_{1}(M_{n}-\Gamma I)=2 a_{n-1}-(\Gamma -1)-( a_{n-1}+  b_{n-1}+  c_{n-1}-\Gamma  I_{n-1})^{-1}d_{n-1}$;
$S_{2}(M_{n}-\Gamma I)=a_{n-1}+  b_{n-1}+  c_{n-1}-\Gamma  I_{n-1}- d_{n-1}(2 a_{n-1}-(\Gamma -1))^{-1}$
$M_{n}-\Gamma I=\begin{pmatrix}
2 a_{n-1}-(\Gamma -1)-( a_{n-1}+  b_{n-1}+  c_{n-1}-\Gamma  I_{n-1})^{-1}d_{n-1} & 0 \\
0  &   a_{n-1}+  b_{n-1}+  c_{n-1}-\Gamma  I_{n-1}
\end{pmatrix}$,
$M_{n}-\Gamma I=\begin{pmatrix}
2 a_{n-1}-(\Gamma -1)I_{n-1} & 0 \\
0  &   a_{n-1}+  b_{n-1}+  c_{n-1}-\Gamma  I_{n-1}- d_{n-1}(2 a_{n-1}-(\Gamma -1)I)^{-1}
\end{pmatrix}$
\end{center}
However, Schur components are only sometimes definable because $ a_{n-1}+  b_{n-1}+  c_{n-1}-\Gamma  I_{n-1}$ or $(2 a_{n-1}-(\Gamma -1)$ is not always invertible. The operator $M_{n}-\Gamma I$ is invertible if and only if simultaneously $S_{1}(M_{n}-\Gamma I)$ and $a_{n-1}+  b_{n-1}+  c_{n-1}-\Gamma  I_{n-1}$, or equivalently, $S_{2}(M_{n}-\Gamma I)$ and $2 a_{n-1}-(\Gamma -1)I_{n-1}$ are invertible. Therefore to determine the spectrum recursively, one has to solve the following;
\begin{center}
$\det (M_{n}-\Gamma I)=\det(2 a_{n-1}-(\Gamma -1)-( a_{n-1}+  b_{n-1}+  c_{n-1}-\Gamma  I_{n-1})^{-1}d_{n-1})\det(a_{n-1}+  b_{n-1}+  c_{n-1}-\Gamma  I_{n-1})$ \\
$=\det(2 a_{n-1}-(\Gamma -1)I_{n-1})\det(a_{n-1}+  b_{n-1}+  c_{n-1}-\Gamma  I_{n-1}- d_{n-1}(2 a_{n-1}-(\Gamma -1)I)^{-1})$
\end{center} 
Nevertheless, we turn to the numerical approximation of the Laplacian spectrum on the limit Schreier graph $\hat{\Gamma}(G, S, A^{-\mathbb{N}})$. The following proposition that is proved by \cite{grigorchuk1999asymptotic} gives an insight into the Laplacian spectrum of limit Schreier graph $\hat{\Gamma}$, a Hausdorff limit of the family of finite Schreier graphs $\lbrace\Gamma_{n}(G, S, A^{n})\rbrace$.

\begin{prop}\label{prop1}\cite{grigorchuk1999asymptotic}
Let $\lbrace(X_{n}, v_{n})\rbrace$ be a sequence of marked graphs that converges to the marked graph $(X, v)$. Then, for every $\Gamma\in spec(\Delta_{X})$ in the spectrum of the random walk’s operator on $X$, and every $\epsilon> 0$, there is an integer $N$ such that for every $n> N$, there is a spectral value $\Gamma_{n}$ of
$\Delta_{X_{n}}$ which is in the interval.
$(\Gamma-\epsilon , \Gamma+\epsilon)$.
\end{prop}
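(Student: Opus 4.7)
The plan is to prove this by constructing, for each $\lambda\in\mathrm{spec}(\Delta_{X})$, an approximate eigenvector with finite support in $X$ and then transplanting it to the approximating graphs $X_{n}$ via the local isometries guaranteed by Hausdorff-Gromov convergence. First I would invoke Weyl's criterion: since $\Delta_{X}$ is a bounded self-adjoint operator on $\ell^{2}(X)$, $\lambda$ lies in its spectrum exactly when there is a sequence of unit vectors $f_{k}$ with $\|\Delta_{X}f_{k}-\lambda f_{k}\|\to 0$. By density of finitely supported functions together with boundedness of $\Delta_{X}$, one can select a finitely supported $g$ with $\|g\|=1$ and $\|\Delta_{X}g-\lambda g\|<\epsilon/2$. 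Since $X$ is connected, the support of $g$ sits inside some ball $B_{X}(v,R)$.

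Next I would exploit the $1$-locality of the Markov operator: the value $(\Delta_{X}g)(x)$ depends only on $g$ and on the adjacency pattern in the $1$-neighborhood of $x$, so $\Delta_{X}g$ is supported in $B_{X}(v,R+1)$ and is completely determined by the subgraph spanned by $B_{X}(v,R+2)$. By the definition of convergence recalled just above the proposition, for all sufficiently large $n$ there is a bijective marked-graph isometry $\phi_{n}\colon B_{X_{n}}(v_{n},R+2)\to B_{X}(v,R+2)$. Setting $g_{n}(y):=g(\phi_{n}(y))$ for $y\in B_{X_{n}}(v_{n},R)$ and $g_{n}(y):=0$ otherwise produces a unit vector in $\ell^{2}(X_{n})$.

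The core computation is a direct check that $\|\Delta_{X_{n}}g_{n}-\lambda g_{n}\|=\|\Delta_{X}g-\lambda g\|<\epsilon/2$. For $y\in B_{X_{n}}(v_{n},R+1)$ the identity $(\Delta_{X_{n}}g_{n})(y)=(\Delta_{X}g)(\phi_{n}(y))$ follows from the fact that $\phi_{n}$ preserves the $1$-ball around $y$; for $y$ at distance at least $R+2$ from $v_{n}$, neither $g_{n}(y)$ nor $g_{n}(y')$ for any neighbor $y'$ is nonzero, so $\Delta_{X_{n}}g_{n}$ vanishes there. One then concludes by the standard corollary of the spectral theorem: an approximate eigenvector of a bounded self-adjoint operator with residual strictly less than $\epsilon$ forces a genuine spectral value within $\epsilon$ of $\lambda$, yielding the desired $\lambda_{n}\in\mathrm{spec}(\Delta_{X_{n}})\cap(\lambda-\epsilon,\lambda+\epsilon)$.

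The main subtlety I anticipate is the bookkeeping around "collar" radii: one must arrange the radius used for the Hausdorff-Gromov isometry (here $R+2$) to exceed the support radius of $g$ (here $R$) by enough to cover the $1$-neighborhoods of every vertex at which $g_{n}$ or its Laplacian can fail to vanish. Uniform boundedness of vertex degrees along the sequence, which is part of the marked-graph convergence hypothesis, plays a double role: it ensures that $\Delta_{X_{n}}$ is uniformly bounded and makes the locality computation above rigorous. No analytic machinery beyond Weyl's criterion is needed.
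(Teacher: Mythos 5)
The paper does not prove Proposition \ref{prop1}; it is quoted verbatim from the cited reference \cite{grigorchuk1999asymptotic}, so there is no in-text argument to compare against. Your transplantation proof is correct and is essentially the standard argument behind the cited result: Weyl's criterion gives a finitely supported unit approximate eigenvector $g$ with residual $<\epsilon/2$, the $1$-locality of the (normalized) Markov operator plus the eventual isomorphism of the balls $B_{X_n}(v_n,R+2)\cong B_X(v,R+2)$ lets you copy $g$ to $X_n$ with the same residual (your collar of width $2$ is exactly what is needed so that degrees and adjacencies agree at every vertex where $g_n$ or $\Delta_{X_n}g_n$ can be nonzero), and the bound $\mathrm{dist}(\lambda,\mathrm{spec}(A))\le\|(A-\lambda)f\|/\|f\|$ for self-adjoint $A$ finishes the argument. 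The only point worth stating explicitly is that the balls must be identified as labeled multigraphs (so that multiplicities, loops and hence degrees match), which is what the marked-graph convergence in \cite{grigorchuk1999asymptotic} provides.
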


The proposition above ensures that the spectrum of the Schreier graph on each level accurately approximates the spectrum of the Schreier graph $\hat{\Gamma}$ on the boundary, denoted as $spec(\hat{\Delta})=\bigcup_{n>0} spec(\Delta_{n})$. In accordance with Proposition \ref{prop1}, we have approximated the numerical spectrum of the Laplacian operator $\Delta_n$ for specific values of $n$, specifically, $n=7, 9, 12$ (refer to Figures \ref{The Schreier graph 7}, \ref{The Schreier graph 9}, \ref{The Schreier graph 12}).

Let $\Gamma(M_{n})$ denote the spectral radius of $M_{n}$. It is easy to see that:
\begin{center}
$\lim_{i\rightarrow\infty}\sqrt[2i]{p^{2 i}(x , x)}=\Gamma(M_{n}) = \Vert M_{n}\Vert_{l^{2}(A^{n}, deg)}$
\end{center}
For any vertex $x \in A^{n}$, where $\sqrt[2i]{p^{2 i}(x , x)}$ represents the probability of transitioning from $x$ to $x$ in $2i$-steps. Specifically, $\Vert M_{n}\Vert\leq 1$, and the value $\Gamma(M_{n})$ lies within the spectrum of $M_{n}$, i.e., $[-1, 1]$. Since $A^{n}$ is a finite set, then $\Vert M_{n}\Vert =1$, and $1$ is the largest eigenvalue of $M_{n}$. For a finite connected graph $\Gamma_{n}$, we denote the second eigenvalue after 1, arranged in a natural decreasing order of points in the spectrum of $M_{n}$, by $\Gamma(M_{n})$.
Since the Markov operator is  self-adjoint and bounded operator,
by applying the spectral theorem one has the spectral decomposition $M_{n}=\int_{-1}^{1}\Gamma d E_{\Gamma_{n}}(\Gamma)$, where $E_{n}$ is the spectral measure associated with $M_{\Gamma_{n}}$. 
\begin{center}
$\mu_{x_{n}, y_{n}}^{\Gamma_{n}}(B)=\langle E_{\Gamma_{n}}(B) \delta_{x_{n}} , \delta_{y_{n}} \rangle$
\end{center}
Where $\delta_{x_{n}}$ is Dirac delta function and $x_{n},y_{n}\in A^{n}$.
R.Grigorchuk \cite{grigorchuk1999asymptotic} showed the spectral measure $\mu_{x_{n}, y_{n}}^{\Gamma_{n}}$ weakly converges to spectral measure $\mu_{x, y}^{\hat{\Gamma}}$, i.e., 
\begin{center}
$\lim_{n\rightarrow\infty}\bigints_{-1}^{1} f \mu_{x_{n}, y_{n}}^{\Gamma_{n}}=\bigints_{-1}^{1} f \mu_{x, y}^{\hat{\Gamma}}$
\end{center}
for any $f\in C[-1 ,1]$. Consequently, the spectrum of any bounded operator $T_{n}\in \mathcal{B}( l^{2}(A^{n}))$ on $n$-th level Schreier graphs weakly converges to the spectrum of a bounded operator on limit Schreier graph $\tilde{T} \in \mathcal{B}( L^{2}(A^{-\mathbb{N}}, \nu)))$;
\begin{center}
 $\sigma(T_{n})\rightarrow \sigma(\tilde{T})$
\end{center}

  \begin{figure}[h!]
  \begin{subfigure}[a]{0.3\linewidth}
  \centering
    \includegraphics[width=1\textwidth]{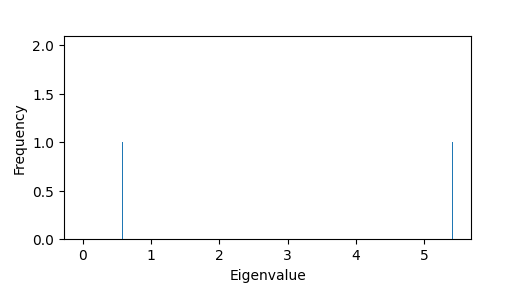}
 
  \end{subfigure}
  \begin{subfigure}[a]{0.3\linewidth}
  \centering
    \includegraphics[width=1\textwidth]{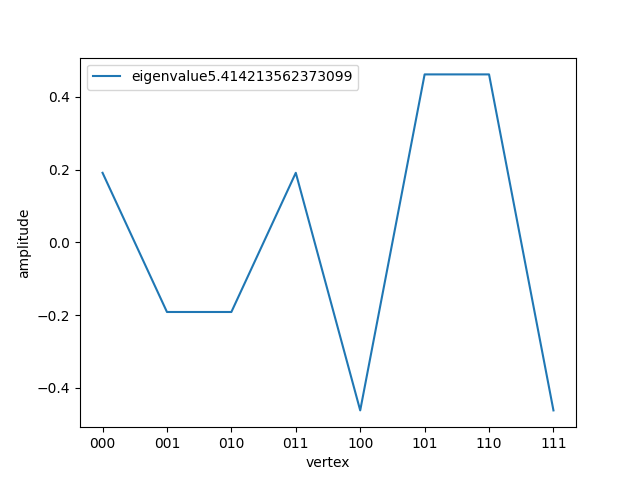}
    \caption{level 3}
  \end{subfigure}
\begin{subfigure}[b]{0.35\linewidth}\centering
    \includegraphics[width=1\textwidth]{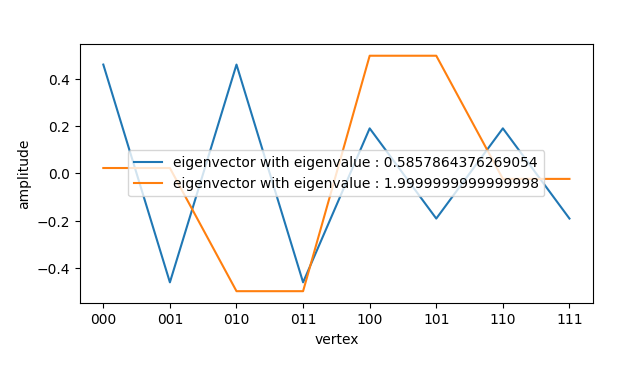}

  \end{subfigure}
    \begin{subfigure}[a]{0.3\linewidth}
  \centering
    \includegraphics[width=1\textwidth]{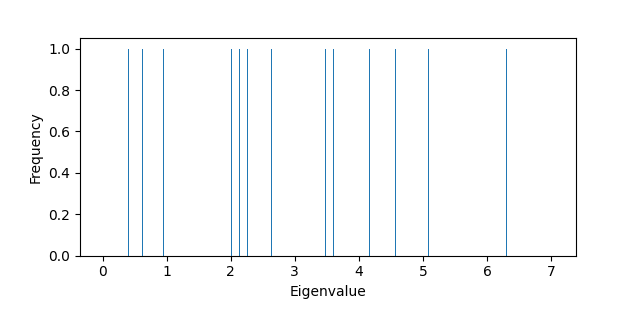}

  \end{subfigure}
  \begin{subfigure}[a]{0.3\linewidth}
  \centering
    \includegraphics[width=1\textwidth]{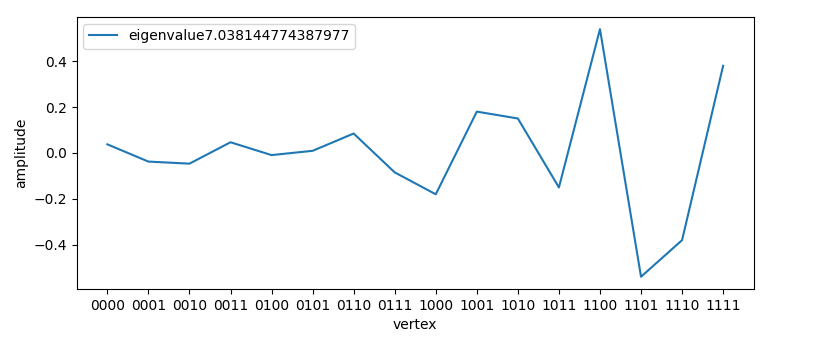}
    \caption{level 4}
  \end{subfigure}
\begin{subfigure}[b]{0.3\linewidth}\centering
    \includegraphics[width=1\textwidth]{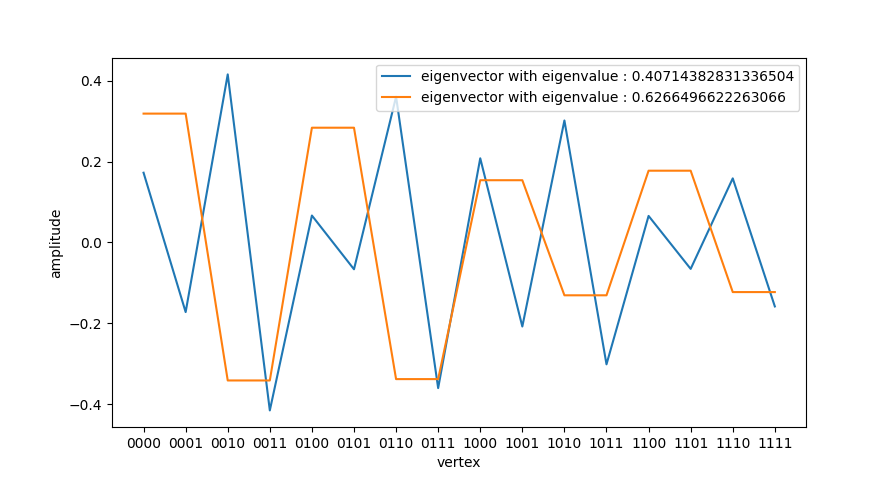}
  \end{subfigure}
    \begin{subfigure}[a]{0.3\linewidth}
  \centering
    \includegraphics[width=1\textwidth]{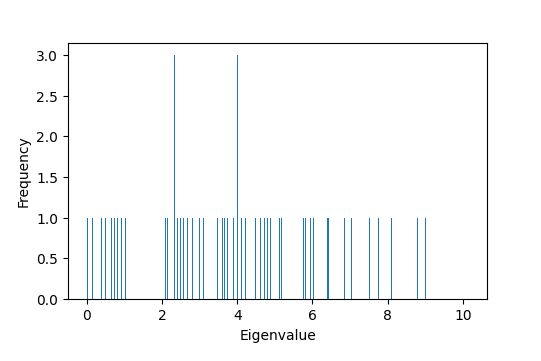}
  \end{subfigure}
    \begin{subfigure}[a]{0.3\linewidth}
  \centering
    \includegraphics[width=1\textwidth]{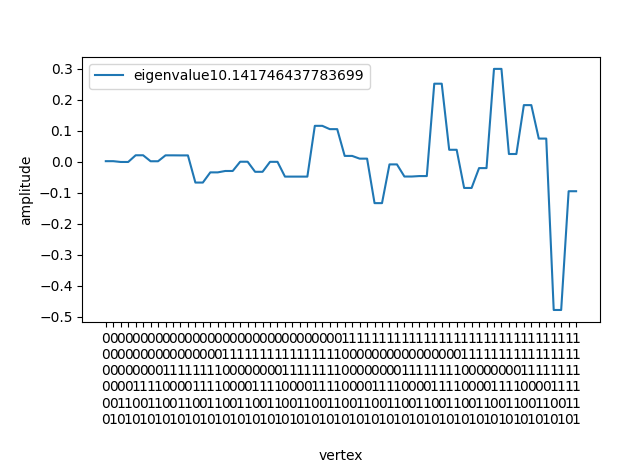}
    \caption{level 6}
  \end{subfigure}
\begin{subfigure}[b]{0.3\linewidth}\centering
    \includegraphics[width=1.3\textwidth]{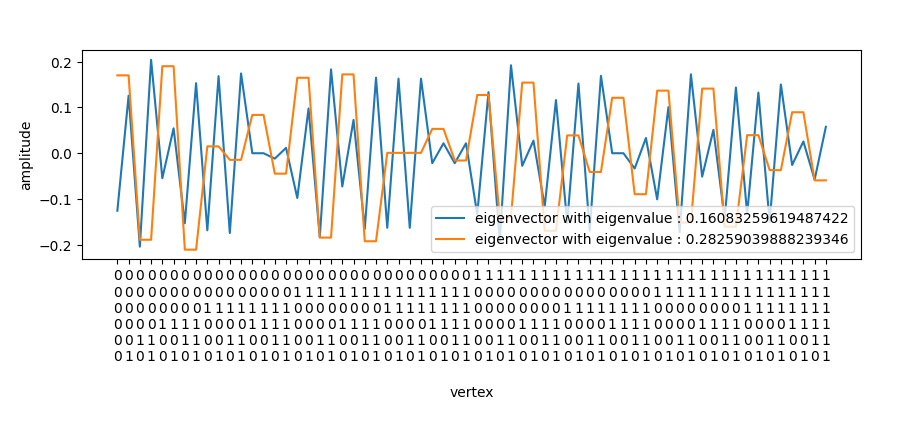}
     
  \end{subfigure}
  \caption
{From left to right, the Laplacian spectrum, the eigenvector of the largest eigenvalue, and eigenvectors for the two smallest eigenvalues at three levels 3,4,6.}
  \label{smallest eigenvalues}
\end{figure}

\begin{figure}
 \centering
    \includegraphics[width=0.5\linewidth]{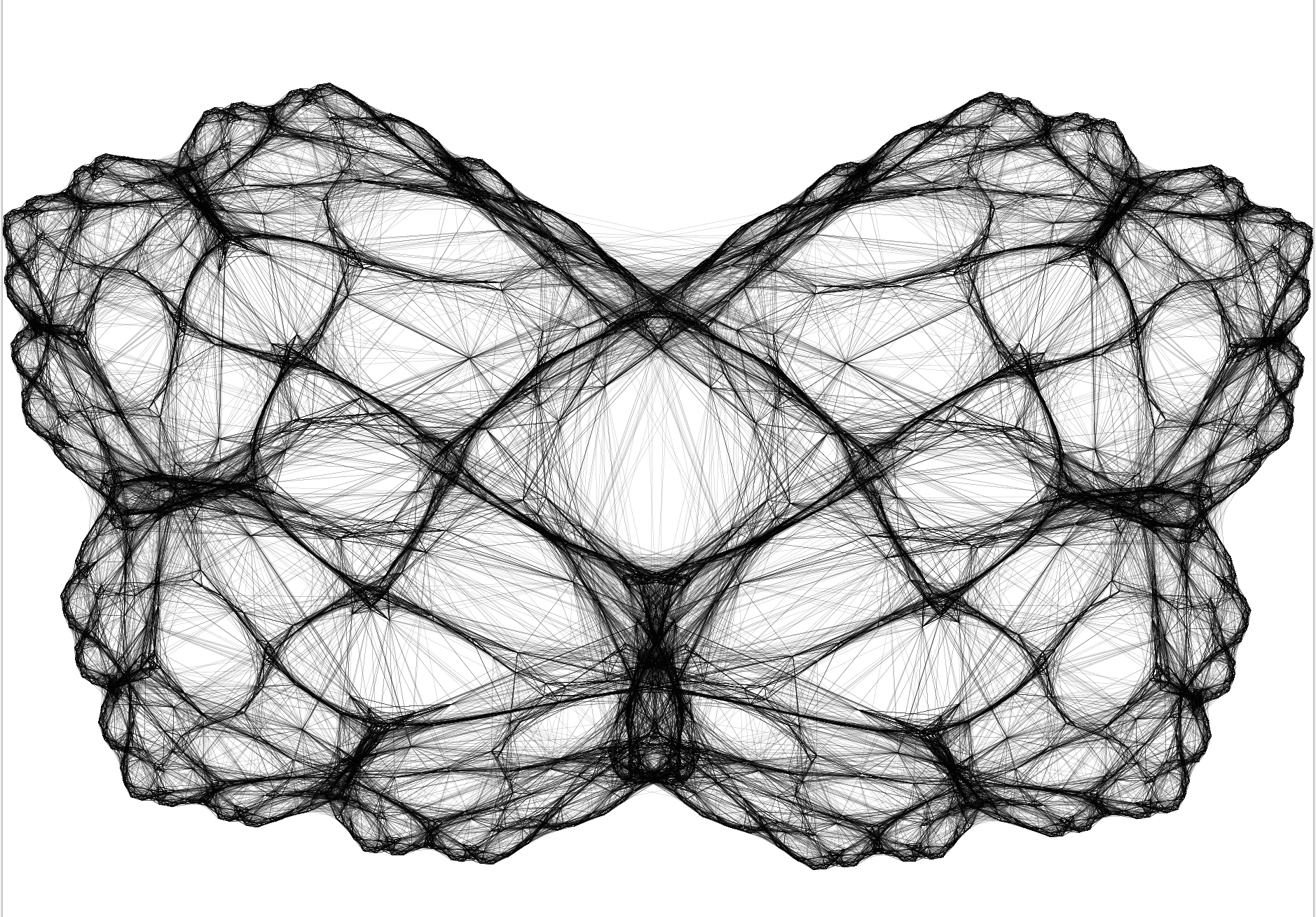}
     \caption{The simplicial Schreier graph of group $G$ is generated by automaton \ref{Moore diagram} at level 12.}
  \label{approximation}
\end{figure}

\section{The group computation} \label{The group computation}

In this section, some details of the main group (defined by automata \ref{Moore diagram}) is demostrated. by finding elements belongs to stabilizer, rigid-stabilizer.
Let consider ball $B(e, r)$ of radius $r$ centered on the group’s identity element, by observing their action on different levels of the binary rooted tree $A^{*}$. The ball $B(1, r)$ is preserved under the virtual endomorphism that determines the self-similarity of the group. We have considered the group’s elements with $r\leq 12$. In addition, the portrait of some elements of stabilizer subgroup  is presented.
\begin{center}
$\phi_{n}: G\rightarrow Sym(A^{n})\wr G$\\
$\phi_{n}(g)=\sigma (\phi_{1}(g), \cdots  \phi_{d^{n}}(g))$, for $\sigma\in Sym(A^{n})$,
\end{center}

\begin{nota}
Despite some results of our group's calculation can also reached by use of GAP program's Automgrp package \cite{muntyan2008automgrp}
, nevertheless, all computation of the automaton group $G$ in peresent papre, involving Schreier graphs and its spectra, Moore graphs, elements of subgroups and their portrates, and the group's relations,  is executed by the the calculator designed by the first author. The codes of the calculator is written in Python programing language\footnote{for the purpse of relibality some results is compared with output of GAP program}.
\end{nota}

\subsection{subgroups}
\subsubsection{Level-stablizer subgroups}
By definition of the main group $G:=\langle a^{\pm 1}, b^{\pm 1}, c^{\pm 1}, d^{\pm 1} \rangle$ it is easy to see  that element $b,c,d$ fix the first level, and $d$ is an element of the second level stablizer $d\in St(2)$

\begin{itemize}
\item
$Stab_{G}(1$-th$)=\langle b, c, d, a^{2},ac^{\pm 1}a, a b^{\pm 1}a , a d^{\pm 1}a  \rangle^{G}$\\
\item
$Stab_{G}(2$-th$)= \langle d$ , $  a^{2}$ , $ b^{2} $ , $ c^{2}$ ,  $ ad^{\pm 1}a$ , $ bd^{\pm 1}b $ , $ cd^{\pm 1}c$ , $ab^{\pm 2}a$ ,
$ cb^{\pm 2}c$ , $ba^{\pm 2}b$ ,$ ca^{\pm 2}c$ , $ad^{\pm2}a$ ,  $ cd^{\pm 2}c$  ,  $ bc^{\pm 2}b$ ,    $bd^{\pm 2}b$ ,
$ [a , c^{\pm 1}]$ , $[a , d^{\pm 1}]  $ ,  $[ b , c^{\pm 1}]$ , 
$[b , d^{\pm 1}] $, $[c , d^{\pm 1}] \rangle^{G} $
\item 
$Stab_{G}(3$-th$)=\langle a^{2}$ , $ b^{2} $ , $ c^{2}$ , $ d^{2}$ ,  $ (ac^{\pm 1})^{2}$ , $ (bc^{\pm 1})^{2}$ , $ (bd^{\pm 1})^{2}$ ,\\
 $ [a , c^{\pm}]$ ,
$[b , c^{\pm 1}]$ , $ [b , d^{\pm 1}] \rangle^{G}$
\end{itemize}

Due to the wreath recurssion $\phi_{n} :G\rightarrow Sym(A^{n})\wr G$ , $g\mapsto (\sigma_{n}(g), ( g\vert_{x})_{x\in A^{n}})$, $\sigma_{n}(g)\in Sym(A^{n})$, if the element
$g\in G$ belongs to $n$-th-level-stablizer subgroup $g$ acts by trivial permutation on the level $n$-th and can be written by $(1 , ( g\vert_{x})_{x\in A^{n}})$, in addition, $g\in Stab(n+1)$ if and only if $g\vert_{x}$  act trivially on the alphabet set $A$, i.e., $g\vert_{x}\in Stab(1)$, for all $x\in A^{n}$. According to the inherent structure of the automata $\Pi=(G , A)$ the condition satisfied when $g\in \langle b, c, d, a^{2},ac^{\pm 1}a, a b^{\pm 1}a , a d^{\pm 1}a  \rangle^{G}$.
In squle by induction, we can see that :
\begin{flushleft}

$\phi_{2} (d^{2^{n}}) = (1, (1, 1, a^{2^{n}}, c^{2^{n}}))$\\
$\phi_{3} (a^{2^{n}})= (1, (1, 1, a^{2^{n-1}}, c^{2^{n-1}}, 1, 1, a^{2^{n-1}}, c^{2^{n-1}}))$\\

$\phi_{4}(c^{2^{n}})= (1, (1, 1, a^{2^{n-1}}, c^{2^{n-1}}, 1, 1, a^{2^{n-1}}, c^{2^{n-1}}, 1, 1, a^{2^{n-1}}, c^{2^{n-1}}, 1, 1, a^{2^{n-1}}, c^{2^{n-1}}))$\\
$\phi_{4} (b^{2^{n}}) = (1, (1, 1, a^{2^{n-1}}, c^{2^{n-1}}, 1, 1, a^{2^{n-1}}, c^{2^{n-1}}, 1, b^{2^{n-1}}, 1, b^{2^{n-1}}, 1, b^{2^{n-1}}, 1, b^{2^{n-1}}))$
\end{flushleft}
Therefore elements $a^{2^{n}}, b^{2^{n}}, c^{2^{n}}, d^{2^{n}}$, generate stablizer-subgroups $Stab(3n + k)$, for $k= 0, 1, 2$ respectivelly. Pricisely, for $n\geq 1$ one have :  
\begin{align}\label{eq-stablizer}
a^{2^{n}}\in Stab(3 n )\setminus Stab(3n+1) , \\
c^{2^{n}} , \quad b^{2^{n}}\in Stab(3 n+1 )\setminus Stab(3n+2) , \\
d^{2^{n}}\in Stab(3 n + 2 )\setminus Stab(3(n+1)) , 
\end{align}

\subsubsection{Rigid-stablizer subgroups}
\begin{itemize}
\item $Rigid(0)= \langle d^{a^{\pm 1}}\rangle^{Stab(1)}$ 
%\begin{flushleft}
%$\phi (ada^{-1}) = (1, (b, 1))$, \qquad
%$\phi (a^{-1}da) =(1, (b^{d^{-1}} , 1))$.
%\end{flushleft}
\item $Rigid(1)=\langle d, c^{-1}b, cb^{-1}, bdc^{-1}, bc^{-1}d, dc^{-1}b, c^{-1}db, cb^{-1}d^{-1}, bd^{-1}b^{-1},b^{-1}d^{-1}c, cdb^{-1}, c^{-1}d^{-1}b \rangle^{Stab(1)}$
%\begin{flushleft}
%$\phi (bdc^{-1})=(1 ,(1, cba^{-1}))$  , \qquad
%$\phi (bc^{-1}d)=(1 ,(1, ca^{-1}b))$  , \qquad\;
%$\phi (dc^{-1}b)= (1 ,(1, ba^{-1}c))$  ,\\
%$\phi (c^{-1}db)= (1 ,(1, a^{-1}bc)) $ , \qquad
%$\phi (cb^{-1}d^{-1})= (1 ,(1, ac^{-1}b^{-1})) $  , \;
%$\phi (bd^{-1}b^{-1})= (1, (1, cb^{-1}c^{-1}))  $ ,\\ 
%$\phi (bc^{-1}d^{-1})= (1, (1, ca^{-1}b^{-1})) $ , 
%$\phi (b^{-1}d^{-1}c)= (1, (1, c^{-1}b^{-1}a))  $ ,\quad
%$\phi (c^{-1}d^{-1}b)= (1, (1, a^{-1}b^{-1}c)) $, \\
%$\phi (cdb^{-1})= (1, (1, abc^{-1}))$ .
%\end{flushleft}

\end{itemize}

%\begin{center}
%$[b^{-1} , c^{-1}]=( 1 , 1, [a^{-1}, d^{-1}], 1 )$
%$[c, b]=( 1 , 1, 1, [d , a] )$
%$[b^{-1}, d]=( 1 , 1, 1, [a^{-1} , c] )$
%\end{center}
\begin{itemize}
\item 
$Rigid(00)= \langle [ac , ac^{-1}], [ac , c^{2}], [ac^{-1} , c^{2}] \rangle^{Stab(2)}$
%\begin{center}
%$\phi([ac , ac^{-1}]= [a, c^{-1}] [a, c]^{c^{-1}})=  (1, ([d , b], 1, 1, 1))$ \\
%$\phi([ac , c^{2}]= [a, c^{a}]^{c^{2}})= \phi([ac^{-1} , c^{2}]=[[a, c], c^{-1}]^{2})= (1, ([b , d], 1, 1, 1))$ \\
%\end{center}

\item $Rigid(01)=\langle [a, c^{2}], [a^{2} , da], [ad , a^{-1}d], [ca , c^{2}], [ca , a^{-1}c]\rangle^{Stab(2)}$
%\begin{flushleft}
 %$\phi ([a^{2} , da]) = (1, (1, [b , c]^{b}, 1, 1))$ \\
% $\phi ([a , c^{2}]) = \phi ([ad , a^{-1}d] = [a, d]^{a}[a, d]) =\phi ([ca , a^{-1}c]=[c^{-1} , [a^{-1} , c]][a , c]) $\\
 %$=\phi ([ca , c^{2}]=[a , c][a , c]^{c})=(1, (1, [b , d], 1, 1))$ 
%\end{flushleft}
\item $Rigid(10)=\langle [b , c], [b , bc], [b , cb], [b , dc], [b , b^{-1}c], [c , b^{2}], [c , cb], [b^{2} , bc], [b^{2} , cb], [b^{2} , dc], [b^{2} , b^{-1}c], [c^{2} , a^{-1}c]\rangle^{Stab(2)}$
%$\phi([b , bc]=[b , b^{-1}c]=[cb, c]=[b , c])= (1, [1, 1, [a, d], 1])$\\ 
%$\phi([b , cb]=[b , c]^{b} )= (1, [1, 1, [a, d]^{a}, 1])$\\ 
%$\phi([b , dc]=[b , c][b , d]^{c} )= (1, [1, 1, [a, dc], 1])$\\
%$\phi([b^{\pm 1}c , b^{2}]=[c , b^{2}]=[c , b][c , b]^{b})=(1, [1, 1, [d, a^{2}], 1])$\\
%$\phi([b^{2} , cb]=[b^{2} , c]^{b} )= (1, [1, 1, [a^{2} , d]^{a}, 1])$\\
%$\phi([b^{2} , dc]=[b^{2} , c][b^{2} ,d]^{c} )= (1, [1, 1, [d^{-1} , b]^{d}, 1])$\\
%$\phi([c^{2} , a^{-1}c] )= (1, [1, 1, [d^{-1} , b]^{d}, 1])$\\
%$\phi([b , c^{-1}] )= (1, [1, 1, [a^{-1} , d]^{a}, 1])$\\
%$\phi([b , c^{-1}d]=[b, d][b , c^{-1}]^{d})= (1, [1, 1, [a^{-1} , d][a^{-1} , c^{-1}]^{d}, 1])$\\
%$\phi([b^{2} , bc^{-1}]=[b^{2} , c]^{b})= (1, [1, 1, [a^{-2} , d]^{a^{-1}}, 1])$\\
%$\phi([b^{2} , c^{-1}d] = [b^{2} , d][b^{2} , c^{-1}]^{d})=(1, [1, 1, [a^{-2} , d][a^{-2} , c^{-1}]^{d}, 1])$\\
%$\phi([a , c^{2}])= (1, [1, 1, [b , d^{-1}], 1])$\\

\item $Rigid(11)=\langle
[b , d], [b , bd], [b , bc^{-1}], [b , bd^{-1}], [b , db], [b , d^{2}], [b , b^{-1}d], [b , c^{-1}d], [c , bc], [c , bc^{-1}], [c , b^{-1}c], [d , a^{2}],$\\
$ [d , b^{2}], [d , bd], [d , bd^{-1}], [d , db], [d , b^{-1}d], [a^{2} , ad], [a^{2} , ad^{-1}], [a^{2} , d^{2}], [a^{2} , a^{-1}d], [ad^{-1} , da], [b^{2} , bd],$\\
$ [b^{2} , bc^{-1}], [b^{2} , bd^{-1}], [b^{2} , db], [b^{2} , d^{2}], [b^{2} , b^{-1}d], [b^{2} , c^{-1}d], [bc , b^{-1}c], [bd , bd^{-1}], [bd , db], [bd , d^{2}], [bd , b^{-1}d], $\\
$[bc^{-1} , cb], [bd^{-1} , c^{2}], [bd^{-1} , db], [bd^{-1} , d^{2}], [bd^{-1} , b^{-1}d], [c^{2} , b^{-1}d], [db , d^{2}], [db , b^{-1}d], [d^{2} , b^{-1}d] \rangle^{Stab(2)}$

\end{itemize}

\subsubsection{Portrate of group's elements}
In this part, the portrate $\phi_{n}:G \rightarrow Sym(A^{n})\wr G$ of a collection of level-stablizer-subgroup's element on the corresponding level is presented:

\begin{center}

$b^{-1}c^2b \mapsto (1, (1, 1, a, c, 1, 1, a, c, a, b^{-1}cb, 1, 1, a, b^{-1}cb, 1, 1))$\\
$(c^{4})^{b}\mapsto (1, (1, 1, a^{2}, c^{2}, 1, 1, a^{2}, c^{2}, a^{2}, (c^{2})^{b}, 1, 1, a^{2}, (c^{2})^{b}, 1, 1]))$\\
$(b^{2})^{c}\mapsto (1, (1, 1, a, c, 1, 1, a, c, 1, b, 1, b^{c}, 1, b, 1, b))$\\
$bcbc^{-1}\mapsto (1, ( 1, b, 1, b, d, d, bd, db^{-1}))$ ,
$[c, b^{-1}] \mapsto (1, (1, 1, 1, 1, 1, 1, d^{-1}b^{-1}, b))$ , \\
$[ac , ac^{-1}] = [a, c^{-1}]^{c} [c, a]^{c^{-1}} \mapsto (1, (1, 1, 1, [c, a], 1, 1, 1, 1, 1, 1, 1, 1, 1, 1, 1, 1))$\\
$[ac , c^{2}] =  [a, c^{2}]^{c} \mapsto (1, (1, 1, 1,  [a, c]^{c}, 1, 1, 1, 1, 1, 1, 1, 1, 1, 1, 1, 1))$\\
$[a^{2} , da] \mapsto (1, (1, 1, 1, 1, 1, 1, [a, d]^{a}, 1, 1, 1, 1, 1, 1, 1, 1, 1))$\\
$[c , b^{2}] \mapsto (1, (1, 1, 1, 1, 1, 1, 1, 1, 1, 1, 1, [c, b], 1, 1, 1, 1))$\\
$[b , d^{2}] \mapsto (1, (1, 1, 1, 1, 1, 1, 1, 1, 1, 1, 1, 1, 1, [b, d], 1, 1))$\\
$[d , a^{2}] \mapsto (1, (1, 1, 1, 1, 1, 1, 1, 1, 1, 1, 1, 1, 1, 1, [d, a], 1))$\\
$(b^{-1}c)^{2}\mapsto (1, ( 1, 1, 1, 1, d^{-1}, d^{-1}b, d^{-1}b, d^{-1}))$ ,\\
$a^{2}\mapsto (1, (1, 1, a, c, 1, 1, a, c))$ , 
$(db^{-1})^{2}\mapsto (1, (1, b^{-1}, 1, b^{-1}, 1, 1, ad^{-1}a, a^{2}d^{-1}))$ , \\
$ba^{2}b^{-1}\mapsto (1, (a, c, 1, 1, 1, 1, c, dad^{-1}))$ ,\\
$db^{-1}db\mapsto(1, (1, 1, 1, 1, d, d, ad^{-1}ad, a^{2}))$ , \\
$[b^{\pm 1}, d^{\pm 1} ] \mapsto (1, ( 1, 1, 1, 1, 1, 1, [d^{\pm 1}, a^{\pm 1}], 1))$ , \\
$b^{2}\mapsto (1, ( 1, 1, a, c, 1, 1, a, c, 1, b, 1, b, 1, b, 1, b))$\\
$(c^{2})^{b}\mapsto (1, ( 1, 1, a, c, 1, 1, a, c, a, c, 1, 1, a, c, 1, 1))$ , \\
$c^{2}\mapsto(1, (1, 1, a, c, 1, 1, a, c, 1, 1, a, c, 1, 1, a, c))$ ,\\
$b^{2}\mapsto (1, (1, 1, a, c, 1, 1, a, c, c^{2}, a^{2}, d, d, 1, b, 1, b))$ , \\
$(c^{2})^{a^{-1}} \mapsto(1, ( 1 ,1 ,a, c, 1, 1, a, c, c^{2}, a^{2}, d, d, b , 1, 1, b^{a^{-1}}))$ , \\
$(b^{2})^{a^{-1}} \mapsto (1, ( c^{2}, a^{2}, d, d, 1, b, 1, b, 1, 1, a, c, 1, 1, a^{c^{-1}}, c^{a^{-1}}))$ ,\\
$d^{2}\mapsto (1, (1, 1, 1, 1, 1, 1, 1, 1, 1, 1, 1, 1, 1, 1, 1, 1, 1, 1, a, c, 1, 1, a, c, c^{2}, a^{2}, d, d, 1, b, 1, b))$ , \\
$a^{4}\mapsto (1, (1, 1, 1, 1, 1, 1, 1, 1, 1, 1, 1, 1, 1, 1, 1, 1, 1, 1, a, c, 1, 1, a, c, c^{2}, a^{2}, d, d, 1, b, 1, b, 1, 1$\\$, 1, 1, 1, 1, 1, 1, 1, 1, 1, 1, 1, 1, 1, 1, 1, 1, a, c, 1, 1, a, c, c^{2}, a^{2} , d, d, 1, b, 1, b))$ ,\\
$[c^{\pm 2}, a^{-1}]\mapsto(1, (1, 1, 1, 1, 1, 1, 1, 1, 1, 1, 1, 1, 1, 1, [a^{\pm}, d^{-1}], 1, 1, 1, 1, 1, 1, 1, 1, 1, 1, 1, 1, 1, 1, 1, 1, 1))$\\
\end{center}

\subsection{relations}

Finally, the groups relation up to length 12 is the following:

\begin{flushleft}

$[d , d^{a^{\pm 1}}]$,
$[a^{2} , a^{c^{\pm1}}]$, 
$[bc^{-1} , d^{a^{\pm1}}]$, 
$[c^{2} , c^{b^{\pm1}}]$,
$[d^{2} , d^{a^{\pm1}}]$,  
$[b^{-1}c , d^{a^{\pm1}}]$, 
$[d^{a^{-1}} , d^{b^{\pm1}}]$,
$[d^{a^{-1}} , bdc^{-1}]$, 
$[d^{a^{-1}} , bc^{-1}d^{\pm 1}]$, 
$[d^{a^{-1}} , bd^{-1}c^{\pm1}]$, 
$[d^{a^{-1}} , d^{c^{\pm1}}]$,
$[d^{a^{-1}} , dc^{-1}b]$,
$[d^{a^{-1}} , b^{-1}cd]$, 
$[d^{a^{-1}} , b^{-1}d^{\pm1}c]$, 
$[d^{a^{-1}} , c^{-1}bd]$, 
$[d^{b^{\pm 1}} , d^{a^{\mp 1}}]$,
$[bdc^{-1} , d^{a}]$, 
$[bd^{-1}c^{-1} , d^{a}]$, 
$[d^{c^{\pm 1}} , d^{a}]$,
$[cb^{-1}d^{\pm 1} , d^{a}]$, 
$[db^{-1}c , d^{a}]$,
$[dc^{-1}b , d^{a}]$, 
$[d^{a} , d^{b}]$, 
$[d^{a} , b^{-1}d^{\pm1}c]$, 
 $[d^{a^{-1}} , bd^{\pm 1}c^{-1}]$, 
$[d^{a^{-1}} , cb^{-1}d^{\pm 1}]$, 
$[d^{a^{\pm 1}} , d^{3}]$, 
$[d^{a^{-1}} , db^{-1}c]$, 
$[d^{a^{\pm 1}} , b^{-1}cd]$,
$[d^{a^{\pm 1}} , b^{-1}dc]$,
$[d^{a^{\pm 1}} , d^{c}]$, 
$[bc^{-1}d^{\pm 1} , d^{a}]$,
$[cb^{-1}d^{\pm 1} , d^{a}]$,
$[d^{a^{\pm 1}} , c^{-1}bd]$,  
$[bc^{-1}d^{\pm 1} , d^{a}]$, 
$[d^{a} , b^{-1}d^{\pm 1}c]$, 
$[d^{a} , b^{-1}dc]$, 

\end{flushleft}

Where  $x^{y}=y^{-1} x y$ and $[x, y] = x^{-1}y^{-1}xy$.\\

\section{Appendix}\label{appendix}
In the continuation of the proof of Lemma \ref{contracting lemma}, we have to verfying the followin inclusion; for some positive integer $n_{0}$:
\begin{center}
$((S\cup S^{-1}) \otimes \mathcal{N})\vert_{A^{n}} \subseteq \mathcal{N}$, for any $n\geq n_{0}$ 
\end{center}
By calculating the image of group's element  $a x$ under virtual-endomorphism $\phi$, for each pair of group's
 elements $(a, x)$, where  $a\in  S$ and $x\in\mathcal{N}$ (is defined  \ref{nucleus}),
We observed that for $n_{0}=7$ the above inclusion (\ref{inclusion}) is hold, i.e., $a\otimes x\vert_{A^{7}}\in \mathcal{N}$.

\begin{wrapfigure}{r}{0.5\textwidth}
  \begin{center}
    \includegraphics[width=0.5\textwidth]{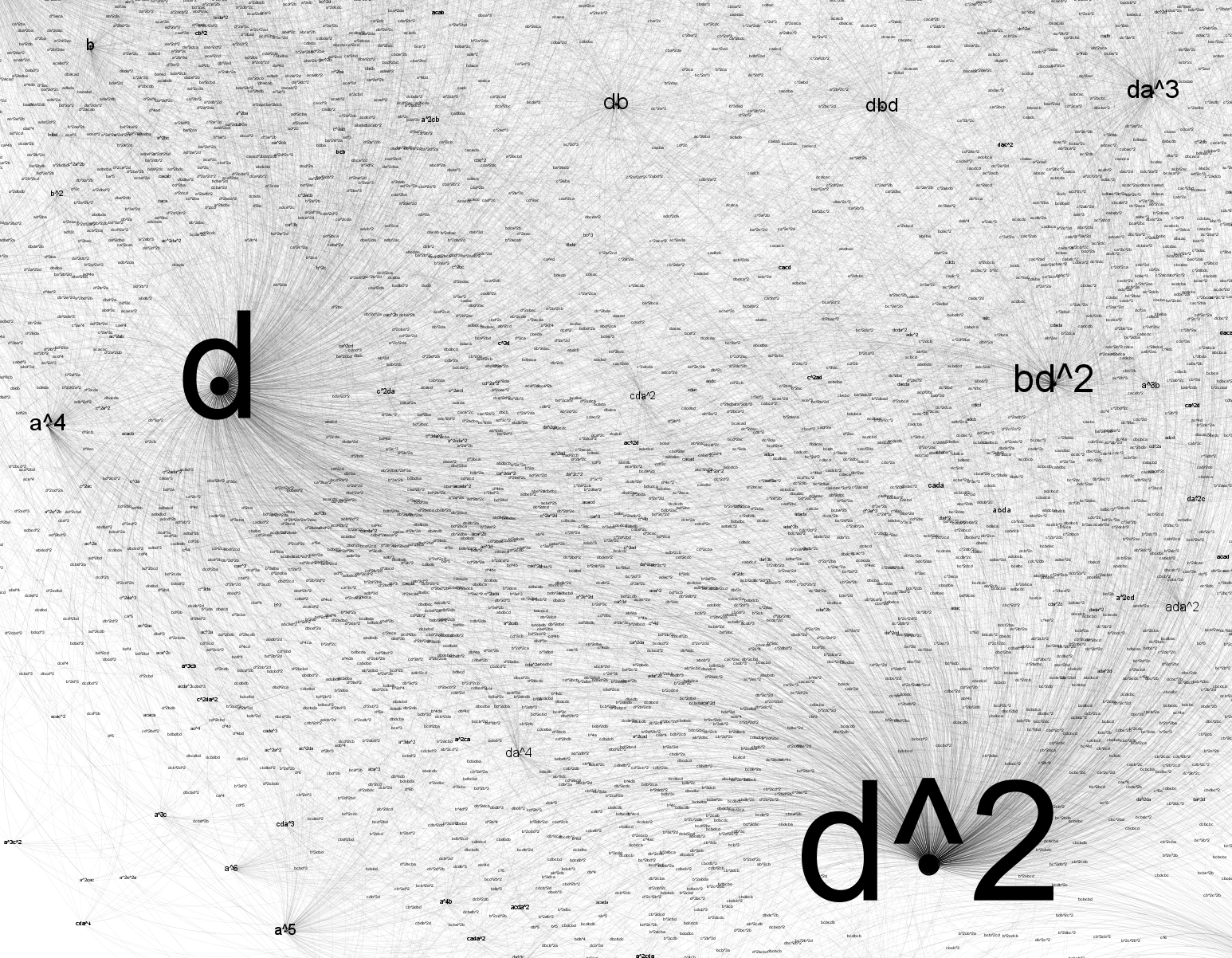}
  \end{center}
  \caption{ A portion of the Moore diagram of the automaton consists elements with length almost 4 on the level 3-th, $\tau : Q^{4}\times A^{3} \rightarrow  Q^{4}$}\label{The Moore graph of level3}
\end{wrapfigure}

$( a \otimes b )\vert_{A^{7}} = \lbrace 1, d, a, c, cb, b\rbrace$ , \\ 
$( a \otimes b^{-1} )\vert_{A^{7}} = \lbrace 1, d, a, c, a^{-1}, c^{-1}, b^{-1}, d^{-1}\rbrace$ , \\ 
$( a \otimes d )\vert_{A^{7}} = \lbrace 1, d, a, c, b\rbrace$ , \\ 
$( a \otimes d^{-1} )\vert_{A^{7}} = \lbrace 1, d, a, c, b^{-1}, d^{-1}, a^{-1}\rbrace$ , \\ 
$( a \otimes c^{-1} )\vert_{A^{7}} = \lbrace 1, d, a, c, a^{-1}, c^{-1}, b^{-1}\rbrace$ , \\ 
$( a \otimes da )\vert_{A^{7}} = \lbrace 1, b, d, a, a^{2}, ac, c\rbrace$ , \\ 
$( a \otimes bd )\vert_{A^{7}} = \lbrace 1, d, a, c, cb, b, da\rbrace$ , \\ 
$( a \otimes cb )\vert_{A^{7}} = \lbrace 1, a, c, b, d, cb, bd\rbrace$ , \\ 
$( a \otimes ac )\vert_{A^{7}} = \lbrace 1, d, a, c, cb, b\rbrace$ , \\ 
$( a \otimes d^{-1}b^{-1} )\vert_{A^{7}} = \lbrace 1, d, a, c, a^{-1}, c^{-1}, b^{-1}, d^{-1}, a^{-1}d^{-1}\rbrace$ , \\ 
$( a \otimes b^{-1}c^{-1} )\vert_{A^{7}} = \lbrace 1, a^{-1}, c^{-1}, b^{-1}, d, a, c, d^{-1}, d^{-1}b^{-1}\rbrace$ , \\ 
$( a \otimes c^{-1}a^{-1} )\vert_{A^{7}} = \lbrace 1, a^{-1}, c^{-1}, b^{-1}, d, a, c, d^{-1}, b^{-1}c^{-1}\rbrace$ , \\ 
$( a \otimes d^{-1}c )\vert_{A^{7}} = \lbrace 1, d, a, c, cb, b, d^{-1}, a^{-1}, a^{-1}b, b^{-1}\rbrace$ , \\ 
$( a \otimes b^{-1}a )\vert_{A^{7}} = \lbrace 1, b^{-1}, d^{-1}, d, a, c, d^{-1}c, a^{-1}, c^{-1}\rbrace$ , \\ 
$( a \otimes c^{-1}d )\vert_{A^{7}} = \lbrace 1, d, a, c, a^{-1}, c^{-1}, b^{-1}, b, b^{-1}a\rbrace$ , \\ 
$( a \otimes d^{-1}a )\vert_{A^{7}} = \lbrace 1, b^{-1}, d^{-1}, a, c, a^{-1}c, d\rbrace$ , \\ 
$( a \otimes b^{-1}d )\vert_{A^{7}} = \lbrace 1, d, a, c, a^{-1}, c^{-1}, b^{-1}, d^{-1}a\rbrace$ , \\ 
$( a \otimes c^{-1}b )\vert_{A^{7}} = \lbrace 1, d, a, c, b, a^{-1}, c^{-1}, b^{-1}d\rbrace$ , \\ 
$( a \otimes c^{-1}a )\vert_{A^{7}} = \lbrace 1, d, a, b^{-1}c, b^{-1}, c, a^{-1}, c^{-1}\rbrace$ , \\ 
$( a \otimes d^{-1}b )\vert_{A^{7}} = \lbrace 1, d, a, c, cb, b, a^{-1}d, a^{-1}, d^{-1}\rbrace$ , \\ 
$( a \otimes b^{-1}c )\vert_{A^{7}} = \lbrace 1, d, a, c, d^{-1}b, d^{-1}, b^{-1}\rbrace$ , \\ 
$( a \otimes d^{-1}bd )\vert_{A^{7}} = \lbrace 1, d, a, c, cb, b, a^{-1}da\rbrace$ , \\ 
$( a \otimes b^{-1}cb )\vert_{A^{7}} = \lbrace 1, d, a, c, cb, b, d^{-1}bd\rbrace$ , \\ 
$( a \otimes c^{-1}ac )\vert_{A^{7}} = \lbrace 1, d, a, c, b^{-1}cb\rbrace$ , \\ 
$( a \otimes d^{-1}b^{-1}d )\vert_{A^{7}} = \lbrace 1, d, a, c, a^{-1}, c^{-1}, b^{-1}, d^{-1}, a^{-1}d^{-1}a\rbrace$ , \\ 
$( a \otimes b^{-1}c^{-1}b )\vert_{A^{7}} = \lbrace 1, d, a, c, a^{-1}, c^{-1}, b^{-1}, d^{-1}b^{-1}d\rbrace$ , \\ 
$( a \otimes c^{-1}a^{-1}c )\vert_{A^{7}} = \lbrace 1, d^{-1}, a^{-1}, c^{-1}, b^{-1}c^{-1}b, d, a, c\rbrace$ , \\ 
$( a \otimes d^{-1}cb )\vert_{A^{7}} = \lbrace 1, a, c, b, d, cb, a^{-1}bd, a^{-1}, d^{-1}\rbrace$ , \\ 
$( a \otimes b^{-1}ac )\vert_{A^{7}} = \lbrace 1, d, a, c, d^{-1}cb, d^{-1}, b^{-1}\rbrace$ , \\ 
$( a \otimes c^{-1}da )\vert_{A^{7}} = \lbrace 1, b, d, a, a^{2}, b^{-1}ac, b^{-1}, c, a^{-1}, c^{-1}\rbrace$ , \\ 
$( a \otimes d^{-1}b^{-1}a )\vert_{A^{7}} = \lbrace 1, b^{-1}, d^{-1}, a, c, a^{-1}d^{-1}c, d, a^{-1}, c^{-1}\rbrace$ , \\ 
$( a \otimes b^{-1}c^{-1}d )\vert_{A^{7}} = \lbrace 1, a^{-1}, c^{-1}, b^{-1}, d, a, c, d^{-1}b^{-1}a\rbrace$ , \\ 
$( a \otimes c^{-1}a^{-1}b )\vert_{A^{7}} = \lbrace 1, a, c, cb, d, a^{-1}, c^{-1}, b^{-1}c^{-1}d, d^{-1}\rbrace$ , \\ 
$( a \otimes d^{-1}ac )\vert_{A^{7}} = \lbrace 1, a, c, a^{-1}cb, d^{-1}, b, b^{-1}, d, cb\rbrace$ , \\ 
$( a \otimes b^{-1}da )\vert_{A^{7}} = \lbrace 1, d, a, a^{2}, d^{-1}ac, b^{-1}, c, a^{-1}, c^{-1}\rbrace$ , \\ 
$( a \otimes c^{-1}bd )\vert_{A^{7}} = \lbrace 1, d, a, c, b, b^{-1}da, a^{-1}, c^{-1}\rbrace$ , \\ 
$( a \otimes b^{-1}c^{-1}a )\vert_{A^{7}} = \lbrace 1, b^{-1}, d^{-1}, d, a, d^{-1}b^{-1}c, a^{-1}, c^{-1}, c\rbrace$ , \\ 
$( a \otimes c^{-1}a^{-1}d )\vert_{A^{7}} = \lbrace 1, a^{-1}, c^{-1}, b^{-1}, a, c, cb, d, b^{-1}c^{-1}a\rbrace$ , \\ 
$( a \otimes d^{-1}b^{-1}c )\vert_{A^{7}} = \lbrace 1, d, a, c, d^{-1}, a^{-1}, a^{-1}d^{-1}b, b^{-1}\rbrace$ , \\ 
$( a \otimes d^{-1}b^{-1}cb )\vert_{A^{7}} = \lbrace 1, d, a, c, cb, b, a^{-1}d^{-1}bd, a^{-1}, d^{-1}, b^{-1}\rbrace$ , \\ 
$( a \otimes b^{-1}c^{-1}ac )\vert_{A^{7}} = \lbrace 1, d, a, c, d^{-1}b^{-1}cb, d^{-1}, b^{-1}, a^{-1}, c^{-1}\rbrace$ , \\ 
$( a \otimes c^{-1}a^{-1}da )\vert_{A^{7}} = \lbrace 1, a, c, cb, d, b^{-1}c^{-1}ac, b^{-1}, a^{-1}, c^{-1}\rbrace$ , \\ 
$( a \otimes d^{-1}b^{-1}da )\vert_{A^{7}} = \lbrace 1, b^{-1}, d^{-1}, d, a, a^{-1}d^{-1}ac, c, a^{-1}, c^{-1}\rbrace$ , \\ 
$( a \otimes b^{-1}c^{-1}bd )\vert_{A^{7}} = \lbrace 1, d, a, c, a^{-1}, c^{-1}, b^{-1}, b, d^{-1}b^{-1}da\rbrace$ , \\ 
$( a \otimes c^{-1}a^{-1}cb )\vert_{A^{7}} = \lbrace 1, a, c, b, a^{-1}, c^{-1}, b^{-1}c^{-1}bd, d, d^{-1}, cb\rbrace$ , \\ 
$( a \otimes d^{-1}b^{-1}ac )\vert_{A^{7}} = \lbrace 1, a, c, a^{-1}d^{-1}cb, d^{-1}, b^{-1}, d\rbrace$ , \\ 
$( a \otimes b^{-1}c^{-1}da )\vert_{A^{7}} = \lbrace 1, d, a, a^{2}, d^{-1}b^{-1}ac, b^{-1}, a^{-1}, c^{-1}, c\rbrace$ , \\ 
$( a \otimes c^{-1}a^{-1}bd )\vert_{A^{7}} = \lbrace 1, d, a, b, c, cb, b^{-1}c^{-1}da, a^{-1}, c^{-1}\rbrace$ , \\ 
$( a \otimes a^{2} )\vert_{A^{7}} = \lbrace 1, d, a, c\rbrace$ , \\ 
$( b \otimes a )\vert_{A^{7}} = \lbrace 1, b, d, da, a, c\rbrace$ , \\ 
$( b \otimes a^{-1} )\vert_{A^{7}} = \lbrace 1, b, d, a, c, d^{-1}, a^{-1}, c^{-1}\rbrace$ , \\ 
$( b \otimes b )\vert_{A^{7}} = \lbrace 1, a, c, b, d\rbrace$ , \\ 
$( b \otimes d^{-1} )\vert_{A^{7}} = \lbrace 1, a, c, b, d, d^{-1}, a^{-1}\rbrace$ , \\ 
$( b \otimes c^{-1} )\vert_{A^{7}} = \lbrace 1, b, d, a^{-1}, c^{-1}, b^{-1}\rbrace$ , \\ 
$( b \otimes c )\vert_{A^{7}} = \lbrace 1, a, c, b, d\rbrace$ , \\ 
$( b \otimes da )\vert_{A^{7}} = \lbrace 1, b, d, a, da, a^{2}, ac, c\rbrace$ , \\ 
$( b \otimes bd )\vert_{A^{7}} = \lbrace 1, a, c, b, d, da\rbrace$ , \\ 
$( b \otimes cb )\vert_{A^{7}} = \lbrace 1, a, c, b, bd, d\rbrace$ , \\ 
$( b \otimes ac )\vert_{A^{7}} = \lbrace 1, b, d, da, a, c, cb\rbrace$ , \\ 
$( b \otimes a^{-1}d^{-1} )\vert_{A^{7}} = \lbrace 1, b, d, a, c, b^{-1}, d^{-1}, a^{-1}, c^{-1}, a^{-2}, c^{-1}a^{-1}\rbrace$ , \\ 
$( b \otimes d^{-1}b^{-1} )\vert_{A^{7}} = \lbrace 1, b^{-1}, d^{-1}, a^{-1}, b, d, a^{-1}d^{-1}\rbrace$ , \\ 
$( b \otimes c^{-1}a^{-1} )\vert_{A^{7}} = \lbrace 1, b, d, a^{-1}, c^{-1}, b^{-1}, d^{-1}\rbrace$ , \\ 
$( b \otimes a^{-1}b )\vert_{A^{7}} = \lbrace 1, b, d, a, c, bd, a^{-1}, c^{-1}, c^{-1}d, d^{-1}\rbrace$ , \\ 
$( b \otimes d^{-1}c )\vert_{A^{7}} = \lbrace 1, a, c, b, d, d^{-1}, a^{-1}, a^{-1}b\rbrace$ , \\ 
$( b \otimes c^{-1}d )\vert_{A^{7}} = \lbrace 1, b, d, da, a, b^{-1}, b^{-1}a, a^{-1}, c^{-1}\rbrace$ , \\ 
$( b \otimes a^{-1}c )\vert_{A^{7}} = \lbrace 1, b, d, a, c, d^{-1}, a^{-1}, c^{-1}b\rbrace$ , \\ 
$( b \otimes d^{-1}a )\vert_{A^{7}} = \lbrace 1, b, d, a, c, d^{-1}, a^{-1}c\rbrace$ , \\ 
$( b \otimes c^{-1}b )\vert_{A^{7}} = \lbrace 1, a, c, b, d, a^{-1}, c^{-1}, b^{-1}d\rbrace$ , \\ 
$( b \otimes c^{-1}a )\vert_{A^{7}} = \lbrace 1, b, d, da, a, b^{-1}c, b^{-1}\rbrace$ , \\ 
$( b \otimes a^{-1}d )\vert_{A^{7}} = \lbrace 1, b, d, a, c, bd, c^{-1}a, a^{-1}, c^{-1}\rbrace$ , \\ 
$( b \otimes d^{-1}b )\vert_{A^{7}} = \lbrace 1, a, c, b, d, a^{-1}d, a^{-1}, d^{-1}\rbrace$ , \\ 
$( b \otimes a^{-1}da )\vert_{A^{7}} = \lbrace 1, a, c, b, bd, d, c^{-1}ac\rbrace$ , \\ 
$( b \otimes d^{-1}bd )\vert_{A^{7}} = \lbrace 1, a, c, b, d, a^{-1}da\rbrace$ , \\ 
$( b \otimes c^{-1}ac )\vert_{A^{7}} = \lbrace 1, b, d, da, a, c, b^{-1}cb\rbrace$ , \\ 
$( b \otimes a^{-1}d^{-1}a )\vert_{A^{7}} = \lbrace 1, a, c, b, b^{-1}, d^{-1}, a^{-1}, c^{-1}a^{-1}c, d\rbrace$ , \\ 
$( b \otimes d^{-1}b^{-1}d )\vert_{A^{7}} = \lbrace 1, b^{-1}, d^{-1}, a^{-1}d^{-1}a, b, d\rbrace$ , \\ 
$( b \otimes c^{-1}a^{-1}c )\vert_{A^{7}} = \lbrace 1, b, d, d^{-1}, a^{-1}, c^{-1}b\rbrace$ , \\ 
$( b \otimes a^{-1}bd )\vert_{A^{7}} = \lbrace 1, b, d, a, c, bd, c^{-1}da, a^{-1}, c^{-1}\rbrace$ , \\ 
$( b \otimes d^{-1}cb )\vert_{A^{7}} = \lbrace 1, a, c, b, a^{-1}bd, a^{-1}, d, d^{-1}\rbrace$ , \\ 
$( b \otimes c^{-1}da )\vert_{A^{7}} = \lbrace 1, b, d, a, da, a^{2}, b^{-1}ac, b^{-1}\rbrace$ , \\ 
$( b \otimes d^{-1}b^{-1}a )\vert_{A^{7}} = \lbrace 1, b^{-1}, d^{-1}, d, da, c, b, a^{-1}d^{-1}c\rbrace$ , \\ 
$( b \otimes c^{-1}a^{-1}b )\vert_{A^{7}} = \lbrace 1, b, d, a^{-1}, c^{-1}, c^{-1}d, d^{-1}\rbrace$ , \\ 
$( b \otimes a^{-1}d^{-1}c )\vert_{A^{7}} = \lbrace 1, b, d, a, c, d^{-1}, a^{-1}, a^{-2}, c^{-1}a^{-1}b, b^{-1}\rbrace$ , \\ 
$( b \otimes a^{-1}cb )\vert_{A^{7}} = \lbrace 1, b, d, a, c, c^{-1}bd, a^{-1}, d^{-1}, bd\rbrace$ , \\ 
$( b \otimes d^{-1}ac )\vert_{A^{7}} = \lbrace 1, b, d, a, c, a^{-1}cb, d^{-1}\rbrace$ , \\ 
$( b \otimes c^{-1}bd )\vert_{A^{7}} = \lbrace 1, a, c, b, d, b^{-1}da, a^{-1}, c^{-1}, da\rbrace$ , \\ 
$( b \otimes c^{-1}a^{-1}d )\vert_{A^{7}} = \lbrace 1, b, d, a^{-1}, c^{-1}, b^{-1}, c^{-1}a\rbrace$ , \\ 
$( b \otimes a^{-1}d^{-1}b )\vert_{A^{7}} = \lbrace 1, b, d, a, c, a^{-1}, c^{-1}, c^{-1}a^{-1}d, a^{-2}, d^{-1}\rbrace$ , \\ 
$( b \otimes d^{-1}b^{-1}c )\vert_{A^{7}} = \lbrace 1, a, c, b, d^{-1}, a^{-1}, d, da, a^{-1}d^{-1}b, b^{-1}\rbrace$ , \\ 
$( b \otimes a^{-1}d^{-1}bd )\vert_{A^{7}} = \lbrace 1, b, d, a, c, bd, c^{-1}a^{-1}da, a^{-1}, c^{-1}, d^{-1}\rbrace$ , \\ 
$( b \otimes d^{-1}b^{-1}cb )\vert_{A^{7}} = \lbrace 1, a, c, b, d, da, a^{-1}d^{-1}bd, a^{-1}, d^{-1}\rbrace$ , \\ 
$( b \otimes c^{-1}a^{-1}da )\vert_{A^{7}} = \lbrace 1, b, d, a, c^{-1}ac, a^{-1}, c^{-1}, b^{-1}\rbrace$ , \\ 
$( b \otimes d^{-1}b^{-1}da )\vert_{A^{7}} = \lbrace 1, d, a, d^{-1}, a^{-1}d^{-1}ac, b^{-1}, da, c, b\rbrace$ , \\ 
$( b \otimes c^{-1}a^{-1}cb )\vert_{A^{7}} = \lbrace 1, b, d, a, c, c^{-1}bd, a^{-1}, d^{-1}\rbrace$ , \\ 
$( b \otimes a^{-1}d^{-1}ac )\vert_{A^{7}} = \lbrace 1, d^{-1}, a^{-1}, a, c, c^{-1}a^{-1}cb, b, b^{-1}, d\rbrace$ , \\ 
$( b \otimes a^{-1}d^{-1}cb )\vert_{A^{7}} = \lbrace 1, b, d, a, c, c^{-1}a^{-1}bd, a^{-2}, d^{-1}, a^{-1}\rbrace$ , \\ 
$( b \otimes d^{-1}b^{-1}ac )\vert_{A^{7}} = \lbrace 1, b, a, c, d, da, a^{-1}d^{-1}cb, d^{-1}, b^{-1}\rbrace$ , \\ 
$( b \otimes c^{-1}a^{-1}bd )\vert_{A^{7}} = \lbrace 1, b, d, c^{-1}da, a^{-1}, c^{-1}\rbrace$ , \\ 
$( b \otimes a^{2} )\vert_{A^{7}} = \lbrace 1, a, c, b, d, da\rbrace$ , \\ 
$( b \otimes a^{-2} )\vert_{A^{7}} = \lbrace 1, a, c, b, d^{-1}, a^{-1}, c^{-1}, d\rbrace$ , \\ 
$( c \otimes a )\vert_{A^{7}} = \lbrace 1, a, c, b, d\rbrace$ , \\ 
$( c \otimes a^{-1} )\vert_{A^{7}} = \lbrace 1, a, c, b, d^{-1}, a^{-1}, c^{-1}\rbrace$ , \\ 
$( c \otimes b^{-1} )\vert_{A^{7}} = \lbrace 1, a, c, b, b^{-1}, d^{-1}\rbrace$ , \\ 
$( c \otimes d )\vert_{A^{7}} = \lbrace 1, a, c, b, bd, d\rbrace$ , \\ 
$( c \otimes d^{-1} )\vert_{A^{7}} = \lbrace 1, a, c, b, b^{-1}, d^{-1}, a^{-1}\rbrace$ , \\ 
$( c \otimes c )\vert_{A^{7}} = \lbrace 1, a, c, b\rbrace$ , \\ 
$( c \otimes da )\vert_{A^{7}} = \lbrace 1, a, c, b, bd, d, a^{2}, ac\rbrace$ , \\ 
$( c \otimes bd )\vert_{A^{7}} = \lbrace 1, a, c, b, d, bd, da\rbrace$ , \\ 
$( c \otimes cb )\vert_{A^{7}} = \lbrace 1, a, c, b, bd, d\rbrace$ , \\ 
$( c \otimes ac )\vert_{A^{7}} = \lbrace 1, d, a, c, cb, b\rbrace$ , \\ 
$( c \otimes a^{-1}d^{-1} )\vert_{A^{7}} = \lbrace 1, a, c, b, b^{-1}, d^{-1}, a^{-1}, c^{-1}, a^{-2}, c^{-1}a^{-1}\rbrace$ , \\ 
$( c \otimes d^{-1}b^{-1} )\vert_{A^{7}} = \lbrace 1, b^{-1}, d^{-1}, a, c, b, a^{-1}, a^{-1}d^{-1}\rbrace$ , \\ 
$( c \otimes b^{-1}c^{-1} )\vert_{A^{7}} = \lbrace 1, a^{-1}, c^{-1}, b^{-1}, d^{-1}, a, c, b, d^{-1}b^{-1}\rbrace$ , \\ 
$( c \otimes a^{-1}b )\vert_{A^{7}} = \lbrace 1, a, c, b, bd, d, a^{-1}, c^{-1}, c^{-1}d, d^{-1}\rbrace$ , \\ 
$( c \otimes d^{-1}c )\vert_{A^{7}} = \lbrace 1, a, c, b, d^{-1}, a^{-1}, a^{-1}b, b^{-1}\rbrace$ , \\ 
$( c \otimes b^{-1}a )\vert_{A^{7}} = \lbrace 1, a, c, b, b^{-1}, d^{-1}, d, d^{-1}c\rbrace$ , \\ 
$( c \otimes a^{-1}c )\vert_{A^{7}} = \lbrace 1, a, c, b, d^{-1}, a^{-1}, c^{-1}b\rbrace$ , \\ 
$( c \otimes d^{-1}a )\vert_{A^{7}} = \lbrace 1, a, c, b, b^{-1}, d^{-1}, a^{-1}c\rbrace$ , \\ 
$( c \otimes b^{-1}d )\vert_{A^{7}} = \lbrace 1, a, c, b, d, b^{-1}, d^{-1}a\rbrace$ , \\ 
$( c \otimes a^{-1}d )\vert_{A^{7}} = \lbrace 1, a, c, b, bd, d, c^{-1}a, a^{-1}, c^{-1}\rbrace$ , \\ 
$( c \otimes d^{-1}b )\vert_{A^{7}} = \lbrace 1, a, c, b, a^{-1}d, a^{-1}, d^{-1}\rbrace$ , \\ 
$( c \otimes b^{-1}c )\vert_{A^{7}} = \lbrace 1, a, c, b, d^{-1}b, d^{-1}, b^{-1}\rbrace$ , \\ 
$( c \otimes a^{-1}da )\vert_{A^{7}} = \lbrace 1, a, c, b, bd, d, c^{-1}ac\rbrace$ , \\ 
$( c \otimes d^{-1}bd )\vert_{A^{7}} = \lbrace 1, a, c, b, bd, d, a^{-1}da\rbrace$ , \\ 
$( c \otimes b^{-1}cb )\vert_{A^{7}} = \lbrace 1, a, c, b, d^{-1}bd\rbrace$ , \\ 
$( c \otimes a^{-1}d^{-1}a )\vert_{A^{7}} = \lbrace 1, a, c, b, b^{-1}, d^{-1}, a^{-1}, c^{-1}a^{-1}c\rbrace$ , \\ 
$( c \otimes d^{-1}b^{-1}d )\vert_{A^{7}} = \lbrace 1, a, c, b, b^{-1}, d^{-1}, a^{-1}d^{-1}a\rbrace$ , \\ 
$( c \otimes b^{-1}c^{-1}b )\vert_{A^{7}} = \lbrace 1, a^{-1}, c^{-1}, b^{-1}, d^{-1}b^{-1}d, a, c, b\rbrace$ , \\ 
$( c \otimes a^{-1}bd )\vert_{A^{7}} = \lbrace 1, a, c, b, d, bd, c^{-1}da, a^{-1}, c^{-1}\rbrace$ , \\ 
$( c \otimes d^{-1}cb )\vert_{A^{7}} = \lbrace 1, a, c, b, a^{-1}bd, a^{-1}, d^{-1}\rbrace$ , \\ 
$( c \otimes b^{-1}ac )\vert_{A^{7}} = \lbrace 1, d, a, c, d^{-1}cb, d^{-1}, b, b^{-1}\rbrace$ , \\ 
$( c \otimes d^{-1}b^{-1}a )\vert_{A^{7}} = \lbrace 1, b^{-1}, d^{-1}, a, c, b, a^{-1}d^{-1}c\rbrace$ , \\ 
$( c \otimes b^{-1}c^{-1}d )\vert_{A^{7}} = \lbrace 1, a^{-1}, c^{-1}, b^{-1}, b, bd, a, c, d^{-1}b^{-1}a\rbrace$ , \\ 
$( c \otimes a^{-1}d^{-1}c )\vert_{A^{7}} = \lbrace 1, a, c, b, d^{-1}, a^{-1}, a^{-2}, c^{-1}a^{-1}b, b^{-1}\rbrace$ , \\ 
$( c \otimes a^{-1}cb )\vert_{A^{7}} = \lbrace 1, a, c, b, c^{-1}bd, a^{-1}, d, d^{-1}, bd\rbrace$ , \\ 
$( c \otimes d^{-1}ac )\vert_{A^{7}} = \lbrace 1, a, c, a^{-1}cb, d^{-1}, b, b^{-1}\rbrace$ , \\ 
$( c \otimes b^{-1}da )\vert_{A^{7}} = \lbrace 1, a, c, b, d, a^{2}, d^{-1}ac, b^{-1}\rbrace$ , \\ 
$( c \otimes b^{-1}c^{-1}a )\vert_{A^{7}} = \lbrace 1, b^{-1}, d^{-1}, b, bd, a, c, d^{-1}b^{-1}c, a^{-1}, c^{-1}\rbrace$ , \\ 
$( c \otimes a^{-1}d^{-1}b )\vert_{A^{7}} = \lbrace 1, a, c, b, a^{-1}, c^{-1}, c^{-1}a^{-1}d, a^{-2}, d^{-1}\rbrace$ , \\ 
$( c \otimes d^{-1}b^{-1}c )\vert_{A^{7}} = \lbrace 1, a, c, b, d^{-1}, a^{-1}, a^{-1}d^{-1}b, b^{-1}\rbrace$ , \\ 
$( c \otimes a^{-1}d^{-1}bd )\vert_{A^{7}} = \lbrace 1, a, c, b, bd, d, c^{-1}a^{-1}da, a^{-1}, c^{-1}, d^{-1}\rbrace$ , \\ 
$( c \otimes d^{-1}b^{-1}cb )\vert_{A^{7}} = \lbrace 1, a, c, b, a^{-1}d^{-1}bd, a^{-1}, d^{-1}, b^{-1}\rbrace$ , \\ 
$( c \otimes b^{-1}c^{-1}ac )\vert_{A^{7}} = \lbrace 1, b, bd, a, c, d^{-1}b^{-1}cb, d^{-1}, b^{-1}\rbrace$ , \\ 
$( c \otimes d^{-1}b^{-1}da )\vert_{A^{7}} = \lbrace 1, a, c, b, b^{-1}, d^{-1}, d, a^{-1}d^{-1}ac\rbrace$ , \\ 
$( c \otimes b^{-1}c^{-1}bd )\vert_{A^{7}} = \lbrace 1, b, d, b^{-1}, d^{-1}b^{-1}da, a, a^{-1}, c^{-1}, bd, c\rbrace$ , \\ 
$( c \otimes a^{-1}d^{-1}ac )\vert_{A^{7}} = \lbrace 1, d^{-1}, a^{-1}, a, c, c^{-1}a^{-1}cb, b, b^{-1}\rbrace$ , \\ 
$( c \otimes a^{-1}d^{-1}cb )\vert_{A^{7}} = \lbrace 1, a, c, b, c^{-1}a^{-1}bd, a^{-2}, d^{-1}, a^{-1}\rbrace$ , \\ 
$( c \otimes d^{-1}b^{-1}ac )\vert_{A^{7}} = \lbrace 1, a, c, a^{-1}d^{-1}cb, d^{-1}, b^{-1}, b\rbrace$ , \\ 
$( c \otimes b^{-1}c^{-1}da )\vert_{A^{7}} = \lbrace 1, a, c, d, b, bd, a^{2}, d^{-1}b^{-1}ac, b^{-1}, a^{-1}, c^{-1}\rbrace$ , \\ 
$( c \otimes a^{2} )\vert_{A^{7}} = \lbrace 1, a, c, b, d\rbrace$ , \\ 
$( c \otimes a^{-2} )\vert_{A^{7}} = \lbrace 1, a, c, b, d^{-1}, a^{-1}, c^{-1}\rbrace$ , \\ 
$( d \otimes a^{-1} )\vert_{A^{7}} = \lbrace 1, b, d, a, d^{-1}, a^{-1}, c^{-1}\rbrace$ , \\ 
$( d \otimes b )\vert_{A^{7}} = \lbrace 1, a, c, b, d\rbrace$ , \\ 
$( d \otimes b^{-1} )\vert_{A^{7}} = \lbrace 1, a^{-1}, c^{-1}, b^{-1}, d, a, d^{-1}\rbrace$ , \\ 
$( d \otimes d )\vert_{A^{7}} = \lbrace 1, b, d, a\rbrace$ , \\ 
$( d \otimes c^{-1} )\vert_{A^{7}} = \lbrace 1, a^{-1}, c^{-1}, b^{-1}, d, a, b\rbrace$ , \\ 
$( d \otimes c )\vert_{A^{7}} = \lbrace 1, a, c, b, d, a^{2}, ac\rbrace$ , \\ 
$( d \otimes da )\vert_{A^{7}} = \lbrace 1, b, d, a^{2}, a, ac\rbrace$ , \\ 
$( d \otimes bd )\vert_{A^{7}} = \lbrace 1, a, c, b, d, da\rbrace$ , \\ 
$( d \otimes cb )\vert_{A^{7}} = \lbrace 1, a, c, b, d, a^{2}, ac, bd\rbrace$ , \\ 
$( d \otimes ac )\vert_{A^{7}} = \lbrace 1, d, a, c, a^{2}, ac, cb, b\rbrace$ , \\ 
$( d \otimes a^{-1}d^{-1} )\vert_{A^{7}} = \lbrace 1, b, d, a, b^{-1}, d^{-1}, a^{-1}, c^{-1}, a^{-2}, c^{-1}a^{-1}\rbrace$ , \\ 
$( d \otimes b^{-1}c^{-1} )\vert_{A^{7}} = \lbrace 1, a^{-1}, c^{-1}, b^{-1}, d, a, d^{-1}\rbrace$ , \\ 
$( d \otimes c^{-1}a^{-1} )\vert_{A^{7}} = \lbrace 1, d, a, b, a^{-1}, c^{-1}, b^{-1}, d^{-1}, b^{-1}c^{-1}\rbrace$ , \\ 
$( d \otimes a^{-1}b )\vert_{A^{7}} = \lbrace 1, d, a, a^{2}, ac, b, a^{-1}, c^{-1}, c^{-1}d, d^{-1}\rbrace$ , \\ 
$( d \otimes b^{-1}a )\vert_{A^{7}} = \lbrace 1, d, a, a^{2}, ac, d^{-1}, d^{-1}c, b^{-1}, a^{-1}, c^{-1}\rbrace$ , \\ 
$( d \otimes c^{-1}d )\vert_{A^{7}} = \lbrace 1, a^{-1}, c^{-1}, b^{-1}, d, a, b, da, b^{-1}a\rbrace$ , \\ 
$( d \otimes a^{-1}c )\vert_{A^{7}} = \lbrace 1, d, a, a^{2}, ac, b, d^{-1}, a^{-1}, c^{-1}b\rbrace$ , \\ 
$( d \otimes b^{-1}d )\vert_{A^{7}} = \lbrace 1, a^{-1}, c^{-1}, b^{-1}, b, d, a, d^{-1}a\rbrace$ , \\ 
$( d \otimes c^{-1}b )\vert_{A^{7}} = \lbrace 1, d, a, b, a^{-1}, c^{-1}, b^{-1}d\rbrace$ , \\ 
$( d \otimes c^{-1}a )\vert_{A^{7}} = \lbrace 1, d, a, b, da, b^{-1}c, b^{-1}, a^{-1}, c^{-1}\rbrace$ , \\ 
$( d \otimes a^{-1}d )\vert_{A^{7}} = \lbrace 1, b, d, a, c^{-1}a, a^{-1}, c^{-1}\rbrace$ , \\ 
$( d \otimes b^{-1}c )\vert_{A^{7}} = \lbrace 1, d, a, a^{2}, ac, d^{-1}b, d^{-1}\rbrace$ , \\ 
$( d \otimes a^{-1}da )\vert_{A^{7}} = \lbrace 1, b, d, a, c^{-1}ac\rbrace$ , \\ 
$( d \otimes b^{-1}cb )\vert_{A^{7}} = \lbrace 1, a, c, b, d, a^{2}, ac, d^{-1}bd\rbrace$ , \\ 
$( d \otimes c^{-1}ac )\vert_{A^{7}} = \lbrace 1, b, d, da, a, c, b^{-1}cb\rbrace$ , \\ 
$( d \otimes a^{-1}d^{-1}a )\vert_{A^{7}} = \lbrace 1, b^{-1}, d^{-1}, a^{-1}, c^{-1}a^{-1}c, b, d, a\rbrace$ , \\ 
$( d \otimes b^{-1}c^{-1}b )\vert_{A^{7}} = \lbrace 1, a^{-1}, c^{-1}, b^{-1}, d, a, b^{-1}d\rbrace$ , \\ 
$( d \otimes c^{-1}a^{-1}c )\vert_{A^{7}} = \lbrace 1, b, d, a, d^{-1}, a^{-1}, c^{-1}, b^{-1}c^{-1}b\rbrace$ , \\ 
$( d \otimes a^{-1}bd )\vert_{A^{7}} = \lbrace 1, d, a, a^{2}, ac, b, c^{-1}da, a^{-1}, c^{-1}\rbrace$ , \\ 
$( d \otimes b^{-1}ac )\vert_{A^{7}} = \lbrace 1, d, a, c, a^{2}, ac, d^{-1}cb, d^{-1}\rbrace$ , \\ 
$( d \otimes c^{-1}da )\vert_{A^{7}} = \lbrace 1, d, a, b, da, a^{2}, b^{-1}ac, b^{-1}, a^{-1}, c^{-1}\rbrace$ , \\ 
$( d \otimes b^{-1}c^{-1}d )\vert_{A^{7}} = \lbrace 1, a^{-1}, c^{-1}, b^{-1}, d, a, b^{-1}a\rbrace$ , \\ 
$( d \otimes c^{-1}a^{-1}b )\vert_{A^{7}} = \lbrace 1, b, d, a, a^{-1}, c^{-1}, b^{-1}c^{-1}d, d^{-1}\rbrace$ , \\ 
$( d \otimes a^{-1}d^{-1}c )\vert_{A^{7}} = \lbrace 1, d, a, a^{2}, ac, b, d^{-1}, a^{-1}, a^{-2}, c^{-1}a^{-1}b, b^{-1}\rbrace$ , \\ 
$( d \otimes a^{-1}cb )\vert_{A^{7}} = \lbrace 1, b, d, a, c, a^{2}, ac, c^{-1}bd, a^{-1}, d^{-1}\rbrace$ , \\ 
$( d \otimes b^{-1}da )\vert_{A^{7}} = \lbrace 1, b, d, a^{2}, a, d^{-1}ac, b^{-1}, ac, a^{-1}, c^{-1}\rbrace$ , \\ 
$( d \otimes c^{-1}bd )\vert_{A^{7}} = \lbrace 1, d, a, b, b^{-1}da, a^{-1}, c^{-1}, da\rbrace$ , \\ 
$( d \otimes b^{-1}c^{-1}a )\vert_{A^{7}} = \lbrace 1, d, a, b^{-1}, d^{-1}, b^{-1}c, a^{-1}, c^{-1}\rbrace$ , \\ 
$( d \otimes c^{-1}a^{-1}d )\vert_{A^{7}} = \lbrace 1, d, a, b, a^{-1}, c^{-1}, b^{-1}, b^{-1}c^{-1}a\rbrace$ , \\ 
$( d \otimes a^{-1}d^{-1}b )\vert_{A^{7}} = \lbrace 1, d, a, a^{2}, ac, b, a^{-1}, c^{-1}, c^{-1}a^{-1}d, a^{-2}, d^{-1}\rbrace$ , \\ 
$( d \otimes a^{-1}d^{-1}bd )\vert_{A^{7}} = \lbrace 1, d, a, a^{2}, ac, b, c^{-1}a^{-1}da, a^{-1}, c^{-1}, d^{-1}\rbrace$ , \\ 
$( d \otimes b^{-1}c^{-1}ac )\vert_{A^{7}} = \lbrace 1, d, a, c, b^{-1}cb, b^{-1}, d^{-1}, a^{-1}, c^{-1}\rbrace$ , \\ 
$( d \otimes c^{-1}a^{-1}da )\vert_{A^{7}} = \lbrace 1, b, d, a, b^{-1}c^{-1}ac, b^{-1}, a^{-1}, c^{-1}\rbrace$ , \\ 
$( d \otimes b^{-1}c^{-1}bd )\vert_{A^{7}} = \lbrace 1, a^{-1}, c^{-1}, b^{-1}, d, a, b, b^{-1}da\rbrace$ , \\ 
$( d \otimes c^{-1}a^{-1}cb )\vert_{A^{7}} = \lbrace 1, d, a, a^{2}, ac, b, a^{-1}, c^{-1}, b^{-1}c^{-1}bd, d^{-1}\rbrace$ , \\ 
$( d \otimes a^{-1}d^{-1}ac )\vert_{A^{7}} = \lbrace 1, d^{-1}, a^{-1}, a, c, c^{-1}a^{-1}cb, b, b^{-1}, d, a^{2}, ac\rbrace$ , \\ 
$( d \otimes a^{-1}d^{-1}cb )\vert_{A^{7}} = \lbrace 1, b, d, a, c, a^{2}, ac, c^{-1}a^{-1}bd, a^{-2}, d^{-1}, a^{-1}\rbrace$ , \\ 
$( d \otimes b^{-1}c^{-1}da )\vert_{A^{7}} = \lbrace 1, d, a, a^{2}, b^{-1}ac, b^{-1}, a^{-1}, c^{-1}\rbrace$ , \\ 
$( d \otimes c^{-1}a^{-1}bd )\vert_{A^{7}} = \lbrace 1, b, d, a, b^{-1}c^{-1}da, a^{-1}, c^{-1}\rbrace$ , \\ 
$( d \otimes a^{2} )\vert_{A^{7}} = \lbrace 1, d, a, c, b, a^{2}, ac\rbrace$ , \\ 
$( d \otimes a^{-2} )\vert_{A^{7}} = \lbrace 1, d^{-1}, a^{-1}, c^{-1}, b, d, a\rbrace$ , \\ 
$( a^{-1} \otimes b^{-1} )\vert_{A^{7}} = \lbrace 1, a^{-1}, c^{-1}, b^{-1}, d^{-1}, a^{-1}d^{-1}\rbrace$ , \\ 
$( a^{-1} \otimes c^{-1} )\vert_{A^{7}} = \lbrace 1, a^{-1}, c^{-1}, b^{-1}, d^{-1}\rbrace$ , \\ 
$( a^{-1} \otimes a^{-1}d^{-1} )\vert_{A^{7}} = \lbrace 1, d^{-1}, a^{-1}, c^{-1}, b^{-1}, a^{-2}, c^{-1}a^{-1}\rbrace$ , \\ 
$( a^{-1} \otimes d^{-1}b^{-1} )\vert_{A^{7}} = \lbrace 1, a^{-1}, c^{-1}, b^{-1}, d^{-1}, a^{-2}, c^{-1}a^{-1}, a^{-1}d^{-1}\rbrace$ , \\ 
$( a^{-1} \otimes b^{-1}c^{-1} )\vert_{A^{7}} = \lbrace 1, a^{-1}, c^{-1}, b^{-1}, d^{-1}, a^{-1}d^{-1}, d^{-1}b^{-1}\rbrace$ , \\ 
$( a^{-1} \otimes c^{-1}a^{-1} )\vert_{A^{7}} = \lbrace 1, d^{-1}, a^{-1}, c^{-1}, b^{-1}, b^{-1}c^{-1}\rbrace$ , \\ 
$( a^{-1} \otimes a^{-1}b )\vert_{A^{7}} = \lbrace 1, d^{-1}, a^{-1}, b, c^{-1}b, d, c^{-1}, c^{-1}d\rbrace$ , \\ 
$( a^{-1} \otimes b^{-1}a )\vert_{A^{7}} = \lbrace 1, b^{-1}, d^{-1}, d, c^{-1}a, a^{-1}, c^{-1}, a, c, a^{-1}d^{-1}c\rbrace$ , \\ 
$( a^{-1} \otimes c^{-1}d )\vert_{A^{7}} = \lbrace 1, a^{-1}, c^{-1}, b^{-1}, b, d, c^{-1}d, d^{-1}, a, b^{-1}a\rbrace$ , \\ 
$( a^{-1} \otimes a^{-1}c )\vert_{A^{7}} = \lbrace 1, d^{-1}, a^{-1}, b, c^{-1}b\rbrace$ , \\ 
$( a^{-1} \otimes b^{-1}d )\vert_{A^{7}} = \lbrace 1, a^{-1}, c^{-1}, b^{-1}, d, c^{-1}a, d^{-1}, a^{-1}d^{-1}a\rbrace$ , \\ 
$( a^{-1} \otimes c^{-1}b )\vert_{A^{7}} = \lbrace 1, a^{-1}, c^{-1}, b, d, c^{-1}d, d^{-1}, b^{-1}d\rbrace$ , \\ 
$( a^{-1} \otimes c^{-1}a )\vert_{A^{7}} = \lbrace 1, d^{-1}, a^{-1}, c^{-1}, d, a, b^{-1}c, b^{-1}\rbrace$ , \\ 
$( a^{-1} \otimes a^{-1}d )\vert_{A^{7}} = \lbrace 1, d^{-1}, a^{-1}, c^{-1}, b, d, c^{-1}a\rbrace$ , \\ 
$( a^{-1} \otimes b^{-1}c )\vert_{A^{7}} = \lbrace 1, a^{-1}, c^{-1}, a, c, a^{-1}d^{-1}b, d^{-1}, b^{-1}\rbrace$ , \\ 
$( a^{-1} \otimes a^{-1}da )\vert_{A^{7}} = \lbrace 1, b, d, a, c^{-1}ac, d^{-1}, a^{-1}, c^{-1}\rbrace$ , \\ 
$( a^{-1} \otimes b^{-1}cb )\vert_{A^{7}} = \lbrace 1, a, c, b, a^{-1}d^{-1}bd, a^{-1}, d^{-1}, c^{-1}\rbrace$ , \\ 
$( a^{-1} \otimes c^{-1}ac )\vert_{A^{7}} = \lbrace 1, d, a, c, b^{-1}cb, d^{-1}, a^{-1}, c^{-1}\rbrace$ , \\ 
$( a^{-1} \otimes a^{-1}d^{-1}a )\vert_{A^{7}} = \lbrace 1, b^{-1}, d^{-1}, a^{-1}, c^{-1}a^{-1}c, c^{-1}\rbrace$ , \\ 
$( a^{-1} \otimes d^{-1}b^{-1}d )\vert_{A^{7}} = \lbrace 1, a^{-1}, c^{-1}, b^{-1}, d^{-1}, a^{-1}d^{-1}a\rbrace$ , \\ 
$( a^{-1} \otimes b^{-1}c^{-1}b )\vert_{A^{7}} = \lbrace 1, a^{-1}, c^{-1}, b^{-1}, d^{-1}, d^{-1}b^{-1}d\rbrace$ , \\ 
$( a^{-1} \otimes c^{-1}a^{-1}c )\vert_{A^{7}} = \lbrace 1, d^{-1}, a^{-1}, c^{-1}, b^{-1}c^{-1}b\rbrace$ , \\ 
$( a^{-1} \otimes a^{-1}bd )\vert_{A^{7}} = \lbrace 1, d^{-1}, a^{-1}, b, c^{-1}b, d, c^{-1}da, c^{-1}\rbrace$ , \\ 
$( a^{-1} \otimes b^{-1}ac )\vert_{A^{7}} = \lbrace 1, d, c^{-1}a, a^{-1}, c^{-1}, a, c, a^{-1}d^{-1}cb, d^{-1}, b^{-1}\rbrace$ , \\ 
$( a^{-1} \otimes c^{-1}da )\vert_{A^{7}} = \lbrace 1, a^{-1}, c^{-1}, b, d, c^{-1}d, d^{-1}, a, a^{2}, b^{-1}ac, b^{-1}\rbrace$ , \\ 
$( a^{-1} \otimes d^{-1}b^{-1}a )\vert_{A^{7}} = \lbrace 1, b^{-1}, d^{-1}, a^{-1}, c^{-1}, a^{-1}c, a^{-1}d^{-1}c\rbrace$ , \\ 
$( a^{-1} \otimes b^{-1}c^{-1}d )\vert_{A^{7}} = \lbrace 1, a^{-1}, c^{-1}, b^{-1}, d^{-1}, d, a, d^{-1}b^{-1}a\rbrace$ , \\ 
$( a^{-1} \otimes c^{-1}a^{-1}b )\vert_{A^{7}} = \lbrace 1, d^{-1}, a^{-1}, c^{-1}, b, d, b^{-1}c^{-1}d\rbrace$ , \\ 
$( a^{-1} \otimes a^{-1}d^{-1}c )\vert_{A^{7}} = \lbrace 1, d^{-1}, a^{-1}, b, c^{-1}b, a^{-2}, c^{-1}a^{-1}b, b^{-1}\rbrace$ , \\ 
$( a^{-1} \otimes a^{-1}cb )\vert_{A^{7}} = \lbrace 1, a, c, b, d^{-1}, a^{-1}, c^{-1}b, c^{-1}bd, d\rbrace$ , \\ 
$( a^{-1} \otimes b^{-1}da )\vert_{A^{7}} = \lbrace 1, d, c^{-1}a, d^{-1}, a, a^{-1}d^{-1}ac, b^{-1}, c, a^{-1}, c^{-1}\rbrace$ , \\ 
$( a^{-1} \otimes c^{-1}bd )\vert_{A^{7}} = \lbrace 1, b, a^{-1}d, a^{-1}, c^{-1}, d, c^{-1}d, b^{-1}da, a\rbrace$ , \\ 
$( a^{-1} \otimes b^{-1}c^{-1}a )\vert_{A^{7}} = \lbrace 1, a^{-1}, c^{-1}, b^{-1}, d^{-1}, a^{-1}d^{-1}, d, a, d^{-1}b^{-1}c\rbrace$ , \\ 
$( a^{-1} \otimes c^{-1}a^{-1}d )\vert_{A^{7}} = \lbrace 1, d^{-1}, a^{-1}, c^{-1}, b^{-1}, b, d, b^{-1}c^{-1}a\rbrace$ , \\ 
$( a^{-1} \otimes a^{-1}d^{-1}b )\vert_{A^{7}} = \lbrace 1, d^{-1}, a^{-1}, b, c^{-1}b, c^{-1}, c^{-1}a^{-1}d, a^{-2}\rbrace$ , \\ 
$( a^{-1} \otimes d^{-1}b^{-1}c )\vert_{A^{7}} = \lbrace 1, a^{-1}, c^{-1}, d^{-1}, a^{-2}, c^{-1}a^{-1}, a^{-1}c, a^{-1}d^{-1}b, b^{-1}\rbrace$ , \\ 
$( a^{-1} \otimes a^{-1}d^{-1}bd )\vert_{A^{7}} = \lbrace 1, d^{-1}, a^{-1}, b, c^{-1}b, d, c^{-1}a^{-1}da, c^{-1}\rbrace$ , \\ 
$( a^{-1} \otimes d^{-1}b^{-1}cb )\vert_{A^{7}} = \lbrace 1, a, c, b, d^{-1}, a^{-1}c, a^{-1}d^{-1}bd, a^{-1}, c^{-1}, a^{-2}, c^{-1}a^{-1}, b^{-1}\rbrace$ , \\ 
$( a^{-1} \otimes b^{-1}c^{-1}ac )\vert_{A^{7}} = \lbrace 1, d, a, c, d^{-1}b^{-1}cb, d^{-1}, b^{-1}, a^{-1}, c^{-1}, a^{-1}d^{-1}\rbrace$ , \\ 
$( a^{-1} \otimes c^{-1}a^{-1}da )\vert_{A^{7}} = \lbrace 1, b, d, a, b^{-1}c^{-1}ac, b^{-1}, a^{-1}, c^{-1}, d^{-1}\rbrace$ , \\ 
$( a^{-1} \otimes d^{-1}b^{-1}da )\vert_{A^{7}} = \lbrace 1, b^{-1}, d^{-1}, d, c^{-1}a, a^{-1}, c^{-1}, a, a^{-1}d^{-1}ac, a^{-1}c\rbrace$ , \\ 
$( a^{-1} \otimes b^{-1}c^{-1}bd )\vert_{A^{7}} = \lbrace 1, a^{-1}, c^{-1}, b^{-1}, b, d, c^{-1}d, d^{-1}, d^{-1}b^{-1}da, a\rbrace$ , \\ 
$( a^{-1} \otimes c^{-1}a^{-1}cb )\vert_{A^{7}} = \lbrace 1, d^{-1}, a^{-1}, b, c^{-1}b, c^{-1}, b^{-1}c^{-1}bd, d\rbrace$ , \\ 
$( a^{-1} \otimes a^{-1}d^{-1}ac )\vert_{A^{7}} = \lbrace 1, d^{-1}, a^{-1}, a, c, c^{-1}a^{-1}cb, b, b^{-1}, c^{-1}b\rbrace$ , \\ 
$( a^{-1} \otimes a^{-1}d^{-1}cb )\vert_{A^{7}} = \lbrace 1, a, c, b, d^{-1}, a^{-1}, c^{-1}b, c^{-1}a^{-1}bd, a^{-2}\rbrace$ , \\ 
$( a^{-1} \otimes d^{-1}b^{-1}ac )\vert_{A^{7}} = \lbrace 1, a^{-1}, c^{-1}, a, c, d^{-1}, a^{-1}c, a^{-1}d^{-1}cb, b^{-1}\rbrace$ , \\ 
$( a^{-1} \otimes b^{-1}c^{-1}da )\vert_{A^{7}} = \lbrace 1, d^{-1}, a^{-1}, c^{-1}, d, a, a^{2}, d^{-1}b^{-1}ac, b^{-1}\rbrace$ , \\ 
$( a^{-1} \otimes c^{-1}a^{-1}bd )\vert_{A^{7}} = \lbrace 1, d^{-1}, a^{-1}, c^{-1}, b, d, b^{-1}c^{-1}da\rbrace$ , \\ 
$( a^{-1} \otimes a^{-2} )\vert_{A^{7}} = \lbrace 1, d^{-1}, a^{-1}, c^{-1}\rbrace$ , \\ 
$( b^{-1} \otimes a^{-1} )\vert_{A^{7}} = \lbrace 1, b^{-1}, d^{-1}, a^{-1}, c^{-1}, b^{-1}c^{-1}\rbrace$ , \\ 
$( b^{-1} \otimes b^{-1} )\vert_{A^{7}} = \lbrace 1, a^{-1}, c^{-1}, b^{-1}, d^{-1}\rbrace$ , \\ 
$( b^{-1} \otimes d^{-1} )\vert_{A^{7}} = \lbrace 1, a^{-1}, c^{-1}, b^{-1}, d^{-1}\rbrace$ , \\ 
$( b^{-1} \otimes a^{-1}d^{-1} )\vert_{A^{7}} = \lbrace 1, b^{-1}, d^{-1}, a^{-1}, b^{-1}c^{-1}, c^{-1}, a^{-2}, c^{-1}a^{-1}\rbrace$ , \\ 
$( b^{-1} \otimes d^{-1}b^{-1} )\vert_{A^{7}} = \lbrace 1, a^{-1}, c^{-1}, b^{-1}, d^{-1}, a^{-1}d^{-1}\rbrace$ , \\ 
$( b^{-1} \otimes b^{-1}c^{-1} )\vert_{A^{7}} = \lbrace 1, a^{-1}, c^{-1}, b^{-1}, d^{-1}, d^{-1}b^{-1}\rbrace$ , \\ 
$( b^{-1} \otimes c^{-1}a^{-1} )\vert_{A^{7}} = \lbrace 1, b^{-1}, d^{-1}, a^{-1}, c^{-1}, d^{-1}b^{-1}, b^{-1}c^{-1}\rbrace$ , \\ 
$( b^{-1} \otimes a^{-1}b )\vert_{A^{7}} = \lbrace 1, a, c, d^{-1}b, d^{-1}, b^{-1}, b, d, a^{-1}, c^{-1}, b^{-1}c^{-1}d\rbrace$ , \\ 
$( b^{-1} \otimes d^{-1}c )\vert_{A^{7}} = \lbrace 1, d^{-1}, a^{-1}, a, d^{-1}c, b^{-1}, b, a^{-1}b\rbrace$ , \\ 
$( b^{-1} \otimes b^{-1}a )\vert_{A^{7}} = \lbrace 1, b^{-1}, d^{-1}, a, d^{-1}a, c, d^{-1}c, a^{-1}, c^{-1}\rbrace$ , \\ 
$( b^{-1} \otimes a^{-1}c )\vert_{A^{7}} = \lbrace 1, a, c, d^{-1}b, d^{-1}, b^{-1}, a^{-1}, c^{-1}, b^{-1}c^{-1}b\rbrace$ , \\ 
$( b^{-1} \otimes d^{-1}a )\vert_{A^{7}} = \lbrace 1, b^{-1}, d^{-1}, a, d^{-1}c, a^{-1}c, a^{-1}, c^{-1}\rbrace$ , \\ 
$( b^{-1} \otimes b^{-1}d )\vert_{A^{7}} = \lbrace 1, a^{-1}, c^{-1}, b^{-1}, d^{-1}, a, d^{-1}a\rbrace$ , \\ 
$( b^{-1} \otimes a^{-1}d )\vert_{A^{7}} = \lbrace 1, b^{-1}, d^{-1}, b, d, b^{-1}c^{-1}a, a^{-1}, c^{-1}\rbrace$ , \\ 
$( b^{-1} \otimes d^{-1}b )\vert_{A^{7}} = \lbrace 1, b^{-1}, d^{-1}, b, a^{-1}d, a^{-1}\rbrace$ , \\ 
$( b^{-1} \otimes b^{-1}c )\vert_{A^{7}} = \lbrace 1, a^{-1}, c^{-1}, b^{-1}, d^{-1}, a, c, d^{-1}b\rbrace$ , \\ 
$( b^{-1} \otimes a^{-1}da )\vert_{A^{7}} = \lbrace 1, b, d, a, b^{-1}c^{-1}ac, b^{-1}, a^{-1}, c^{-1}, d^{-1}\rbrace$ , \\ 
$( b^{-1} \otimes d^{-1}bd )\vert_{A^{7}} = \lbrace 1, b, d, a^{-1}da, b^{-1}, d^{-1}\rbrace$ , \\ 
$( b^{-1} \otimes b^{-1}cb )\vert_{A^{7}} = \lbrace 1, a, c, b, d^{-1}bd, b^{-1}, d^{-1}\rbrace$ , \\ 
$( b^{-1} \otimes a^{-1}d^{-1}a )\vert_{A^{7}} = \lbrace 1, b^{-1}, d^{-1}, a^{-1}, c^{-1}a^{-1}c, c^{-1}\rbrace$ , \\ 
$( b^{-1} \otimes d^{-1}b^{-1}d )\vert_{A^{7}} = \lbrace 1, a^{-1}, c^{-1}, b^{-1}, d^{-1}, a^{-1}d^{-1}a\rbrace$ , \\ 
$( b^{-1} \otimes b^{-1}c^{-1}b )\vert_{A^{7}} = \lbrace 1, a^{-1}, c^{-1}, b^{-1}, d^{-1}, d^{-1}b^{-1}d\rbrace$ , \\ 
$( b^{-1} \otimes c^{-1}a^{-1}c )\vert_{A^{7}} = \lbrace 1, b^{-1}, d^{-1}, a^{-1}, c^{-1}, b^{-1}c^{-1}b\rbrace$ , \\ 
$( b^{-1} \otimes a^{-1}bd )\vert_{A^{7}} = \lbrace 1, a, c, d^{-1}b, d^{-1}, b^{-1}, b, d, b^{-1}c^{-1}da, a^{-1}, c^{-1}\rbrace$ , \\ 
$( b^{-1} \otimes d^{-1}cb )\vert_{A^{7}} = \lbrace 1, a, c, b, d^{-1}, d^{-1}c, b^{-1}, a^{-1}bd, a^{-1}\rbrace$ , \\ 
$( b^{-1} \otimes b^{-1}ac )\vert_{A^{7}} = \lbrace 1, b^{-1}, a, d^{-1}a, c, d^{-1}cb, d^{-1}, a^{-1}, c^{-1}\rbrace$ , \\ 
$( b^{-1} \otimes d^{-1}b^{-1}a )\vert_{A^{7}} = \lbrace 1, b^{-1}, d^{-1}, a, c, a^{-1}d^{-1}c, a^{-1}, c^{-1}\rbrace$ , \\ 
$( b^{-1} \otimes b^{-1}c^{-1}d )\vert_{A^{7}} = \lbrace 1, a^{-1}, c^{-1}, b^{-1}, d^{-1}, a, d^{-1}a, d^{-1}b^{-1}a\rbrace$ , \\ 
$( b^{-1} \otimes c^{-1}a^{-1}b )\vert_{A^{7}} = \lbrace 1, b^{-1}, d^{-1}, a^{-1}, c^{-1}, d, b^{-1}d, b^{-1}c^{-1}d\rbrace$ , \\ 
$( b^{-1} \otimes a^{-1}d^{-1}c )\vert_{A^{7}} = \lbrace 1, a, c, d^{-1}b, d^{-1}, b^{-1}, a^{-1}, c^{-1}b, a^{-2}, c^{-1}a^{-1}b\rbrace$ , \\ 
$( b^{-1} \otimes a^{-1}cb )\vert_{A^{7}} = \lbrace 1, a, c, d^{-1}b, d^{-1}, a^{-1}, c^{-1}, b, b^{-1}c^{-1}bd, d\rbrace$ , \\ 
$( b^{-1} \otimes d^{-1}ac )\vert_{A^{7}} = \lbrace 1, a, b^{-1}c, d^{-1}, d^{-1}c, a^{-1}cb, b, b^{-1}\rbrace$ , \\ 
$( b^{-1} \otimes b^{-1}da )\vert_{A^{7}} = \lbrace 1, b^{-1}, d^{-1}, d, a, d^{-1}a, a^{2}, d^{-1}ac, c, a^{-1}, c^{-1}\rbrace$ , \\ 
$( b^{-1} \otimes b^{-1}c^{-1}a )\vert_{A^{7}} = \lbrace 1, b^{-1}, d^{-1}, a, d^{-1}a, d^{-1}b^{-1}c, a^{-1}, c^{-1}\rbrace$ , \\ 
$( b^{-1} \otimes c^{-1}a^{-1}d )\vert_{A^{7}} = \lbrace 1, b^{-1}, d^{-1}, a^{-1}, c^{-1}, d^{-1}b^{-1}, d, b^{-1}d, b^{-1}c^{-1}a\rbrace$ , \\ 
$( b^{-1} \otimes a^{-1}d^{-1}b )\vert_{A^{7}} = \lbrace 1, a, c, d^{-1}b, d^{-1}, b^{-1}, a^{-1}, b^{-1}c^{-1}, c^{-1}b, c^{-1}a^{-1}d, a^{-2}\rbrace$ , \\ 
$( b^{-1} \otimes d^{-1}b^{-1}c )\vert_{A^{7}} = \lbrace 1, a^{-1}, c^{-1}, b^{-1}, d^{-1}, a, c, a^{-1}d^{-1}b\rbrace$ , \\ 
$( b^{-1} \otimes a^{-1}d^{-1}bd )\vert_{A^{7}} = \lbrace 1, a, c, d^{-1}b, d^{-1}, b^{-1}, b, d, a^{-1}, c^{-1}b, c^{-1}a^{-1}da, b^{-1}c^{-1}\rbrace$ , \\ 
$( b^{-1} \otimes d^{-1}b^{-1}cb )\vert_{A^{7}} = \lbrace 1, a, c, b, a^{-1}d^{-1}bd, a^{-1}, d^{-1}, b^{-1}\rbrace$ , \\ 
$( b^{-1} \otimes b^{-1}c^{-1}ac )\vert_{A^{7}} = \lbrace 1, b^{-1}, a, d^{-1}a, c, d^{-1}b^{-1}cb, d^{-1}, a^{-1}, c^{-1}\rbrace$ , \\ 
$( b^{-1} \otimes c^{-1}a^{-1}da )\vert_{A^{7}} = \lbrace 1, a^{-1}, c^{-1}, d, b^{-1}d, a, b^{-1}c^{-1}ac, b^{-1}, d^{-1}, d^{-1}b^{-1}\rbrace$ , \\ 
$( b^{-1} \otimes d^{-1}b^{-1}da )\vert_{A^{7}} = \lbrace 1, b^{-1}, d^{-1}, a, d^{-1}a, a^{-1}d^{-1}ac, c, a^{-1}, c^{-1}\rbrace$ , \\ 
$( b^{-1} \otimes b^{-1}c^{-1}bd )\vert_{A^{7}} = \lbrace 1, a^{-1}, c^{-1}, b^{-1}, d, d^{-1}b^{-1}da, a, d^{-1}a\rbrace$ , \\ 
$( b^{-1} \otimes c^{-1}a^{-1}cb )\vert_{A^{7}} = \lbrace 1, a, c, d^{-1}b, d^{-1}, b^{-1}, a^{-1}, c^{-1}, b, b^{-1}c^{-1}bd, d, b^{-1}d\rbrace$ , \\ 
$( b^{-1} \otimes a^{-1}d^{-1}ac )\vert_{A^{7}} = \lbrace 1, d^{-1}, a^{-1}, a, b^{-1}c, b^{-1}, c^{-1}a^{-1}cb, c^{-1}b, c, d^{-1}b\rbrace$ , \\ 
$( b^{-1} \otimes a^{-1}d^{-1}cb )\vert_{A^{7}} = \lbrace 1, a, c, d^{-1}b, d^{-1}, b^{-1}, b, a^{-1}, c^{-1}b, c^{-1}a^{-1}bd, a^{-2}\rbrace$ , \\ 
$( b^{-1} \otimes d^{-1}b^{-1}ac )\vert_{A^{7}} = \lbrace 1, b^{-1}, d^{-1}, a, c, a^{-1}d^{-1}cb, a^{-1}, c^{-1}\rbrace$ , \\ 
$( b^{-1} \otimes b^{-1}c^{-1}da )\vert_{A^{7}} = \lbrace 1, b^{-1}, d^{-1}, d, a, d^{-1}a, a^{2}, d^{-1}b^{-1}ac, a^{-1}, c^{-1}\rbrace$ , \\ 
$( b^{-1} \otimes c^{-1}a^{-1}bd )\vert_{A^{7}} = \lbrace 1, b^{-1}, d^{-1}, b, d, a^{-1}, c^{-1}, b^{-1}d, b^{-1}c^{-1}da\rbrace$ , \\ 
$( b^{-1} \otimes a^{2} )\vert_{A^{7}} = \lbrace 1, d, a, b^{-1}c, b^{-1}, d^{-1}, c, d^{-1}c\rbrace$ , \\ 
$( b^{-1} \otimes a^{-2} )\vert_{A^{7}} = \lbrace 1, d^{-1}, a^{-1}, c^{-1}, b^{-1}, b^{-1}c^{-1}\rbrace$ , \\ 
$( c^{-1} \otimes b^{-1} )\vert_{A^{7}} = \lbrace 1, a^{-1}, c^{-1}, b^{-1}, d^{-1}\rbrace$ , \\ 
$( c^{-1} \otimes d^{-1} )\vert_{A^{7}} = \lbrace 1, a^{-1}, c^{-1}, b^{-1}, d^{-1}, a^{-2}, c^{-1}a^{-1}\rbrace$ , \\ 
$( c^{-1} \otimes c^{-1} )\vert_{A^{7}} = \lbrace 1, a^{-1}, c^{-1}, b^{-1}\rbrace$ , \\ 
$( c^{-1} \otimes a^{-1}d^{-1} )\vert_{A^{7}} = \lbrace 1, a^{-1}, c^{-1}, b^{-1}, d^{-1}, b^{-1}c^{-1}, a^{-2}, c^{-1}a^{-1}\rbrace$ , \\ 
$( c^{-1} \otimes d^{-1}b^{-1} )\vert_{A^{7}} = \lbrace 1, a^{-1}, c^{-1}, b^{-1}, d^{-1}, a^{-2}, c^{-1}a^{-1}, a^{-1}d^{-1}\rbrace$ , \\ 
$( c^{-1} \otimes b^{-1}c^{-1} )\vert_{A^{7}} = \lbrace 1, a^{-1}, c^{-1}, b^{-1}, d^{-1}, d^{-1}b^{-1}\rbrace$ , \\ 
$( c^{-1} \otimes c^{-1}a^{-1} )\vert_{A^{7}} = \lbrace 1, a^{-1}, c^{-1}, b^{-1}, d^{-1}, b^{-1}c^{-1}\rbrace$ , \\ 
$( c^{-1} \otimes d^{-1}c )\vert_{A^{7}} = \lbrace 1, d^{-1}, a^{-1}, a, b^{-1}c, b^{-1}, b, a^{-2}, c^{-1}a^{-1}b\rbrace$ , \\ 
$( c^{-1} \otimes b^{-1}a )\vert_{A^{7}} = \lbrace 1, b^{-1}, d^{-1}, d, a, b^{-1}a, a^{-1}, c^{-1}, c, d^{-1}c\rbrace$ , \\ 
$( c^{-1} \otimes c^{-1}d )\vert_{A^{7}} = \lbrace 1, a^{-1}, c^{-1}, b^{-1}, d, b^{-1}d, a, b^{-1}a\rbrace$ , \\ 
$( c^{-1} \otimes d^{-1}a )\vert_{A^{7}} = \lbrace 1, b^{-1}, d^{-1}, a, b^{-1}c, a^{-1}, c^{-1}a^{-1}c, c^{-1}\rbrace$ , \\ 
$( c^{-1} \otimes b^{-1}d )\vert_{A^{7}} = \lbrace 1, a^{-1}, c^{-1}, b^{-1}, d, a, b^{-1}a, d^{-1}a\rbrace$ , \\ 
$( c^{-1} \otimes c^{-1}b )\vert_{A^{7}} = \lbrace 1, a^{-1}, c^{-1}, b^{-1}, d, b^{-1}d\rbrace$ , \\ 
$( c^{-1} \otimes c^{-1}a )\vert_{A^{7}} = \lbrace 1, a^{-1}, c^{-1}, b^{-1}, d, a, b^{-1}c\rbrace$ , \\ 
$( c^{-1} \otimes d^{-1}b )\vert_{A^{7}} = \lbrace 1, b^{-1}, b, c^{-1}a^{-1}d, a^{-2}, d^{-1}, a^{-1}\rbrace$ , \\ 
$( c^{-1} \otimes b^{-1}c )\vert_{A^{7}} = \lbrace 1, a^{-1}, c^{-1}, b^{-1}, a, c, d^{-1}b, d^{-1}\rbrace$ , \\ 
$( c^{-1} \otimes d^{-1}bd )\vert_{A^{7}} = \lbrace 1, b, d, c^{-1}a^{-1}da, a^{-1}, b^{-1}\rbrace$ , \\ 
$( c^{-1} \otimes b^{-1}cb )\vert_{A^{7}} = \lbrace 1, a, c, b, d^{-1}bd, a^{-1}, c^{-1}, b^{-1}\rbrace$ , \\ 
$( c^{-1} \otimes c^{-1}ac )\vert_{A^{7}} = \lbrace 1, d, a, c, b^{-1}cb, a^{-1}, c^{-1}, b^{-1}\rbrace$ , \\ 
$( c^{-1} \otimes a^{-1}d^{-1}a )\vert_{A^{7}} = \lbrace 1, b^{-1}, d^{-1}, a^{-1}, c^{-1}a^{-1}c, c^{-1}\rbrace$ , \\ 
$( c^{-1} \otimes d^{-1}b^{-1}d )\vert_{A^{7}} = \lbrace 1, a^{-1}, c^{-1}, b^{-1}, d^{-1}, a^{-1}d^{-1}a\rbrace$ , \\ 
$( c^{-1} \otimes b^{-1}c^{-1}b )\vert_{A^{7}} = \lbrace 1, a^{-1}, c^{-1}, b^{-1}, d^{-1}b^{-1}d\rbrace$ , \\ 
$( c^{-1} \otimes c^{-1}a^{-1}c )\vert_{A^{7}} = \lbrace 1, a^{-1}, c^{-1}, b^{-1}, d^{-1}, b^{-1}c^{-1}b\rbrace$ , \\ 
$( c^{-1} \otimes d^{-1}cb )\vert_{A^{7}} = \lbrace 1, a, c, b, b^{-1}c, b^{-1}, c^{-1}a^{-1}bd, a^{-2}, d^{-1}, a^{-1}\rbrace$ , \\ 
$( c^{-1} \otimes b^{-1}ac )\vert_{A^{7}} = \lbrace 1, b^{-1}, d, a, b^{-1}a, a^{-1}, c^{-1}, c, d^{-1}cb, d^{-1}\rbrace$ , \\ 
$( c^{-1} \otimes c^{-1}da )\vert_{A^{7}} = \lbrace 1, a^{-1}, c^{-1}, d, b^{-1}d, a, a^{2}, b^{-1}ac, b^{-1}\rbrace$ , \\ 
$( c^{-1} \otimes d^{-1}b^{-1}a )\vert_{A^{7}} = \lbrace 1, b^{-1}, d^{-1}, a^{-1}, c^{-1}, a, c, a^{-1}d^{-1}c\rbrace$ , \\ 
$( c^{-1} \otimes b^{-1}c^{-1}d )\vert_{A^{7}} = \lbrace 1, a^{-1}, c^{-1}, b^{-1}, d, a, d^{-1}b^{-1}a\rbrace$ , \\ 
$( c^{-1} \otimes c^{-1}a^{-1}b )\vert_{A^{7}} = \lbrace 1, a^{-1}, c^{-1}, b^{-1}, d, b^{-1}d, b^{-1}c^{-1}d, d^{-1}\rbrace$ , \\ 
$( c^{-1} \otimes a^{-1}d^{-1}c )\vert_{A^{7}} = \lbrace 1, d^{-1}, a^{-1}, b^{-1}, c^{-1}b, a^{-2}, c^{-1}a^{-1}b\rbrace$ , \\ 
$( c^{-1} \otimes d^{-1}ac )\vert_{A^{7}} = \lbrace 1, a, b^{-1}c, a^{-1}, c^{-1}a^{-1}cb, d^{-1}, b, b^{-1}\rbrace$ , \\ 
$( c^{-1} \otimes b^{-1}da )\vert_{A^{7}} = \lbrace 1, d, c^{-1}a, b^{-1}, a, b^{-1}a, a^{2}, d^{-1}ac, c, a^{-1}, c^{-1}\rbrace$ , \\ 
$( c^{-1} \otimes c^{-1}bd )\vert_{A^{7}} = \lbrace 1, a^{-1}, c^{-1}, b^{-1}, b, d, b^{-1}d, b^{-1}da, a\rbrace$ , \\ 
$( c^{-1} \otimes b^{-1}c^{-1}a )\vert_{A^{7}} = \lbrace 1, a^{-1}, c^{-1}, b^{-1}, d^{-1}, d, a, d^{-1}b^{-1}c\rbrace$ , \\ 
$( c^{-1} \otimes c^{-1}a^{-1}d )\vert_{A^{7}} = \lbrace 1, a^{-1}, c^{-1}, b^{-1}, d, b^{-1}d, b^{-1}c^{-1}a\rbrace$ , \\ 
$( c^{-1} \otimes a^{-1}d^{-1}b )\vert_{A^{7}} = \lbrace 1, b^{-1}, a^{-1}, b^{-1}c^{-1}, c^{-1}b, c^{-1}a^{-1}d, a^{-2}, d^{-1}\rbrace$ , \\ 
$( c^{-1} \otimes d^{-1}b^{-1}c )\vert_{A^{7}} = \lbrace 1, a^{-1}, c^{-1}, b^{-1}, d^{-1}, a^{-2}, c^{-1}a^{-1}, a, c, a^{-1}d^{-1}b\rbrace$ , \\ 
$( c^{-1} \otimes a^{-1}d^{-1}bd )\vert_{A^{7}} = \lbrace 1, b, d, a^{-1}, c^{-1}b, c^{-1}a^{-1}da, b^{-1}, b^{-1}c^{-1}, d^{-1}\rbrace$ , \\ 
$( c^{-1} \otimes d^{-1}b^{-1}cb )\vert_{A^{7}} = \lbrace 1, a, c, b, a^{-1}d^{-1}bd, a^{-1}, d^{-1}, b^{-1}, a^{-2}, c^{-1}a^{-1}\rbrace$ , \\ 
$( c^{-1} \otimes b^{-1}c^{-1}ac )\vert_{A^{7}} = \lbrace 1, d, a, c, d^{-1}b^{-1}cb, d^{-1}, b^{-1}, a^{-1}, c^{-1}\rbrace$ , \\ 
$( c^{-1} \otimes c^{-1}a^{-1}da )\vert_{A^{7}} = \lbrace 1, a^{-1}, c^{-1}, d, b^{-1}d, a, b^{-1}c^{-1}ac, b^{-1}\rbrace$ , \\ 
$( c^{-1} \otimes d^{-1}b^{-1}da )\vert_{A^{7}} = \lbrace 1, b^{-1}, d^{-1}, d, a, b^{-1}a, a^{-1}, c^{-1}, a^{-1}d^{-1}ac, c\rbrace$ , \\ 
$( c^{-1} \otimes b^{-1}c^{-1}bd )\vert_{A^{7}} = \lbrace 1, a^{-1}, c^{-1}, b^{-1}, d, b^{-1}d, d^{-1}b^{-1}da, a\rbrace$ , \\ 
$( c^{-1} \otimes c^{-1}a^{-1}cb )\vert_{A^{7}} = \lbrace 1, a^{-1}, c^{-1}, b, b^{-1}c^{-1}bd, d, d^{-1}, b^{-1}d\rbrace$ , \\ 
$( c^{-1} \otimes a^{-1}d^{-1}ac )\vert_{A^{7}} = \lbrace 1, d^{-1}, a^{-1}, a, b^{-1}c, b^{-1}, c^{-1}a^{-1}cb, c^{-1}b\rbrace$ , \\ 
$( c^{-1} \otimes a^{-1}d^{-1}cb )\vert_{A^{7}} = \lbrace 1, a, c, b, b^{-1}, a^{-1}, c^{-1}b, c^{-1}a^{-1}bd, a^{-2}, d^{-1}\rbrace$ , \\ 
$( c^{-1} \otimes d^{-1}b^{-1}ac )\vert_{A^{7}} = \lbrace 1, a^{-1}, c^{-1}, b^{-1}, a, c, a^{-1}d^{-1}cb, d^{-1}\rbrace$ , \\ 
$( c^{-1} \otimes b^{-1}c^{-1}da )\vert_{A^{7}} = \lbrace 1, a^{-1}, c^{-1}, b^{-1}, d, a, a^{2}, d^{-1}b^{-1}ac\rbrace$ , \\ 
$( c^{-1} \otimes c^{-1}a^{-1}bd )\vert_{A^{7}} = \lbrace 1, a^{-1}, c^{-1}, b^{-1}, b, d, b^{-1}d, b^{-1}c^{-1}da\rbrace$ , \\ 
$( c^{-1} \otimes a^{2} )\vert_{A^{7}} = \lbrace 1, d, a, b^{-1}c, b^{-1}\rbrace$ , \\ 
$( c^{-1} \otimes a^{-2} )\vert_{A^{7}} = \lbrace 1, d^{-1}, a^{-1}, c^{-1}, b^{-1}, b^{-1}c^{-1}\rbrace$ , \\ 
$( d^{-1} \otimes a^{-1} )\vert_{A^{7}} = \lbrace 1, b^{-1}, d^{-1}, a^{-1}, c^{-1}\rbrace$ , \\ 
$( d^{-1} \otimes d^{-1} )\vert_{A^{7}} = \lbrace 1, b^{-1}, d^{-1}, a^{-1}\rbrace$ , \\ 
$( d^{-1} \otimes c^{-1} )\vert_{A^{7}} = \lbrace 1, a^{-1}, c^{-1}, b^{-1}, d^{-1}, d^{-1}b^{-1}\rbrace$ , \\ 
$( d^{-1} \otimes a^{-1}d^{-1} )\vert_{A^{7}} = \lbrace 1, b^{-1}, d^{-1}, a^{-1}, c^{-1}, a^{-2}, c^{-1}a^{-1}\rbrace$ , \\ 
$( d^{-1} \otimes d^{-1}b^{-1} )\vert_{A^{7}} = \lbrace 1, a^{-1}, c^{-1}, b^{-1}, d^{-1}, a^{-1}d^{-1}\rbrace$ , \\ 
$( d^{-1} \otimes b^{-1}c^{-1} )\vert_{A^{7}} = \lbrace 1, a^{-1}, c^{-1}, b^{-1}, d^{-1}, a^{-1}d^{-1}, d^{-1}b^{-1}\rbrace$ , \\ 
$( d^{-1} \otimes c^{-1}a^{-1} )\vert_{A^{7}} = \lbrace 1, d^{-1}, a^{-1}, c^{-1}, b^{-1}, d^{-1}b^{-1}, b^{-1}c^{-1}\rbrace$ , \\ 
$( d^{-1} \otimes a^{-1}b )\vert_{A^{7}} = \lbrace 1, d^{-1}, a^{-1}, a, c, b, a^{-1}b, b^{-1}, d, c^{-1}, c^{-1}d\rbrace$ , \\ 
$( d^{-1} \otimes d^{-1}c )\vert_{A^{7}} = \lbrace 1, a, c, b, d^{-1}, a^{-1}, a^{-1}c, a^{-1}b, b^{-1}\rbrace$ , \\ 
$( d^{-1} \otimes c^{-1}d )\vert_{A^{7}} = \lbrace 1, a^{-1}, c^{-1}, b^{-1}, b, a^{-1}d, d^{-1}, d, a, d^{-1}b^{-1}a\rbrace$ , \\ 
$( d^{-1} \otimes a^{-1}c )\vert_{A^{7}} = \lbrace 1, d^{-1}, a^{-1}, a, c, b, a^{-1}b, b^{-1}, c^{-1}b\rbrace$ , \\ 
$( d^{-1} \otimes d^{-1}a )\vert_{A^{7}} = \lbrace 1, b^{-1}, d^{-1}, a^{-1}, a^{-1}c\rbrace$ , \\ 
$( d^{-1} \otimes c^{-1}b )\vert_{A^{7}} = \lbrace 1, b, a^{-1}d, a^{-1}, d^{-1}, c^{-1}, b^{-1}, d^{-1}b^{-1}d\rbrace$ , \\ 
$( d^{-1} \otimes c^{-1}a )\vert_{A^{7}} = \lbrace 1, d^{-1}, a^{-1}, d, a, d^{-1}b^{-1}c, b^{-1}, c^{-1}\rbrace$ , \\ 
$( d^{-1} \otimes a^{-1}d )\vert_{A^{7}} = \lbrace 1, b^{-1}, d^{-1}, a^{-1}, b, d, c^{-1}a, c^{-1}\rbrace$ , \\ 
$( d^{-1} \otimes d^{-1}b )\vert_{A^{7}} = \lbrace 1, a, c, b, b^{-1}, d^{-1}, a^{-1}, a^{-1}d\rbrace$ , \\ 
$( d^{-1} \otimes a^{-1}da )\vert_{A^{7}} = \lbrace 1, b, d, a, c^{-1}ac, b^{-1}, d^{-1}, a^{-1}\rbrace$ , \\ 
$( d^{-1} \otimes d^{-1}bd )\vert_{A^{7}} = \lbrace 1, a, c, b, d, a^{-1}da, d^{-1}, a^{-1}\rbrace$ , \\ 
$( d^{-1} \otimes c^{-1}ac )\vert_{A^{7}} = \lbrace 1, d, a, c, d^{-1}b^{-1}cb, d^{-1}, b^{-1}, a^{-1}\rbrace$ , \\ 
$( d^{-1} \otimes a^{-1}d^{-1}a )\vert_{A^{7}} = \lbrace 1, b^{-1}, d^{-1}, a^{-1}, c^{-1}a^{-1}c\rbrace$ , \\ 
$( d^{-1} \otimes d^{-1}b^{-1}d )\vert_{A^{7}} = \lbrace 1, a^{-1}, c^{-1}, b^{-1}, d^{-1}, a^{-1}d^{-1}a\rbrace$ , \\ 
$( d^{-1} \otimes b^{-1}c^{-1}b )\vert_{A^{7}} = \lbrace 1, a^{-1}, c^{-1}, b^{-1}, d^{-1}, d^{-1}b^{-1}d\rbrace$ , \\ 
$( d^{-1} \otimes c^{-1}a^{-1}c )\vert_{A^{7}} = \lbrace 1, b^{-1}, d^{-1}, a^{-1}, c^{-1}, b^{-1}c^{-1}b\rbrace$ , \\ 
$( d^{-1} \otimes a^{-1}bd )\vert_{A^{7}} = \lbrace 1, d^{-1}, a^{-1}, a, c, b, a^{-1}b, b^{-1}, d, c^{-1}da, c^{-1}\rbrace$ , \\ 
$( d^{-1} \otimes d^{-1}cb )\vert_{A^{7}} = \lbrace 1, a, c, b, d^{-1}, a^{-1}c, a^{-1}bd, a^{-1}, b^{-1}\rbrace$ , \\ 
$( d^{-1} \otimes c^{-1}da )\vert_{A^{7}} = \lbrace 1, b, a^{-1}d, a^{-1}, d^{-1}, d, a, a^{2}, d^{-1}b^{-1}ac, b^{-1}, c^{-1}\rbrace$ , \\ 
$( d^{-1} \otimes d^{-1}b^{-1}a )\vert_{A^{7}} = \lbrace 1, b^{-1}, d^{-1}, a^{-1}, a^{-1}c, a^{-1}d^{-1}c, c^{-1}\rbrace$ , \\ 
$( d^{-1} \otimes b^{-1}c^{-1}d )\vert_{A^{7}} = \lbrace 1, a^{-1}, c^{-1}, b^{-1}, d^{-1}, a, d^{-1}a, d^{-1}b^{-1}a\rbrace$ , \\ 
$( d^{-1} \otimes c^{-1}a^{-1}b )\vert_{A^{7}} = \lbrace 1, b^{-1}, d^{-1}, a^{-1}, b, d, c^{-1}, b^{-1}c^{-1}d\rbrace$ , \\ 
$( d^{-1} \otimes a^{-1}d^{-1}c )\vert_{A^{7}} = \lbrace 1, d^{-1}, a^{-1}, a, c, b, a^{-1}b, b^{-1}, a^{-2}, c^{-1}a^{-1}b\rbrace$ , \\ 
$( d^{-1} \otimes a^{-1}cb )\vert_{A^{7}} = \lbrace 1, a, c, d^{-1}b, d^{-1}, a^{-1}, b, a^{-1}b, c^{-1}bd, d\rbrace$ , \\ 
$( d^{-1} \otimes d^{-1}ac )\vert_{A^{7}} = \lbrace 1, d^{-1}, a^{-1}, a, c, a^{-1}c, a^{-1}cb, b, b^{-1}\rbrace$ , \\ 
$( d^{-1} \otimes c^{-1}bd )\vert_{A^{7}} = \lbrace 1, b, a^{-1}d, a^{-1}, b^{-1}, d, d^{-1}b^{-1}da, a, c^{-1}\rbrace$ , \\ 
$( d^{-1} \otimes b^{-1}c^{-1}a )\vert_{A^{7}} = \lbrace 1, d^{-1}, a^{-1}, b^{-1}, a^{-1}d^{-1}, a, d^{-1}a, d^{-1}b^{-1}c, c^{-1}\rbrace$ , \\ 
$( d^{-1} \otimes c^{-1}a^{-1}d )\vert_{A^{7}} = \lbrace 1, d^{-1}, a^{-1}, c^{-1}, b^{-1}, d^{-1}b^{-1}, b, d, b^{-1}c^{-1}a\rbrace$ , \\ 
$( d^{-1} \otimes a^{-1}d^{-1}b )\vert_{A^{7}} = \lbrace 1, d^{-1}, a^{-1}, a, c, b, a^{-1}b, b^{-1}, c^{-1}, c^{-1}a^{-1}d, a^{-2}\rbrace$ , \\ 
$( d^{-1} \otimes d^{-1}b^{-1}c )\vert_{A^{7}} = \lbrace 1, d^{-1}, a^{-1}, a^{-1}c, a^{-1}d^{-1}b, b^{-1}\rbrace$ , \\ 
$( d^{-1} \otimes a^{-1}d^{-1}bd )\vert_{A^{7}} = \lbrace 1, d^{-1}, a^{-1}, a, c, b, a^{-1}b, b^{-1}, d, c^{-1}a^{-1}da, c^{-1}\rbrace$ , \\ 
$( d^{-1} \otimes d^{-1}b^{-1}cb )\vert_{A^{7}} = \lbrace 1, a, c, b, d^{-1}, a^{-1}c, a^{-1}d^{-1}bd, a^{-1}, b^{-1}\rbrace$ , \\ 
$( d^{-1} \otimes b^{-1}c^{-1}ac )\vert_{A^{7}} = \lbrace 1, b^{-1}, a, d^{-1}a, c, d^{-1}b^{-1}cb, d^{-1}, a^{-1}, a^{-1}d^{-1}, c^{-1}\rbrace$ , \\ 
$( d^{-1} \otimes c^{-1}a^{-1}da )\vert_{A^{7}} = \lbrace 1, b, d, a, b^{-1}c^{-1}ac, b^{-1}, a^{-1}, c^{-1}, d^{-1}, d^{-1}b^{-1}\rbrace$ , \\ 
$( d^{-1} \otimes d^{-1}b^{-1}da )\vert_{A^{7}} = \lbrace 1, b^{-1}, d^{-1}, a, a^{-1}d^{-1}ac, a^{-1}c, a^{-1}, c^{-1}\rbrace$ , \\ 
$( d^{-1} \otimes b^{-1}c^{-1}bd )\vert_{A^{7}} = \lbrace 1, a^{-1}, c^{-1}, b^{-1}, b, a^{-1}d, d^{-1}, d, d^{-1}b^{-1}da, a, d^{-1}a\rbrace$ , \\ 
$( d^{-1} \otimes c^{-1}a^{-1}cb )\vert_{A^{7}} = \lbrace 1, d^{-1}, a^{-1}, a, c, b, a^{-1}b, b^{-1}, c^{-1}, b^{-1}c^{-1}bd, d\rbrace$ , \\ 
$( d^{-1} \otimes a^{-1}d^{-1}ac )\vert_{A^{7}} = \lbrace 1, d^{-1}, a^{-1}, a, c, c^{-1}a^{-1}cb, b, b^{-1}, a^{-1}b\rbrace$ , \\ 
$( d^{-1} \otimes a^{-1}d^{-1}cb )\vert_{A^{7}} = \lbrace 1, a, c, d^{-1}b, d^{-1}, a^{-1}, b, a^{-1}b, c^{-1}a^{-1}bd, a^{-2}\rbrace$ , \\ 
$( d^{-1} \otimes d^{-1}b^{-1}ac )\vert_{A^{7}} = \lbrace 1, d^{-1}, a^{-1}, a, c, a^{-1}c, a^{-1}d^{-1}cb, b^{-1}\rbrace$ , \\ 
$( d^{-1} \otimes b^{-1}c^{-1}da )\vert_{A^{7}} = \lbrace 1, d^{-1}, a^{-1}, d, a, b^{-1}, d^{-1}a, a^{2}, d^{-1}b^{-1}ac, c^{-1}\rbrace$ , \\ 
$( d^{-1} \otimes c^{-1}a^{-1}bd )\vert_{A^{7}} = \lbrace 1, b^{-1}, d^{-1}, a^{-1}, b, d, b^{-1}c^{-1}da, c^{-1}\rbrace$ , \\ 
$( d^{-1} \otimes a^{2} )\vert_{A^{7}} = \lbrace 1, d, a, c, b^{-1}, d^{-1}, a^{-1}c\rbrace$ , \\ 
$( d^{-1} \otimes a^{-2} )\vert_{A^{7}} = \lbrace 1, d^{-1}, a^{-1}, c^{-1}, b^{-1}\rbrace$ , \\

\bibliographystyle{amsplain}
\bibliography{ref}

\end{document}